\renewcommand*{\backref}[1]{}
\renewcommand*{\backrefalt}[4]{\quad \tiny
  \ifcase #1 (\textbf{NOT CITED.})%
  \or    (Cited on page~#2.)%
  \else   (Cited on pages~#2.)%
  \fi}
\newtheorem*{thm*}{Theorem}
\newtheorem*{thmA}{Theorem A}
\newtheorem*{thmB}{Theorem B}
\newtheorem*{thmC}{Theorem C}
\newtheorem*{thmD}{Theorem D}
\newtheorem{thm}{Theorem}[section]
\newtheorem{lemma}[thm]{Lemma}
\newtheorem{lema}[thm]{Lemma}
\newtheorem{prop}[thm]{Proposition}
\newcommand{\bi}{\begin{itemize}}
\newcommand{\ei}{\end{itemize}}
\theoremstyle{definition}
\theoremstyle{remark}
\newtheorem{obs}[thm]{Remark}
\newcommand{\lqqd}{\par\hfill {$\Box$} \vspace*{.05in}}
\newcommand{\D}{\mathbb{D}}
\newcommand{\HH}{\mathbb{H}}
\newcommand{\T}{\mathbb{T}}
\newcommand{\R}{\mathbb{R}}
\newcommand{\Z}{\mathbb{Z}}
\newcommand{\N}{\mathbb{N}}
\newcommand{\cC}{\mathcal{C}}
\newcommand{\F}{\mathcal{F}}
\newcommand{\cA}{\mathcal{A}}
\newcommand{\en}{\subset}
\newcommand{\cU}{\mathcal{U}}
\newcommand{\C}{\mathcal{C}}
\newcommand{\A}{\mathbb{A}}
\newcommand{\eps}{\varepsilon}
\newcommand{\az}[1]{#1}
\newcommand{\ro}[1]{#1}
\newcommand{\ve}[1]{#1}
\author[A. Passeggi]{Alejandro Passeggi}
\address{UdelaR, Facultad de Ciencias.}
\curraddr{Igua 4225 esq. Mataojo. Montevideo, Uruguay.}
\email{alepasseggi@gmail.com}
\author[R. Potrie]{Rafael Potrie}
\address{UdelaR, Facultad de Ciencias.}
\curraddr{Igua 4225 esq. Mataojo. Montevideo, Uruguay.}
\email{rpotrie@cmat.edu.uy}
\author[M. Sambarino]{Mart\'{\i}n Sambarino}
\address{UdelaR, Facultad de Ciencias.}
\curraddr{Igua 4225 esq. Mataojo. Montevideo, Uruguay.}
\email{samba@cmat.edu.uy}
\title[\tiny{Rotation sets and entropy on attracting annular continua}]{Rotation intervals and entropy on attracting annular continua}
\thanks{The authors were partially supported by CSIC Grupo 618 "Sistemas Din\'{a}micos". The second and third authors were partially supported by Palis-Balzan's project.}
\begin{document}

\maketitle

\begin{abstract}

We show that if $f$ is an annular homeomorphism admitting an attractor which is an irreducible annular continua with two different rotation numbers, then the entropy of $f$ is positive. Further, the entropy is shown to be associated to a $C^0$-robust \emph{rotational horseshoe}. On the other hand, we construct examples of annular homeomorphisms with such attractors so that the rotation interval is uniformly large but the entropy approaches zero as much as desired.

\smallskip

The developed techniques allow us to obtain similar results in the context of \emph{Birkhoff attractors}.

\end{abstract}

\section{Introduction}\label{SectionIntroduccion}

The study of annular dynamics goes back at least to Poinc\ro{a}r\'{e} who \az{used suitable (Poincar\'{e}) sections in the restricted three body problem to reduce the initial dynamics to an annulus.} 
This study turned out to be crucial in understanding the problem of stability (see \cite{CheicinerSurvey}) and gave rise to what nowadays is known as KAM theory \cite{surveyKAM}.

\smallskip

In this theory the considered dynamics are \az{volume preserving}, reflecting the \ro{conservation} laws \ro{of} the particular mechanical system. On the other hand, when physical problems involving non-conservative forces are \az{analised}, sometimes one is lead to study dissipative versions of the former class of systems (see for instance \cite{pendulos,entrchaos,nonlinearandchaos}). \ro{In this setting \emph{strange attractors} emerge as natural objects related to the underlying dynamics (for the definitions and basic examples see \cite{scholarmilnor})}. These were proved to exist by Birkhoff \cite{Birkhofforig}, who actually showed that they appear associated to the wide class of differentiable annular maps given by \emph{dissipative twist maps} (see \cite{lecalvez} for a comprehensive exposition). \ro{They were also found numerically by} R. Shaw, associated to the dynamics induced by differential equations such as the \emph{forced Van der Pol systems}
\cite{shawatt,nonlinearandchaos}. Since then, annular attractors have been studied both from the mathematical and physical point of view (see \cite{lecalvez,nonlinearandchaos}). 

\smallskip

In order to study this kind of attractors, there are two important dynamical invariants: the \emph{rotation set} and the \emph{topological entropy}. The former is given by averages of displacements of points in the attractor, information \ro{that is expressed by an interval of real numbers} (see below). The \ro{latter is} a quantity which measures how \emph{chaotic} the attractor is \footnote{A weaker version is the study of existence of positive \emph{Lyapunov exponents} -when the dynamics is smooth- which is implied by positivity of topological entropy.}. It is then natural to try to understand whether these two invariants are related and this motivates our article: we prove that a non-trivial rotation set implies positive topological entropy, and, in contrast, provide examples of systems which have uniformly large rotation intervals and arbitrary small topological entropy. 

\smallskip

From the pure mathematical point of view, this problem can be thought of as a version of the well known Shub's entropy conjecture for maps in the homotopy class of the identity (see \cite{ShubICM}): some geometric property of the dynamical system detectable from ``large scale'' imposes some lower bound on its complexity (e.g. topological entropy). In this case we focus on the rotation set of a dynamical system (see \cite{franksICM}), motivated by previous results providing a relationship between the \emph{shape and size} of this set and the topological entropy in some particular settings (degree one circle maps, torus homeomorphisms isotopic to the identity). Searching for similar relationships in the setting of dissipative annular homeomorphisms, we came into a rather surprising outcome: it is possible to show positive entropy assuming that the rotation set is non-trivial, yet, it is not possible to obtain lower bounds depending on the shape and size of the rotation set.

\ro{The following subsection presents} an account of the results in this paper to prepare for the precise statements.

\subsection{Presentation of the results}

The \emph{rotation set} is an invariant for dynamical systems which has been shown to contain \ro{essential} information of the dynamics when the underlying space has low dimension,
in particular in dimensions one and two.

Poincare's theory for orientation preserving homeomorphisms on the circle is the paradigmatic case: \ro{the rotation number}
turns out to be a number which provides a complete description of the underlying dynamics (see for example \cite[Chapter 11]{KatokHasselblatt}).
Still in dimension one, there is a natural generalisation of the \ro{rotation number} for degree one endomorphisms of the circle,
given by an (\az{possibly} trivial) interval called \emph{rotation set}. From this set crucial information of the dynamics can be deduced,
\ro{providing, for instance, criteria for the} existence of periodic orbits with certain relative displacements among other interesting properties
(see \cite{circulo} and references therein).

In dimension two, the dynamics of certain surface homeomorphisms homotopic to the identity is usually described by means of this topological invariant. In particular, \ro{for} the annulus $\A=\mathbb{S}^1\times\R$ and the two-torus $\T^2=\R^2/_{\Z^2}$ it can be said \ro{that} a theory has been built supported on the rotation set. In these contexts, for a dynamics $f$ given by a homeomorphisms in the homotopy class of the identity \ro{and any compact, forward invariant set $K$}, the rotation set associated to a lift $F:\R^2\to\R^2$ is defined as 

\small

$$\rho_K(F)=\left\{\lim_k \frac{\pi_1(F^{n_k}(x_k)-x_k)}{n_k}\ |\ x_k\in \ro{\pi^{-1}(K)},\ n_k \nearrow +\infty \right\} \ro{\subset \R}\ ,\mbox{ and  }$$
$$\rho_K(F)=\left\{\lim_k \frac{F^{n_k}(x_k)-x_k}{n_k}\ |\ x_k\in \ro{\pi^{-1}(K)},\ n_k \nearrow +\infty \right\} \ro{\subset \R^2} ,$$

\normalsize

\noindent respectively, \ro{where $\pi$ denotes for both cases the quotient map} and $\pi_1$ is the projection over the first coordinate \ro{in $\R^2$}. \ro{In case $K=\T^2$ one writes $\rho(F)$ instead $\rho_K(F)$.}

\smallskip

When $K\subset\A$ is also connected, the shape of this set is given by an (possibly degenerated) interval in the annular case. For the toral case
the foundational result by Misiurewicz and Zieman \cite{MZ}, shows that $\rho(F)$ is a (possibly degenerated) compact and convex set.
From these facts, there exists a vast list of interesting results, where assuming possible geometries for the rotation set, descriptions of the underlying
dynamics are obtained. We refer the interested reader to \cite{beguinsurveytoro,PassThesis} and references therein for a more
complete\footnote{These surveys are not completely updated as there has been some fast progress in the recent years.} account on this theory.

\smallskip

\ro{The} \emph{topological entropy} measures how chaotic a prescribed dynamical system is. It measures the rate of exponential growth of different orbits in a dynamical system when observed at a given (arbitrarily small) scale. We shall not provide a \ro{formal} definition of topological entropy here (see e.g. \cite[Chapter 3]{KatokHasselblatt}). The precise \ro{formulation} of this notion is rather technical, but it is unimportant to our paper as our proof of positivity of topological entropy relies on obtaining certain dynamical configurations which are interesting by themselves (and which are known to imply positive topological entropy).

\smallskip

When the dimension of the rotation set equals the dimension of the space where the dynamics acts, there exists a relation between the \emph{geometry and arithmetics} of the rotation set and the topological entropy of the system. For instance, for degree one maps on the circle, the topological entropy is bounded from below by an explicit (and optimal) function of the extremal points of the rotation set as shown in \cite{circulo}. In the toral case, the quantity considered for such a lower bound is less explicit and\ro{,} as far as the authors are aware, not optimal. See \cite{llibre,kwapitz,Forcing}.

\smallskip

In the annulus $\A=\mathbb{S}^1\times\R$, a large rotation set is not necessarily associated with large entropy. Integrable twist map, e.g. maps of the form $(x,y)\to (x+ r(y) \ ( mod \ 1),y),$ preserving a foliation by essential circles\ro{,} have zero entropy but may have rotation sets of arbitrarily large size. One can look at the rotation set restricted to certain invariant regions of the annulus and hope to draw better conclusions.

For this \ro{purpose}, the class of invariant sets which turns \ro{out to be} interesting to observe are the essential annular \ro{continua}:
\ro{a continuum $K\subset \A$ is called an \emph{essential annular continuum} if $\A\setminus K$ has exactly two connected components and both of
them are unbounded (and hence $K$ must disconnect both ends of $\A$)}. These \ro{sets are} natural objects in surface dynamics which \az{model}
for instance the mentioned attractors, and have been \ro{the focus} of several works in the field. The topology \ro{of essential annular continua}
can be very simple, as for the circle or the closed annulus itself, and very complex, as it is the case of indecomposable annular continua, for instance, the pseudo-circle.

\smallskip

We mentioned above that for the case where $K\subset\A$ is a closed essential annulus, there is no relation between the length of the rotation interval and the topological entropy. As a next step, one can look at those annular continua containing no essential annulus. For this class of continua, there exists an interesting example by Walker \cite{Walker}, in which an invariant annular continua having empty interior $K$ is constructed having zero entropy and arbitrary large rotation set. Nevertheless, this continuum contains an essential circle inside, that is, $K$ is not irreducible. Irreducible annular continua (see Section \ref{ss.continua}), often called \emph{circloids}, with non-trivial rotation sets are known as interesting examples, and it is possible to construct them so that they are robust in the $C^0$ topology (see \cite{boronski,lecalvez}). Further, as we mentioned before, this kind of dynamics occur as global attractors of dissipative twist maps given by the so called \emph{Birkhoff attractors} \
\cite{lecalvez}, and \ve{are the canonical model for the \emph{strange attractors} of annular diffeomorphisms}.

\smallskip

In this article we show the following complementary facts. For an orientation preserving homeomorphisms $f$ and an attracting invariant circloid $\C$:

\ve{
\begin{itemize}

\item We show in Theorem A that if $\cC$ has a non-trivial rotation set, then some power of $f$ has a topological
horseshoe with a non-trivial rotation set (see Section \ref{s.precise} for the definition of \emph{rotational horseshoe}). Moreover,
this situation is $C^0$-robust, that is, any homeomorphism $C^0$-close to $f$ has a \emph{rotational horseshoe}.

\item In Theorem B we show that there is no relation between the entropy and the length of the rotation set, so the power of $f$ needed in order to find the horseshoe in Theorem A can be arbitrary large for a prescribed rotation set.

\end{itemize}

\smallskip

The first result answers positively (assuming the circloid is a global attractor) a folklore problem about the relation between entropy and rotation
intervals on circloids (see for instance \cite{kororealizacion} and Question 3 in \cite{boronski}). Moreover, the result shows that these kind of
attractors are associated to $C^0$-robust topological horseshoes with rotational information (see the definition below).
The second result is quite surprising: one could expect that the size of the rotation set could impose a lower bound on the topological entropy as it is
the case for degree one maps of the circle.}

\smallskip

The techniques in the proofs allow us to deal with \ro{the related} class of Birkhoff attractors (see definition below).

\smallskip

Next, we give precise statement of the results.

\subsection{Precise statements}\label{s.precise}

In what follows we list the obtained results. Recall that $\A$ stands for the infinite annulus $\A=\mathbb{S}^1\times \R.$ We denote by $\textrm{Homeo}_+(\A)$
the set of homeomorphisms of the annulus which preserve orientation. 

\ve{Given a homeomorphism $f:\mathcal{X}\to\mathcal{X}$
and a partition by $m>1$ elements $R_0,\dots,R_{m-1}$ of $\mathcal{X}$, the \emph{itinerary} function $\xi:\mathcal{X}\to\{0,\dots,m-1\}^{\Z}:=\Sigma_m$
is defined by $\xi(x)(j)=k$ iff $f^j(x)\in R_k$ for every $j\in\Z$.

We say that a compact invariant set $\Lambda\subset\A$ of $f\in\textrm{Homeo}_+(\A)$ is a \emph{rotational horseshoe} if it admits a finite partition
$\mathcal{P}=\{R_0,\dots,R_{m-1}\}$ with $R_i$ open sets of $\Lambda$ so that

\begin{enumerate}

\item the itinerary $\xi$ defines a semiconjugacy between $f|_{\Lambda}$ and the full-shift $\sigma:\Sigma_m\to\Sigma_m$, that is $\xi\circ f=\sigma\circ\xi$ with $\xi$ continuous and onto;

\item for any lift $F$ of $f$, there exist a positive constant $\kappa$ and $m$ vectors $v_0,\dots,v_{m-1}\in \Z\times\{0\}$ so that
$$\|(F^n(x)-x)-\sum_{i=0}^{n}v_{\xi(x)} \|<\kappa\mbox{ for every }x\in\pi^{-1}(\Lambda),n\in\N.$$

\end{enumerate}

Clearly, the existence of a rotational horseshoe for a map implies positive topological entropy larger
than $\mathrm{log}(m)\geq\mathrm{log}(2)$. Other interesting implications can be obtained; for instance, the
realisation\footnote{A periodic point $x$ \emph{realises} a rational rotation vector $\frac{p}{q}$ (with $p \in \mathbb{Z}^2$ and $q \in \mathbb{Z}_{>0}$ if there is a lift $\tilde x$ of $x$ so that $F^q(\tilde x)= \tilde x + p$.} of every rational rotation vector in $\rho_{\Lambda}(F)$.

\begin{thmA}\label{teo.positivo}

Assume that $f\in\textrm{Homeo}_+(\A)$ has a global attractor $\C$ given by a circloid for which $\rho_{\C}(F)$ is a non-trivial interval,
where $F$ is a lift of $f$. Then, there exists $n_0$ so that $f^{n_0}$ has a rotational horseshoe $\Lambda$ contained in $\C$.
Moreover, there exists a $C^0$-neighborhood $\mathcal{N}$ in $\textrm{Homeo}_+(\A)$ of $f$ so that for every $g\in\mathcal{N}$ we have a rotational horseshoe $\Lambda_g$ for $g^{n_0}$.
In particular, $h_{\textrm{top}}(g)>\varepsilon_0$ for all $g\in\mathcal{N}$ and some positive constant $\varepsilon_0$.

\end{thmA}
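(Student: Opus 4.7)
The plan is to derive from the non-trivial rotation interval two periodic orbits inside $\C$ with distinct rotation numbers, and then exploit the irreducibility of the circloid to assemble a topological Markov structure on $\C$ conjugate to a full shift.

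First, using that $\rho_{\C}(F) = [\alpha,\beta]$ with $\alpha < \beta$, I would show that every rational in the interior is realised by a periodic orbit of $f$ contained in $\C$. For an attracting circloid this is a Barge--Gillette / Franks type statement for annular continua, obtained by constructing suitable Birkhoff averages of recurrent points inside $\C$ and using irreducibility to exclude degenerate realisations. Choosing two such rationals with common denominator $n_0$, say $p_1/n_0$ and $p_2/n_0$ with $p_1 < p_2$, one obtains periodic orbits $\mathcal{O}_1, \mathcal{O}_2 \subset \C$ whose lifts satisfy $F^{n_0}(\tilde p_i) = \tilde p_i + v_i$ with $v_i = (p_i,0) \in \Z \times \{0\}$. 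After replacing $f$ by $f^{n_0}$, I have two distinct lifts of the iterate, each fixing a point of $\C$, with translation vectors that differ in the horizontal direction.

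The heart of the argument is then to build, inside $\C$, two open topological rectangles $R_0, R_1 \subset \C$ whose $F^{n_0}$-iterates cross each other in a Markov fashion with prescribed horizontal displacements $v_0, v_1$. Since $\C$ is an irreducible annular continuum, two disjoint periodic orbits with different rotation numbers must be linked at the universal-cover level in a rotational sense; this should yield, via an equivariant Brouwer--Le Calvez style argument applied to the prime-ends compactifications of the two complementary domains of $\C$, essential cross-cuts joining accessible points of $\mathcal{O}_1$ and $\mathcal{O}_2$. Iterating and shortening these cross-cuts one produces open sets $R_0, R_1$ satisfying the topological covering relations $F^{n_0}(R_i) - v_i \supset R_0 \cup R_1$, from which a standard shadowing/Markov argument produces the invariant set $\Lambda = \bigcap_{n \in \Z} f^{-n n_0}(R_0 \cup R_1)$ together with the semiconjugacy $\xi \colon \Lambda \to \Sigma_2$ intertwining $f^{n_0}$ with the full shift. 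Condition~(2) of the rotational horseshoe definition is then automatic, since by construction the itinerary records exactly the displacement vectors $v_{\xi(x)(j)}$, giving the bounded deviation estimate with $\kappa$ the diameter of a fundamental domain containing $R_0 \cup R_1$.

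Finally, $C^0$-robustness is immediate: the covering relations $F^{n_0}(R_i) - v_i \supset \overline{R_0 \cup R_1}$ are open conditions on the dynamics, so they persist for every $g$ in a sufficiently small $C^0$-neighborhood $\mathcal{N}$ of $f$, producing $\Lambda_g$ with the same symbolic dynamics and $h_{\mathrm{top}}(g) \geq \tfrac{\log 2}{n_0} =: \varepsilon_0$. \textbf{Main obstacle.} The delicate step is the construction of the crossing rectangles: $\C$ may have empty interior, be a pseudo-circle, or otherwise be wildly embedded, and no smooth or PL structure is available. The argument must be purely topological, and the likely tool is equivariant Brouwer theory on the universal cover combined with prime-ends dynamics on each complementary domain of $\C$; one has to translate the coexistence of two distinct rotation numbers on an irreducible annular continuum into actual topological forcing of linked cross-cuts, and make sure the resulting displacements lie in $\Z \times \{0\}$ rather than in $\Z^2$ (which uses essentially the annular, rather than toral, nature of the ambient space).
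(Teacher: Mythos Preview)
Your outline has the right high-level shape (realise two periodic orbits with different rotation numbers, build a Markov picture, invoke Kennedy--Yorke and $C^0$-openness), but the core step---constructing the crossing rectangles---is not workable as you describe it, and the obstacle you flag at the end is exactly where the argument breaks.

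You propose to find open rectangles $R_0,R_1\subset\C$ and cross-cuts joining accessible points of the two periodic orbits via prime-end/Brouwer methods. The difficulty is not merely that $\C$ may be wildly embedded: if $\C$ is a cofrontier (which is the generic situation here, since a circloid carrying two rotation numbers is non-compactly generated), it has empty interior, so there are no open rectangles inside $\C$ at all, and the covering relations $F^{n_0}(R_i)-v_i\supset R_0\cup R_1$ you write down cannot even be formulated there. Likewise, the periodic points you produce need not be accessible from either complementary domain, so ``cross-cuts joining accessible points of $\mathcal O_1$ and $\mathcal O_2$'' is not something you can assume exists. The prime-end dynamics on each side sees a single rotation number, not two, so by itself it does not force the kind of linking you need.

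The paper avoids this by working not inside $\C$ but in an ambient closed annulus $\mathcal A\supset\C$ with $f(\mathcal A)\subset\mathrm{int}(\mathcal A)$. The key technical result (their Theorem~3.1) produces, for each periodic point $p\in\partial\C$, an inessential forward-invariant continuum $C_p\ni p$ that reaches $\partial\mathcal A$; it is obtained by $C^0$-approximating $f$ by maps for which $p$ is a hyperbolic saddle, controlling the stable branches using a ``blocking arc'' through a second periodic point with different rotation, and passing to a Hausdorff limit. A prime-end argument (their Proposition~3.5, which relies on the topology of non-compactly generated circloids) then shows that at most one rotation number can be ``dynamically continuum accessible'' from each side; hence among four periodic points with distinct rotation numbers, at least two have continua $C_{p_i}$ meeting \emph{both} boundary circles of $\mathcal A$. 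These two continua play the role of the vertical sides $D_0,D_1$ of a rectangle $R\subset\mathcal A$, and since they rotate at different speeds, some iterate of $R$ stretches across $R$ and a translate $R+j$ in the universal cover, giving the Kennedy--Yorke horseshoe and the $C^0$-robustness. So the rectangle lives in $\mathcal A$, not in $\C$, and its sides are dynamically constructed ``stable-like'' continua rather than cross-cuts coming from prime-end theory.
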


This result and Theorem C below can be derived from a more general statement given by Theorem \ref{t.generalA} in Section \ref{ss.teoC}.}





The complementary result is given by the following.

\begin{thmB}\label{teo.ejemplos}

Given $\varepsilon>0$ there exists a smooth \ro{diffeomorphism} $f\in\textrm{Homeo}_+(\A)$ admitting a global attractor $\cC$ which is a circloid, such that $\rho_{\cC}(F) \supset [0,1]$ for some lift $F$ of $f$, while $h_{\textrm{top}}(f) < \varepsilon$.

\end{thmB}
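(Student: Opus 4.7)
The plan is to construct $f$ as a controlled perturbation of an integrable contraction, so that the full rotation interval $[0,1]$ is produced by a pair of fixed points while the attractor is squeezed into a circloid without introducing exponential complexity.

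First, I would set up an integrable template. Fix a smooth strictly increasing $\omega:\R\to\R$ with $\omega(0)=0$, $\omega(1)=1$, and a smooth diffeomorphism $\psi:\R\to\R$ with exactly two fixed points at $y=0,1$ which attracts all of $\R$ into $[0,1]$. The map $f_0(x,y)=(x+\omega(y),\psi(y))$ is then a smooth diffeomorphism of $\A$ with zero entropy, global attractor $\A_{[0,1]}=\mathbb{S}^1\times[0,1]$, and rotation set $\rho_{\A_{[0,1]}}(F_0)=[0,1]$ realised by the invariant circles $\{y=0\}$ and $\{y=1\}$. The attractor is not irreducible, so this is not yet the desired example.

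Next, I would post-compose $f_0$ with a smooth diffeomorphism $h$ of $\A$, equal to the identity outside $\A_{[0,1]}$ and fixing the points $(0,0)$ and $(0,1)$, which crushes $\A_{[0,1]}$ onto a small neighborhood of a smoothly embedded essential curve $\gamma$ that passes through these two points and meanders densely between them. Set $f:=h\circ f_0$. The points $(0,0)$ and $(0,1)$ remain fixed under $f$ with rotation numbers $0$ and $1$ respectively, and since the rotation set on a connected essential annular continuum is an interval, $\rho_{\mathcal{C}}(F)\supset[0,1]$. The global attractor $\mathcal{C}=\bigcap_{n\geq 0}f^n(\A_{[0,1]})$ lies in arbitrarily small neighborhoods of $\gamma$ (by strengthening the vertical crush), and by choosing $\gamma$ sufficiently wild it contains no proper essential subcontinuum, so is a circloid.

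Finally, to estimate entropy, observe that $f_0$ has zero entropy and the perturbation $h$ acts nontrivially only in the vertical direction. Thus the horizontal (circle-direction) complexity of $f$-orbits inherits from $f_0$, which is integrable and hence has subexponential separation. A Bowen-ball count on $\mathcal{C}$, controlling how $h$ interacts with the horizontal spreading, should yield $h_{\mathrm{top}}(f)<\varepsilon$ provided the horizontal derivative of $h$ is kept small (which can be arranged by localising $h$ away from the two fixed points and keeping it $C^1$-close to the identity in the $x$-coordinate). The main obstacle is the tension between irreducibility, which typically requires some folding or mixing to prevent smaller essential subcontinua from being invariant, and low entropy, which forbids folding in the circle direction because that would produce horseshoes. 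The resolution lies in confining all the crushing to the vertical fibers of the annulus: this suffices to eliminate essential subcontinua, yet does not generate exponential complexity in the circle direction where the rotational information, and any potential entropy, is concentrated.
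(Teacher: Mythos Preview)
Your construction has a fatal gap at the second step. You ask for a diffeomorphism $h$ of $\A$ equal to the identity outside $\A_{[0,1]}$ that ``crushes $\A_{[0,1]}$ onto a small neighbourhood of a curve''. But any diffeomorphism of $\A$ that is the identity on $\A\setminus\A_{[0,1]}$ is, by continuity, the identity on $\partial\A_{[0,1]}$, and being a bijection it must satisfy $h(\A_{[0,1]})=\A_{[0,1]}$. Hence $f(\A_{[0,1]})=h(f_0(\A_{[0,1]}))=h(\A_{[0,1]})=\A_{[0,1]}$, so the global attractor of $f$ is still the full annulus $\A_{[0,1]}$, not a circloid. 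No amount of ``wildness'' of $\gamma$ changes this: a single post-composition by a diffeomorphism cannot shrink the attractor.

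Even setting this aside, the irreducibility claim is where the real difficulty lies, and you do not address it. A circloid carrying two distinct rotation numbers is necessarily \emph{non compactly generated} (this is Theorem~\ref{teo-circindec} in the paper), so in particular it cannot be a smooth essential curve nor lie in an arbitrarily small neighbourhood of one. Producing such an object while keeping the map $C^1$-close to an integrable one is delicate: the paper does it by first creating saddle and saddle-node fixed points on $C_0$ and $C_1$, then invoking the $C^1$ connecting lemma for pseudo-orbits (Arnaud--Bonatti--Crovisier) to force transverse heteroclinic connections between them, and finally breaking the invariant circles with localised bumps. The circloid then appears as the closure of an unstable manifold, and its irreducibility is verified via the $\lambda$-lemma and an explicit description of the basins $\mathcal{U}^\pm$. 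The entropy bound comes for free because every perturbation is $C^1$-small and the integrable twist has polynomial derivative growth. Your ``vertical crush'' heuristic does not substitute for any of this machinery; in particular there is no mechanism in your outline that forces $\partial\mathcal{U}^+=\partial\mathcal{U}^-$, which is what makes the attractor a circloid.
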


\ve{As we mentioned above, this implies that for a prescribed positive length of the rotation interval, the minimum positive integer $n_0$ as in theorem A (for which $f^{n_0}$ has a rotational horseshoe)
could be arbitrary large.}


Recall that \ro{given a riemannian manifold $M$} a diffeomorphism $f:M\to M$ is said to be \emph{dissipative} whenever there exists $\eps>0$ such that  $|\textrm{det}(Df_x)|<1-\eps$
for every $x\in M$. Further, recall that a diffeomorphism $f:\A\to\A$ is said to be a \emph{twist map}  if for some lift $F$ of $f$ there is $\eps>0$ so that
$DF_x((0,1))=(a(x),b(x))$ with $ \eps< a(x) < \frac{1}{\eps}$.

Given a dissipative twist map of the annulus which maps an essential closed annulus into its interior one can associate a global attractor $\Lambda$, given by the intersection of the
iterates of the annulus. This is an annular continua with empty interior and contains a unique circloid $\C$ which is the so called \textrm{Birkhoff attractor} (see \cite{lecalvez}).
Notice however, that this situation differs from the situation in Theorem A, as the Birkhoff attractor $\C$ might not be an attractor in the usual sense. In other words, it could be
the case that $\Lambda\neq\C$. \ve{In this setting, we show the following result.

\begin{thmC}\label{entropiaenatractorBir}

Assume that $f:\A\to\A$ is an orientation preserving diffeomorphism, which is dissipative, verifies the twist condition and
$f(\mathcal{A})\subset\mathcal{A}$ for some compact essential annulus $\mathcal{A}\subset\A$. Further, assume that $\rho_{\mathcal{C}}(F)$
is a non-trivial interval, were $\C$ is the Birkhoff attractor of $f$. Then, there exists $n_0$ so that $f^{n_0}$ has a rotational horseshoe $\Lambda$.
Moreover, there exists a $C^0$-neighborhood $\mathcal{N}$ of $f$ in $\textrm{Homeo}_+(\A)$ so that for every $g\in\mathcal{N}$ we have a rotational horseshoe $\Lambda_g$ for $g^{n_0}$.
In particular, $h_{\textrm{top}}(g)>\varepsilon_0$ for all $g\in\mathcal{N}$ and some positive constant $\varepsilon_0$.

\end{thmC}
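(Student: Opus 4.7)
My strategy is to reduce Theorem C to the general statement labelled Theorem \ref{t.generalA} that the authors announce as a common parent of Theorems A and C. The conceptual gap to bridge is that Theorem A's hypothesis asks $\C$ to be a global attractor of $f$, whereas in Theorem C the Birkhoff attractor $\C$ is only an attractor within the \emph{global attractor} $\Lambda=\bigcap_{n\geq 0} f^n(\mathcal{A})$, and in general $\Lambda$ strictly contains $\C$. So I would begin by isolating the abstract ``attracting-like'' property of $\C$ that the proof of Theorem A actually uses, and verifying this property for the Birkhoff attractor via the dissipative twist hypotheses.

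Concretely, I would first recall Le Calvez's description of the Birkhoff attractor: $\Lambda$ is an essential annular continuum with empty interior, and $\C$ is the unique circloid inside $\Lambda$, characterised as the frontier of the two unbounded connected components $U^+, U^-$ of $\A\setminus\Lambda$ (after identifying their upper and lower boundary curves). In particular $\C$ is \emph{accessible from both ends of $\A$}: there exist sequences of points in $U^\pm$ whose forward orbits enter and remain in any prescribed neighborhood of $\C$. The twist condition, combined with dissipation, furthermore forces $\Lambda\setminus\C$ to consist of points wandering in the transverse direction, so that the non-trivial part of the rotation set of $f|_\Lambda$ is carried entirely by $\C$. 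These are exactly the features of $\C$ used in the horseshoe construction for Theorem A: one builds pseudo-orbits realising two extreme rotation numbers of $\rho_{\C}(F)$ and concatenates them by Birkhoff-type connections crossing $\C$, which only requires control of the dynamics in a neighborhood of $\C$ together with this two-sided accessibility. Thus, once this verification is in place, applying Theorem \ref{t.generalA} yields a rotational horseshoe for some power $f^{n_0}$.

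For the $C^0$-robustness part, notice that the constructed rotational horseshoe, as formulated in Section \ref{s.precise}, is a purely topological object: a Markov-type partition of a compact invariant set whose boxes cross each other in a way that tracks specified integer horizontal vectors. Such cross-intersection patterns persist under $C^0$-small perturbations of $f^{n_0}$, even when the perturbation destroys the twist or the dissipation. Therefore, for every $g$ in a suitable $C^0$-neighborhood $\mathcal{N}$ of $f$ in $\textrm{Homeo}_+(\A)$, we obtain a rotational horseshoe $\Lambda_g$ for $g^{n_0}$ and consequently the uniform lower bound $h_{\textrm{top}}(g)\geq \log(m)/n_0 =: \varepsilon_0 > 0$.

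The main obstacle, as I anticipate, is precisely the verification that the Birkhoff attractor fits the abstract framework of Theorem \ref{t.generalA}. The delicate point is to confirm that the existence of orbits in $\A\setminus\Lambda$ that converge to $\Lambda$ (but not necessarily to $\C$) does not obstruct the construction of the rotational connections, and symmetrically that the extra piece $\Lambda\setminus\C$ does not introduce spurious rotational behavior indistinguishable from that of $\C$. Both issues should be resolvable using the twist-plus-dissipation dichotomy that keeps $\Lambda\setminus\C$ transversally wandering and confines all rotational phenomena to $\C$, but making this precise at the level of prime-end structure and potential indecomposability of $\C$ is the step where I expect the bulk of the technical work to lie.
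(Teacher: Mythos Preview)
Your overall strategy---reducing Theorem C to Theorem \ref{t.generalA}---is exactly what the paper does, and you correctly identify the key topological fact that $\C=\overline{U^+}\cap\overline{U^-}$ (using that dissipation forces $K_{\mathcal A}$ to have empty interior). However, there is a genuine gap and a misreading of the mechanism.

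The gap is that you never state or verify the actual hypotheses of Theorem \ref{t.generalA}: one needs \emph{four periodic points} $p_1,\dots,p_4$ in $\mathcal A$ with pairwise distinct rotation numbers, each accumulated by $\bigcup_{n\in\N}f^n(\partial^+\mathcal A)$ and by $\bigcup_{n\in\N}f^n(\partial^-\mathcal A)$. The paper's argument is: since $\C=\overline{U^+}\cap\overline{U^-}$, every point of $\C$ is accumulated from both sides by the forward iterates of $\partial^\pm\mathcal A$; then the realisation results of \cite{kororealizacion,BG} produce periodic points in $\C$ for every rational in $\rho_\C(F)$, in particular four with distinct rotation numbers. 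You never invoke a realisation theorem, and without it the reduction does not go through.

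The misreading concerns what you think happens \emph{inside} the proof of Theorem A / Theorem \ref{t.generalA}. It is not a pseudo-orbit or ``Birkhoff-type connection'' argument. The engine is Theorem \ref{t.continuum} (and its variant Theorem \ref{t.continuum2}): for each such periodic point $p$ one constructs a forward-invariant inessential continuum $C_p\ni p$ reaching $\partial\mathcal A$, obtained as a Hausdorff limit of pieces of stable manifolds for $C^0$-approximating homeomorphisms with $p$ hyperbolic. Prime-end considerations (Proposition \ref{prop-primeend}) then guarantee that for at least two of the four points these continua meet \emph{both} components $\cU^\pm$, and Lemmas \ref{l.rotationentropy} and \ref{l.gromov} yield the rotational horseshoe and its $C^0$-robustness. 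Consequently, your worries about $\Lambda\setminus\C$ introducing ``spurious rotational behaviour'' or obstructing connections are red herrings: once the four periodic points sit in $\C$ and are two-sidedly accumulated, Theorem \ref{t.generalA} applies as a black box and no further analysis of $\Lambda\setminus\C$ is needed. The twist condition plays no role in the horseshoe construction itself; it is used only (via Le Calvez's theory) to ensure the Birkhoff attractor is well-defined as the unique invariant circloid.
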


}



We finish adapting the proof of Theorem B to show that the topological entropy and the length of rotation intervals are again not related for Birkhoff attractors. The difference with Theorem B is that although in this case we have dissipation, we can not ensure that the global attracting set coincides with the unique invariant circloid it contains.

\begin{thmD}

For every $\varepsilon>0$ there exists a dissipative twist smooth diffeomorphisms $f:\A\to\A$ having an Birkhoff attractor $\C$ with $\rho_{\C}(F)\supset[0,1]$ and $h_{\textrm{top}}(f|_{\mathcal{C}})<\varepsilon$.

\end{thmD}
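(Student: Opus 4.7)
The strategy is to adapt the construction from the proof of Theorem B. Theorem B provides, for any $\varepsilon>0$, a smooth diffeomorphism whose global attractor is a circloid with rotation set containing $[0,1]$ and total topological entropy less than $\varepsilon$. The additional requirements in the present statement are that (i) the map must be a dissipative twist diffeomorphism, and (ii) the rotation and entropy control only concerns the Birkhoff attractor, which according to the comment preceding Theorem D may be strictly smaller than the global attractor.

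First, I would extract from the proof of Theorem B a smooth degree-one circle endomorphism $g:\mathbb{S}^1\to\mathbb{S}^1$ with rotation set containing $[0,1]$ and topological entropy less than $\varepsilon/2$. Such $g$ is necessarily non-invertible, since a circle homeomorphism has a single rotation number; the combinatorial heart of Theorem B's construction is precisely to produce a $g$ with a small entropy despite a large rotation set.

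Second, I would realize the dynamics of $g$ as a factor of a dissipative twist diffeomorphism via a Smale--Williams-type construction over $g$, that is, an $\mathbb{S}^1$-bundle with strong fiber-wise contraction. A concrete model is
\[
f(x,y)=\bigl(g(x)+\alpha y,\ \lambda y+c(x)\bigr),
\]
with $\alpha>0$ small (giving the twist, since $\partial_y f^1=\alpha>0$ uniformly), $\lambda\in(0,1)$ small (giving dissipation, since $\det Df=\lambda g'(x)-\alpha c'(x)$ can be arranged to lie uniformly in $(0,1-\eta)$), and $c:\mathbb{S}^1\to\R$ a smooth function whose derivative has a suitable sign and magnitude wherever $g'$ is small, so that the Jacobian of $f$ never vanishes and $f$ is a global diffeomorphism. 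For $M$ large, $\mathcal{A}=\mathbb{S}^1\times[-M,M]$ is an essential compact annulus mapped into its interior.

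Third, I would identify the Birkhoff attractor $\C$ of $f$. For $\alpha$ small, $f$ is a small perturbation of the degenerate fiber-wise contraction $(x,y)\mapsto(g(x),\lambda y+c(x))$, whose maximal invariant set is an embedded inverse-limit realization of $(\mathbb{S}^1,g)$. A persistence argument identifies the Birkhoff attractor of $f$ with a small perturbation of this inverse limit, sitting as an essential circloid inside $\Lambda=\bigcap_n f^n(\mathcal{A})$. Because the inverse-limit construction preserves both topological entropy and rotation set, one concludes $\rho_\C(F)\supset[0,1]$ and $h_{\textrm{top}}(f|_\C)=h_{\textrm{top}}(g)<\varepsilon$.

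The main obstacle is the rigorous identification of $\C$ with the inverse-limit circloid, rather than with the possibly larger global attractor $\Lambda$. The preamble to Theorem D explicitly points out that $\Lambda$ and $\C$ may differ; to handle this, I would invoke the characterization of $\C$ as the unique essential circloid in $\Lambda$ and verify that the inverse-limit circloid constructed above is that unique one. A secondary technical difficulty is ensuring that any extra part of $\Lambda\setminus\C$ does not inject new asymptotic rotation numbers or additional entropy when we restrict the count to $\C$; here the topological analysis of Birkhoff attractors due to Le Calvez, cited earlier in the paper, is the natural tool.
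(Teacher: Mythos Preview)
Your approach has a fatal gap at the very first step. You propose to extract from the proof of Theorem~B a degree-one circle endomorphism $g:\mathbb{S}^1\to\mathbb{S}^1$ with rotation set containing $[0,1]$ and topological entropy less than $\varepsilon/2$. No such circle map exists. As the paper recalls in its introduction (citing \cite{circulo}), for degree-one endomorphisms of the circle the topological entropy is bounded below by an explicit positive function of the endpoints of the rotation interval; a rotation interval of length one forces entropy bounded away from zero. This is precisely the phenomenon that makes Theorem~B surprising: the annular situation behaves \emph{differently} from the one-dimensional one. The proof of Theorem~B nowhere produces or uses a circle map; its low-entropy mechanism is that the diffeomorphism stays $C^1$-close to the integrable twist $\tau$, whose derivative growth is subexponential (Remark~\ref{rem-twist} and Proposition~\ref{p.continuityentropy}). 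So the ``combinatorial heart'' you refer to does not exist, and your Smale--Williams/inverse-limit construction has nothing to stand on.

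The paper's actual argument stays within the perturbative framework of Theorem~B. One modifies the second perturbation $f_2$ so that, in addition to the boundary saddles, there are two further hyperbolic periodic points $q,q'$ strictly inside $(C_0,C_1)$, homoclinically related and with rotation numbers arbitrarily close to $0$ and $1$; this is again arranged via Theorem~\ref{connecting2}. The resulting homoclinic class $H=H(q,q')$ has rotation set containing $[\delta,1-\delta]$. One then composes $f_2$ with a sequence of maps $h_n(x,y)=(x,\hat h_n(y))$, $C^1$-close to the identity, that contract slightly in the vertical direction on $[C_{-1},C_2]$; this makes $g_n=h_n\circ f_2$ genuinely dissipative while keeping it $C^1$-close to $\tau$ (hence twist, and with small entropy). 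The continuation $H_n$ of the homoclinic class persists, and the closure of $W^u(q,g_n)$ is an invariant circloid, which by uniqueness must be the Birkhoff attractor $\mathcal{C}_n$; thus $H_n\subset\mathcal{C}_n$ and $\rho_{\mathcal{C}_n}(G_n)\supset[\delta,1-\delta]$. Passing to $g_n^3$ gives a rotation interval containing $[0,1]$ with entropy still below $\varepsilon$.
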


\begin{obs}

There is a certain analogy between Birkhoff attractors and \emph{regions of instability} of conservative annulus homeomorphisms (see for example \cite{frankslecalvez}).
Recall that an \emph{instability region} $R$ for an area-preserving annular homeomorphism is an invariant \ro{compact connected set whose boudary is given by two disjoint essential annular continua $C_-$ and $C_+$,} having a point with $\alpha$-limit in $C_-$ and
$\omega$-limit in $C_+$, and a point with $\omega$-limit in $C_-$ and $\alpha$-limit in $C_+$. In a recent article P. Le Calvez and F. Tal \cite{Forcing} (see also \cite{frankshandel}) have shown that whenever an instability region has a non-trivial interval as rotation set, then the map has positive entropy. In the process of proving Theorem B and D we must construct an \az{instability} region (of a smooth twist map) with rotation set containing $[0,1]$ and arbitrarily small entropy, showing that in this context again, there is no relation between the size of the rotation interval and the topological entropy of the map.
\end{obs}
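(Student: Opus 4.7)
The plan is to construct, for every prescribed $\varepsilon>0$, a smooth area-preserving twist diffeomorphism $f_\varepsilon$ of $\A$ together with an instability region $R_\varepsilon$ satisfying $\rho_{R_\varepsilon}(F_\varepsilon)\supset [0,1]$ for a lift $F_\varepsilon$ and $h_{\textrm{top}}(f_\varepsilon|_{R_\varepsilon})<\varepsilon$. The starting point is the completely integrable twist $f_0(x,y)=(x+y,y)$ on $\mathbb{S}^1\times[0,1]$, extended by the identity outside: it is area-preserving, satisfies the twist condition, and has rotation interval $[0,1]$ with vanishing topological entropy, yet possesses no instability region because every horizontal circle is invariant. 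The objective is therefore to perturb $f_0$ by a conservative, $C^1$-small perturbation that destroys every invariant essential circle with rotation number strictly between $0$ and $1$, while keeping the boundary circles $C_\pm=\mathbb{S}^1\times\{0,1\}$ invariant and the entropy arbitrarily small; this area-preserving object is the backbone of the examples for Theorems B and D, the remark being obtained by stopping before the dissipative step.

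For the destruction step I would use a countable composition of $C^\infty$, area-preserving, $C^r$-small diffeomorphisms $g_n$ supported in thin horizontal strips $\mathbb{S}^1\times I_n$ relatively compact in $\mathbb{S}^1\times(0,1)$, each obtained from a Mather-type invariant-circle destruction argument so that $g_n\circ f_0$ has no invariant essential circle with rotation number in $I_n$. Choosing the $I_n$ to cover $(0,1)$ and the $C^r$-norms of $g_n$ to decrease sufficiently fast, the composition $f_\varepsilon$ is a smooth area-preserving perturbation of $f_0$ that still satisfies the twist condition and fixes $C_\pm$ pointwise. By Birkhoff's theorem on instability regions, the closure of the strip between $C_-$ and $C_+$ is then a genuine instability region $R_\varepsilon$ in the sense of the Remark. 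Aubry--Mather theory applied to the lifted twist map produces for every $\omega\in[0,1]$ a minimal invariant set $M_\omega\subset R_\varepsilon$ realising $\omega$, so $\rho_{R_\varepsilon}(F_\varepsilon)\supset [0,1]$.

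The main obstacle is the quantitative entropy bound: by the Le~Calvez--Tal theorem cited in the Remark one cannot reach zero entropy, so the content is that the forced positivity can be made arbitrarily small. My plan is to estimate $h_{\textrm{top}}(f_\varepsilon|_{R_\varepsilon})$ from above via a Yomdin-type inequality together with the $C^\infty$-smallness of the non-integrable part of $f_\varepsilon$, or more directly by exhibiting an explicit generating cover of $R_\varepsilon$ whose Bowen balls have diameter controlled by the size of the perturbation. The delicate step is that the circle destructions inevitably create hyperbolic periodic orbits with homoclinic intersections, hence rotational horseshoes inside $R_\varepsilon$: these must be engineered to be \emph{flat}, with expansion rates depending continuously on the perturbation size, so that their entropy contribution tends to $0$ as $f_\varepsilon\to f_0$ in $C^\infty$. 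This quantitative horseshoe-flattening is the core of the construction and is precisely the same mechanism that underpins Theorems B and D, only in an area-preserving guise.
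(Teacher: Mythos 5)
Your overall plan — start from the integrable twist $\tau(x,y)=(x+y,y)$, keep the boundary circles $C_0,C_1$ invariant with rotation numbers $0$ and $1$, and perform an area-preserving $C^1$-small perturbation that turns $[C_0,C_1]$ into an instability region — matches the paper's skeleton. But the two key technical steps you propose are genuinely different from, and weaker than, what the paper does, and one of them has a gap.

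First, the way you create the instability region. You propose a countable composition of Mather-type circle-destruction perturbations $g_n$ with supports in thin horizontal strips. The paper instead uses the Arnaud--Bonatti--Crovisier $C^1$-connecting lemma for pseudo-orbits (Theorem~\ref{connecting2}) to produce transverse heteroclinic orbits connecting periodic points on $C_0$ to periodic points on $C_1$ (this is the second perturbation $f_2$, see Remark~\ref{rem-instabilityregion}). The connecting lemma approach directly produces the $\alpha$/$\omega$-limit orbits demanded by the definition of an instability region and is insensitive to the arithmetic of rotation numbers. Your Mather-type scheme has a subtlety you do not address: after you perturb in strip $I_{n+1}$, nothing a priori forbids new invariant essential circles with rotation number in $I_n$ from reappearing (for a twist map an invariant essential circle is a graph, but the graph of a circle with rotation number in $I_n$ need not stay in the strip $I_n$ and may pass through the support of $g_{n+1}$). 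Handling this requires an additional argument — either an infinite Baire-type induction or a genericity statement — that you would need to supply.

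Second, and more importantly, the entropy bound. You treat it as the hard part and propose Yomdin-type estimates and an explicit ``horseshoe-flattening'' scheme. The paper needs none of this: since $h_{\textrm{top}}(g)\le \frac{2}{n}\log\|Dg^n\|$ for all $n$ and $\frac{2}{n}\log\|D\tau^n\|\to 0$ (the integrable twist has polynomial derivative growth), Proposition~\ref{p.continuityentropy} shows that \emph{any} diffeomorphism in a suitable $C^1$-neighborhood of $\tau$ has entropy $<\varepsilon$, no matter how many horseshoes it contains. This makes ``flattening'' automatic: one fixes $n$ with $\frac{2}{n}\log\|D\tau^n\|<\varepsilon$ and then performs all subsequent perturbations inside the corresponding $C^1$-neighborhood $\mathcal{N}_\varepsilon$. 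Your appeal to Yomdin's theorem is also slightly off target: Yomdin gives upper semicontinuity of entropy for $C^\infty$ maps, not a quantitative bound in terms of the size of the perturbation, and the paper does not need to invoke it. Finally, your route uses Aubry--Mather theory to show $\rho_{R_\varepsilon}(F_\varepsilon)\supset[0,1]$, whereas the paper gets this for free from the invariant boundary circles with rotation numbers $0$ and $1$; this part of your proposal is correct but unnecessarily heavy.
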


%
%






\subsection{The techniques.}  We present here some key points in the proofs of Theorem A and B avoiding technicalities.

\smallskip

The main idea behind the proof of Theorem A is to work in the universal cover and use the fact that there are periodic points turning at different speeds in order to construct a
topological rectangle $R$ which has an iterate intersecting itself and a translate of itself as well in a \emph{Markovian} way.
\ve{Using this configuration and the results of \cite{KY} we obtain a rotational horseshoe as defined above.}

We are not able to control the number of iterates we need to obtain this intersection (and it would be impossible in view of Theorem B) but we give some geometric criteria that
forces a lower bound. The construction of this rectangle requires entering into properties
of the topology of non-compactly generated continua (a generalization of indecomposable continua). The two key points are the construction
of ``stable'' sets for periodic points, obtained by approaching the dynamics by hyperbolic dynamics in the $C^0$-topology (this step works in quite large generality,
see Theorem \ref{t.continuum}), and then show that for periodic points having different rotation vectors, these ``stable sets'' intersect both boundaries of a given annulus containing the circloid (Proposition \ref{buenosconexos}).


\smallskip

In order to construct the examples of Theorem B, the idea is to work with $C^1$-perturbations of a twist-map,
which are based on the $C^1$-connecting lemma for pseudo-orbits in the conservative setting, due to M.C. Arnaud, C. Bonatti and S. Crovisier (\cite{ArnaudBC}). The use of this \ro{theorem} in this case is not completely straight-forward, as it is a result of generic nature, and we need to take care of some non-generic properties of our examples. However, by an inspection of the \ro{proof in} \cite{CrovisierPanoramas}, one can state a suitable version in order to obtain our desired perturbations. We remark that similar kind of \ro{perturbative techniques} were already considered in \cite{girad} for different purposes. Using these perturbations one can construct a smooth diffeomorphism of the closed annulus which is conservative and for which points in each of the boundary components are homoclinically related (and have different rotation numbers). A further perturbation \ro{allows us to destroy} the annulus and an attracting circloid emerges, which still has the same rotation set. As the derivative of the
original
map had small growth, the same holds for the perturbations which ensures small entropy.

\smallskip

Theorem B shows that the usual arguments dealing with
Nielsen-Thurston theory as used for instance in \cite{llibre} and \cite{kwapitz} do not work for proving Theorem A. On the other hand, recently Le Calvez and Tal \cite{Forcing} developed a \emph{forcing} technique based in Le Calvez's foliation by Brower lines (\cite{lecalvez-fol}), which could provide an alternative proof of the positive entropy in Theorem A.


\smallskip

Let us end this introduction by mentioning that Crovisier, Kocsard, Koropecki and Pujals have announced progress in the study of a particular family of diffeomorphisms of the annulus which they call \emph{strongly dissipative}. In this class, they are able, among other things, to prove positive entropy if there are two rotation vectors and the maximal invariant set is transitive.  We notice that even if our proof does not give lower bounds on the entropy in all generality (and it cannot give one because of Theorem B), it is possible that for some families such a lower bound exists. In particular, we re-emphasize that our method does give a lower bound after some configuration is attained (see Lemma \ref{l.gromov}). 

\subsection{Organization of the paper.}

The structure of the article is the following. We start with some preliminaries in Section \ref{s.Prelim}. From those, subsections \ref{sectionentropy} and \ref{ss.connecting}
are used in the proof of Theorem B while subsections \ref{ss.continua} and \ref{ss.primeend} are used for the proof of Theorem A.

Theorem A and B have independent proofs and can be read in any order. Theorem A and Theorem C are proved in section \ref{chapterposent}, whereas Theorem B and D are proved in section \ref{chapterperttwist}. \ve{In subsection \ref{ss.teoC} a generalization of Theorem A is obtained from which Theorem C can be derived.}

\subsection{Acknowledgments:} We would like to thank M. C. Arnaud, S. Crovisier, T. J\"ager, A. Koropecki, P. Le Calvez and F. Tal and \ve{the referees} for their input to this paper.


\section{General Preliminaries}\label{s.Prelim}

We introduce in this section some preliminary well known results which will be used later. Some results hold in higher dimensions too but we will always restrict to the surface case. The reader can safely skip this section and come back when results are referenced to.

\subsection{A remark on continuity of entropy in the $C^1$-topology}\label{sectionentropy}

For a $C^1$-surface map $f:M\to M$ there is a bound on the topological entropy given by
$$ h_{top}(f) \leq  2 \log\  \textrm{sup}_{x\in M}  \| Df_x\|\ = 2 \log \| Df \|\ .$$

See for example \cite[Corollary 3.2.10]{KatokHasselblatt}. Since $h_{\textrm{top}}(f)=\frac{1}{n}h_{\textrm{top}}(f^n)$, we have

$$ h_{\textrm{top}}(f)\leq \frac{2}{n}\log \| Df^n\|\mbox{ for all }n\in\N\ .$$

We deduce the following.

\begin{prop}\label{p.continuityentropy} Let $f: M \to M$ be a $C^1$-surface map such that $\lim_{n\to \infty} \frac{2}{n}\log \|Df^n\| =0$. Then, for every $\eps>0$ there exists a $C^1$-neighborhood $\mathcal{N}$ of $f$ such that if $g \in\mathcal{N}$ one has that $h_{top}(g)< \eps$.
\end{prop}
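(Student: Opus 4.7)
The plan is to unpack the hypothesis to choose a single finite iterate at which the normalized log-derivative-norm is already below $\eps/2$, and then to propagate this smallness to a $C^1$-neighborhood of $f$ via continuity of $N$-th order derivatives, applying finally the displayed bound $h_{\textrm{top}}(g)\leq \frac{2}{n}\log\|Dg^n\|$ to each nearby $g$.

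Concretely, given $\eps>0$, first I would invoke the hypothesis $\lim_{n\to\infty}\frac{2}{n}\log\|Df^n\|=0$ to fix an integer $N$ for which
\[
\tfrac{2}{N}\log\|Df^N\|<\tfrac{\eps}{2}.
\]
Next, with $N$ fixed, I would use the chain rule to write $Dg^N_x=\prod_{k=0}^{N-1}(Dg)_{g^k(x)}$ and observe that the map $g\mapsto \|Dg^N\|$ (supremum over $x\in M$) is continuous at $f$ in the $C^1$ topology: each factor depends $C^0$-continuously on $g$ in the $C^1$ topology (uniformly in $x$ on the region where the perturbation is relevant), so a finite product of $N$ such factors depends continuously as well. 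This yields a $C^1$-neighborhood $\mathcal{N}$ of $f$ on which
\[
\tfrac{2}{N}\log\|Dg^N\|<\eps\qquad\text{for every } g\in\mathcal{N}.
\]
Finally, applying the displayed bound $h_{\textrm{top}}(g)\leq \frac{2}{N}\log\|Dg^N\|$ to any such $g$ gives $h_{\textrm{top}}(g)<\eps$, which is the desired conclusion.

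The only delicate point I anticipate is the uniform continuity of $g\mapsto \|Dg^N\|$ in step two, since $M=\A$ is non-compact and the supremum is taken over all of $M$. This is handled by noting that the $C^1$ perturbations considered in the paper are taken in the strong (Whitney) $C^1$ topology and in practice supported in a compact set, so outside a compact subset of $M$ the factors $Dg$ coincide with $Df$; on that compact subset the required uniform continuity is standard. Once this is granted, the remaining steps are essentially a one-line bookkeeping argument using the cited inequality $h_{\textrm{top}}(f^n)=n\,h_{\textrm{top}}(f)$.
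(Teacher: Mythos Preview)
Your proof is correct and follows essentially the same route as the paper's own proof: fix an iterate $N$ where the normalized bound is small, use $C^1$-continuity of $g\mapsto \|Dg^N\|$ to propagate this to a neighborhood, and apply the displayed entropy estimate. Your write-up is in fact more careful than the paper's (which simply asserts the existence of the neighborhood), since you justify the continuity via the chain rule and flag the non-compactness issue.
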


\begin{proof}
Fix $\eps>0$ and choose $n>0$ such that $\frac{2}{n} \log \|Df^n\| < \eps$. Choose a $C^1$-\az{neighbourhood} $\mathcal{N}$ of $f$ so that for every $g\in \mathcal{N}$ one has $\frac{2}{n} \log \|Dg^n\| < \eps$. By the estimate above, it follows that for every $g\in \mathcal{N}$ one has that $h_{top}(g) < \eps$.
\end{proof}

%
%
%

\subsection{Connecting lemma for pseudo-orbits}\label{ss.connecting}

In this section we state a $C^1$-perturbation lemma for pseudo-orbits in the conservative setting in the spirit of the well known pseudo-orbit connecting lemma (\cite{BonattiCrovisier,ArnaudBC}). 

\medskip

Let $M$ be a surface, $\nu$ an area form in $M$ and let $\textrm{Diff}^{\ 1}_{\nu}(M)$ be the space of $C^1$ area preserving diffeomorphisms, with the $C^1$ topology. We recall that given $\varepsilon$, a finite sequence $(z_k)_{k=0}^n$ is a $\varepsilon$-\emph{pseudo-orbit} (or $\eps$-\emph{chain}) from $p\in M$ to $q\in M$ when $z_0=p,\ z_n=q$ and
$$d(f(z_k),z_{k+1})<\varepsilon,\mbox{ for all }k=0,\dots,n-1\ .$$

Consider a compact set $K\subset M$. For $x,\ y\in M$ we denote $x\dashv_K y$ if for every $\varepsilon>0$ there exists a $\varepsilon$-pseudo-orbit $(z_k)_{k=0}^{n}$ with $z_0=x$, $z_n=y$ and
$$ f(z_k),\ z_{k+1}\in K \mbox{ whenever } f(z_k)\neq z_{k+1}\ .$$

Denote by $\textrm{Diff}_{\nu,per}^{\ 1}(M)$ the set of those $f \in \textrm{Diff}_{\nu}^{\ 1}(M)$ for which the set
of periodic points of period $k$ is finite, for all $k\in\N$. Recall that the \emph{support} of a perturbation $g$ of $f$ is the set of points $x\in M$ where $g(x)\neq f(x)$.

\begin{thm}[A version of the $C^1$-connecting lemma for pseudo-orbits \cite{CrovisierPanoramas}]\label{connecting2}
Let $M$ be a compact surface possibly with boundary and $f\in\textrm{Diff}_{\nu,per}^{\ 1}(M)$. Given a \az{neighbourhood} $\mathcal{N}\subset \textrm{Diff}_{\nu}^{\ 1}$ of $f$, there exists $N=N(f,\mathcal{N})$ such that:
\begin{itemize}
\item if $K$ is a compact set disjoint from the boundary,
\item $U$ is an arbitrary small neighborhood of $K\cup \dots\cup f^{N-1}(K)$,
\item and $p,\ q\in M$ with $p\dashv_K q$,
\end{itemize}
then, there exist a perturbation $g\in\mathcal{N}$ of $f$ supported in $U$ and $n>0$ such that $g^n(p)=q$.
\end{thm}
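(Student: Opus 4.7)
The plan is to reduce the statement to the conservative pseudo-orbit connecting lemma of Arnaud--Bonatti--Crovisier, as exposed in \cite{CrovisierPanoramas}, and to verify that the proof given there actually produces a perturbation whose support lies in the set we need. The standard statement asserts that two points linked by an $\varepsilon$-pseudo-orbit (for every $\varepsilon>0$) through a compact set $K$ can be connected by a true orbit of an arbitrarily small $C^1$ area-preserving perturbation. What is special about the formulation here is the extra control: the support of the perturbation is confined to an arbitrarily small neighbourhood of the finite union $K\cup f(K)\cup\dots\cup f^{N-1}(K)$, and the integer $N$ depends only on $f$ and on the target $C^1$-neighbourhood $\mathcal{N}$, not on $K$, $p$, or $q$.

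The key technical ingredients I would invoke are the following. First, the conservative version of Hayashi's perturbation lemma: any $C^1$-area-preserving diffeomorphism admits, in any prescribed $C^1$-neighbourhood, perturbations that can redirect a forward orbit entering a small ball into a backward orbit leaving another small nearby ball, with support contained in a small tube. This gives a perturbation scale $r_0=r_0(f,\mathcal{N})$ and a corresponding number of iterates $N=N(f,\mathcal{N})$ over which the Hayashi boxes are spread when the perturbation is activated. Second, the Bonatti--Crovisier selection procedure (the ``cube lemma''), which, given a pseudo-orbit through $K$, refines it into finitely many jumps of diameter less than $r_0$, each of which can be absorbed by one application of Hayashi's lemma, and each of which occurs inside $K$. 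The hypothesis $f\in\textrm{Diff}^{\,1}_{\nu,per}$ is what allows the selection procedure to avoid the obstructions coming from persistent (non-isolated) periodic orbits of small period, where the perturbation argument would otherwise fail.

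With these tools in hand, the argument proceeds in the usual way: fix $\mathcal{N}$, extract the scale $r_0$ and the integer $N$ from Hayashi's lemma in the conservative category; given an $\varepsilon$-pseudo-orbit from $p$ to $q$ with $\varepsilon\ll r_0$ whose jumps lie in $K$, apply the selection lemma to obtain finitely many disjoint Hayashi windows $B_1,\dots,B_s$ centered at points of $K$; then apply the perturbation lemma in each window in turn. Each elementary perturbation has support in a small tube around an orbit segment of length $N$, starting at a point in $K$; by shrinking $r_0$ further (staying in $\mathcal{N}$) we force each such tube into the preassigned neighbourhood $U$ of $K\cup\dots\cup f^{N-1}(K)$. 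The composition of these finitely many commuting perturbations yields the desired $g\in\mathcal{N}$ with $g^n(p)=q$ for some $n>0$, supported in $U$.

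The main obstacle I expect is precisely this support control. In the classical dissipative statement of the connecting lemma for pseudo-orbits, the support is only required to be small in measure or diameter, not to be contained in a tube around the first $N$ forward iterates of $K$. One must therefore open the proof in \cite{CrovisierPanoramas} and check that, once the scale $r_0$ has been fixed, each elementary conservative Hayashi perturbation really has support inside $\bigcup_{j=0}^{N-1} f^j(V)$ for some small neighbourhood $V$ of the centre of the window, and that in the area-preserving setting this bookkeeping is compatible with the need to restore the volume form (which is where the conservative case typically needs additional care and where the assumption that $f$ has only finitely many periodic points of each period is genuinely used, to keep the perturbation supports away from unavoidable fixed behaviour).
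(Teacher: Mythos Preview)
Your proposal is correct and follows the same route as the paper. The paper's own justification is even terser than yours: it simply points to Theorem~III.1 (via Theorem~III.4) of \cite{CrovisierPanoramas}, notes that the integer $N$ is fixed there, and observes that the only difference from the standard statement is that here merely the \emph{jumps} of the pseudo-orbit (rather than the whole pseudo-orbit) are required to lie in $K$; since the elementary perturbations in Crovisier's proof are performed only at the jumps, the support automatically lands in a small neighbourhood of $K\cup\dots\cup f^{N-1}(K)$. Your outline of the Hayashi boxes and the selection procedure is exactly the content of that inspection, so there is nothing to correct.
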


This result follows with the same proof of Theorem III.1 presented in \cite{CrovisierPanoramas} via \cite[Theorem III.4]{CrovisierPanoramas} where the choice of $N$ appears. The difference is that in \cite{CrovisierPanoramas} the
statement requires the complete pseudo-orbit to be contained in $K$ while here we demand only the \emph{jumps} to be contained there. By an inspection of the proofs in \cite{CrovisierPanoramas} one can see that the perturbations 
are only performed when the pseudo-orbit has jumps, so our statement holds with only minor modifications.

\begin{obs}\label{remark:smooth} The diffeomorphism $g$ can be considered to be as smooth as $f$ since it is obtained by composing a finite number of elementary perturbations with small support, all of which are smooth (though their $C^r$-size with $r>1$ might be \ro{large}).
\end{obs}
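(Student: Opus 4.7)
The goal is to upgrade the conclusion of Theorem \ref{connecting2}: although the statement produces a $C^1$-perturbation $g \in \mathcal{N}$, I want to argue that if $f \in \text{Diff}^{\,r}_\nu(M)$ for some $r \in \{2, 3, \dots, \infty\}$, then $g$ can be taken in $\text{Diff}^{\,r}_\nu(M)$ as well. My plan is to trace through the architecture of the proof of Theorem III.1 in \cite{CrovisierPanoramas} and verify that, at every stage, the perturbation devices used are intrinsically smooth.

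First, I would recall that the proof of the pseudo-orbit connecting lemma is not a single global surgery: it is an inductive construction producing a \emph{finite chain of elementary perturbations} $g = \varphi_k \circ \varphi_{k-1} \circ \cdots \circ \varphi_1 \circ f$, where each $\varphi_i$ is supported in a small ball $B_i$ disjoint from the others and is designed to realize one ``jump'' of the pseudo-orbit (plus some preliminary adjustments coming from Hayashi's connecting lemma and the selection-of-orbits mechanism). Thus the first step is simply to isolate the form of each $\varphi_i$. In the conservative version of Arnaud--Bonatti--Crovisier \cite{ArnaudBC}, each $\varphi_i$ is an area-preserving diffeomorphism obtained either as the time-one map of a compactly supported Hamiltonian flow with a smooth Hamiltonian, or as a smooth symplectic interpolation between the identity and a prescribed translation in Darboux coordinates. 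Each of these devices is manifestly $C^\infty$.

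Second, I would observe that composing a $C^r$ map $f$ with finitely many $C^\infty$ diffeomorphisms (and vice versa) preserves the regularity class $C^r$. Hence $g = \varphi_k \circ \cdots \circ \varphi_1 \circ f$ lies in $\text{Diff}^{\,r}_\nu(M)$ whenever $f$ does. The neighborhood $\mathcal{N}$ in the statement controls the $C^1$-distance from $f$ to $g$, which corresponds to requiring that each $\varphi_i$ be $C^1$-close to the identity and that the number of elementary perturbations be uniformly bounded (this is guaranteed by the choice of $N$ in the statement). None of this bounds the higher derivatives of the $\varphi_i$: a bump function of $C^1$-size $\eps$ supported in a ball of radius $\delta \ll \eps$ has $C^2$-norm of order $\eps/\delta$, which blows up as $\delta \to 0$. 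This is exactly the reason for the parenthetical caveat that the $C^r$-size for $r > 1$ can be large.

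The only potential obstacle would be if, somewhere in the proof of Theorem III.1 in \cite{CrovisierPanoramas}, one had to pass to a smoothing or to a $C^1$-limit of perturbations (for instance via a Baire-category genericity argument) which could \emph{a priori} destroy higher regularity. My plan is therefore to inspect the proof step by step and confirm that the final $g$ is produced in \emph{finitely many explicit steps} rather than as a limit: the selection of the pseudo-orbit is combinatorial, the application of Hayashi's connecting lemma is constructive with smooth bumps, and the concatenation of jumps is finite (bounded by $N$). Once this is verified, the remark follows directly, with no further argument needed.
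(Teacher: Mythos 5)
Your proposal is correct and follows the same route as the paper: the remark is justified precisely by observing that $g$ arises from $f$ by composing finitely many smooth elementary perturbations, and you additionally explain why the $C^r$-norms for $r>1$ may blow up, which matches the parenthetical caveat.
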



\subsection{Some properties of separating continua}\label{ss.continua}

We first recall some basic facts about continua and separation properties in surfaces. We refer the reader to \cite{BG} and references therein for more information. After this, we will show a property of irreducible annular continua that will be useful in the proof of Theorem A.

\medskip

Throughout this article we consider the annulus $\A=\mathbb{S}^1\times\R$ and $\pi:\R^2\to \A$ the usual covering map. Further, we will fix a  two-point compactification of $\A$ given by the sphere $S^2$ and two different points $+\infty,-\infty\in S^2$.

Recall that a continuum is a compact \ro{non-empty} connected metric space. We say a continuum $E\subset\A$ is \emph{essential}, whenever there are two unbounded connected components in $\A\setminus E$. These connected components are denoted in general by $\mathcal{U}^+$ and $\mathcal{U}^-$ where the first one accumulates in $+\infty$ and the second one in $-\infty$, when considered in $S^2$. Notice that there could be also several bounded connected components in $E^c$.
Non essential \ro{continua} in $\A$ are called \emph{inessential}, and can be \az{characterised} as those continua contained in some topological disk in $\A$.


An annular continuum $K\subset \A$ is an essential continuum so that $K^c$ contains no bounded connected components. Finally, an \emph{irreducible annular continuum} or \emph{circloid} $\C$, is an annular continuum which does not contain properly any other annular continua. As it is well know, the topology of these continua can be very simple as the one of the circle, or extremely complicated as the case of the pseudo-circle. It can be the case where the circloid has non-empty interior; an example (and figure) can be found, for instance, in  \cite{ratajula}.  


When a circloid has empty interior it is called \emph{cofrontier} as it coincides with the boundaries  of $\mathcal{U}^+$ and $\mathcal{U}^-$. A \ro{partial} converse
result holds: whenever an annular continuum $\C$ verifies that $\partial\C=\partial \mathcal{U}^+\cap \partial \mathcal{U}^-$, we have that $\C$ is a circloid
(with possible non-empty interior). See \cite[Corollary 3.3]{Jaeger}.

\smallskip

There is an important class of continua, which is associated to a complicated topology, defined as follows. An \emph{indecomposable} continuum $\C$ is a continuum such that whenever $C_1$ and $C_2$ are any pair of continua included in $\C$ with $\C=C_1\cup C_2$, we have that $C_1=\C$ or $C_2=\C$. In particular, one can define
\emph{indecomposable cofrontier}. This definition is not suitable for circloids having non-empty interior, as one can observe that in this case the continua can be always decomposed. Nevertheless, a suitable \az{generalisation} of indecomposabilty for this situation can be considered, given by the following (see \cite{jagerpass}).

\smallskip

Let $\C\subset \A$ be an essential annular continua. We say that $\C$ is \emph{compactly generated} if there exists
a compact connected set $\hat{\C}$ in $\R^2$ so that $\pi(\hat{\C})=\C$ (such continuum $\hat{C}$ is called a \emph{compact generator}).
In particular this definition can be applied to essential circloids. The annular continua which are \emph{not \az{compactly} generated} and \emph{indecomposable} annular continua have
strong relationships even if their properties are slightly different (see e.g. \cite[Remarks 1.1 and 5.5]{jagerpass}).  For this paper, the notion of being non-compactly
generated is the most suitable.

\smallskip

In this article we deal only with \emph{non compactly generated circloids} as compactly generated ones do not support two rotation vectors for a given dynamics. This result was originally proved for Birkhoff attractors in \cite{lecalvez} and then \az{generalised} for cofrontiers in \cite{BG}. Finally, it was extended
in \cite{jagerpass} to deal with circloids. Although in this last reference the proof is not explicitly given for the non-empty interior case, as it is remarked by the authors, the proof they give works exactly as it is written  for circloids with non-empty interior (see  \cite[Remark 5.5]{jagerpass}).

\begin{thm}[\cite{BG,jagerpass}]\label{teo-circindec}  Let $\in\textrm{Homeo}_+(\A)$ having an invariant circloid $\C$ such that $\rho_\C(F)$ contains two different rotation vectors for some lift $F$ of $f$. Then, $\C$ is non
compactly generated.
\end{thm}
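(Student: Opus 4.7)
\emph{Plan.} I will prove the contrapositive: if the invariant circloid $\C$ is compactly generated, then $\rho_\C(F)$ reduces to a single point, contradicting the hypothesis. The goal is a uniform bounded-deviation estimate
$$|\pi_1(F^n(x)) - \pi_1(x) - nc| \leq K \quad \text{for all } x \in \pi^{-1}(\C),\ n \geq 1,$$
for some integer $c \in \Z$ and constant $K > 0$, which immediately forces $\rho_\C(F) = \{c\}$.

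\emph{Setup and structural step.} Fix a compact connected $\hat\C \subset \R^2$ with $\pi(\hat\C) = \C$, and let $D = \mathrm{diam}(\pi_1(\hat\C))$. Write $\tilde\C = \pi^{-1}(\C)$, and let $\tilde\cU^+$, $\tilde\cU^-$ denote the two unbounded components of $\R^2 \setminus \tilde\C$. Since $f \in \textrm{Homeo}_+(\A)$ preserves $\C$ and the two ends of $\A$, its lift $F$ preserves $\tilde\C$, $\tilde\cU^+$, and $\tilde\cU^-$, and commutes with $\sigma(x,y) = (x+1,y)$. The image $F(\hat\C)$ is a compact connected subset of $\tilde\C$ and still satisfies $\pi(F(\hat\C)) = \C$, so it is another compact generator. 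The crucial structural input is that, because $\C$ is a circloid, the family of translates $\{\sigma^k(\hat\C)\}_{k \in \Z}$ inherits a linear order compatible with the horizontal direction: the component $\tilde\cU^\pm$, being simply connected and $\sigma$-invariant, separates $\R^2$ into horizontal ``slots'' so that consecutive translates sit in consecutive slots up to a bounded overlap of size $O(D)$. Since $F$ sends $\hat\C$ to a compact connected subset of $\tilde\C$ and respects the ordering of the slots (because it preserves $\tilde\cU^+$ and $\tilde\cU^-$ individually), there must exist a single integer $c \in \Z$ and a constant $K_0$ depending only on $D$ and on the width of $F(\hat\C)$ such that
$$F(\hat\C) \subset \sigma^c(\hat\C) + [-K_0, K_0] \times \R.$$

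\emph{Iteration and conclusion; main obstacle.} Applying this structural observation to the generator $F(\hat\C)$ (rather than $\hat\C$) one obtains again a well-defined integer shift, which must equal $c$ by $\sigma$-equivariance. Inductively $F^n(\hat\C)$ is horizontally within a uniform distance $K$ of $\sigma^{nc}(\hat\C)$, and hence for every $x \in \hat\C$
$$|\pi_1(F^n(x)) - \pi_1(x) - nc| \leq K + 2D.$$
By $\sigma$-equivariance this estimate extends to every $x \in \tilde\C$, giving $\rho_\C(F) = \{c\}$ and the desired contradiction. The delicate part of the argument, and its only substantive content, is the structural step: establishing that the translates of $\hat\C$ inside $\tilde\C$ inherit a linear order forcing $F$ to act by a single integer shift. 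This is exactly where irreducibility of $\C$ is indispensable; without it the translates can interlock in tangled patterns that accommodate multiple rotation numbers (e.g.\ Walker's example referenced in the introduction, where the non-irreducible annular continuum contains an essential circle inside). For the cofrontier case this order is set up in \cite{BG}; the extension to circloids with possibly non-empty interior is the content of \cite{jagerpass}, where the boundary arguments are replaced by arguments on $\partial \tilde\cU^+ \cap \partial \tilde\cU^-$ and the conclusion transfers essentially without change.
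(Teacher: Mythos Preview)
The paper does not supply a proof of this theorem; it is quoted from \cite{BG} (cofrontiers) and \cite{jagerpass} (general circloids), so there is no in-paper argument to compare against. I can only assess your sketch on its own merits.

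Your contrapositive strategy and the role you assign to a linear order on the translates $\sigma^k(\hat\C)$ are indeed the backbone of the cited proofs. But as written the argument has a genuine gap in the iteration. The constant $K_0$ in your structural step depends on the horizontal width of $F(\hat\C)$; when you re-apply the step with $F(\hat\C)$ as the new generator you obtain a constant depending on the width of $F^2(\hat\C)$, and inductively on the width of $F^n(\hat\C)$. Nothing you have said bounds these widths uniformly in $n$ --- indeed $\bigcup_{k=0}^{m}\sigma^k(\hat\C)$ is itself a connected compact generator of width roughly $m+D$, so compact generators can be arbitrarily wide. Hence the telescoped bound you would need, $\sum_j K_0^{(j)}<\infty$, is unjustified, and the claimed uniform $K$ does not follow. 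The side assertion that the shift integer repeats, $c'=c$ ``by $\sigma$-equivariance'', is likewise unsubstantiated: $c$ and $c'$ are defined relative to two different generators, and $\sigma$-equivariance of $F$ alone does not identify them.

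The references you invoke avoid this precisely by not iterating a horizontal-slab estimate. Instead the compact generator together with the irreducibility of $\C$ is used to produce a $\sigma$-equivariant monotone map $\tilde\C\to\R$, equivalently a semiconjugacy from $f|_{\C}$ to a circle homeomorphism; uniqueness of the rotation number is then Poincar\'e's theorem on the quotient circle, where the additive structure is exact rather than approximate. Your final paragraph correctly locates the substantive content in the order structure, but the conclusion you extract from it (mere horizontal proximity to a single translate, with a constant depending on the image width) is too weak to support the induction you propose.
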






\ve{We establish next a proposition concerning the topology of non-compactly generated circloids.
Given a circloid $\C\subset \A$, $x\in\C$, $\tilde{\C}=\pi^{-1}(\C)$ and $\hat{x}$ a lift of $x$ we define
$$\hat{C}_{\hat{x}}=\bigcup_{k\in\N} \textrm{c.c.}_{\hat{x}}[\tilde{\C}\cap\pi_1^{-1}([-k,k])].\ \footnote{We denote for any point $x$ in a topological space $\mathcal{X}$
its connected component by $\textrm{c.c.}_x(\mathcal{X})$.}$$

For indecomposable co-frontiers, these are connected sets which lifts the \emph{composants} (see \cite{HockingYoung}).

\begin{prop}\label{prop-composant}

Let $\C$ be a non compactly generated circloid. Then, $\hat{C}_{\hat{x}}$ is an unbounded connected set which does not contain any point $\hat{x}+j$, $j\in\Z\setminus\{0\}$.


\end{prop}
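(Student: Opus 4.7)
Plan:

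For connectedness, observe that whenever $k\le k'$ are sufficiently large that $\hat{x}$ lies in $\pi_1^{-1}([-k,k])$, we have $C_k\subset C_{k'}$, since the smaller connected set through $\hat{x}$ is contained in any larger one. Consequently $\hat{C}_{\hat{x}}$ is an increasing nested union of connected sets all containing $\hat{x}$, hence is connected.

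The other two conclusions I would prove by contraposition, showing in each case that a failure yields a compact connected subset of $\R^2$ projecting onto $\C$, contradicting the non-compact generation hypothesis.

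Suppose first $\hat{x}+(j,0)\in\hat{C}_{\hat{x}}$ for some $j\in\Z\setminus\{0\}$; then both points lie in a common $C_k$, which is a compact connected subset of $\tilde{\C}$. I would first show $\pi(C_k)$ is essential in $\A$: were $\pi(C_k)$ contained in an open disk $D\subset\A$, the preimage $\pi^{-1}(D)$ would decompose as a disjoint union of open disks in $\R^2$, and the connected set $C_k$ would be trapped in a single one, contradicting that $\hat{x}$ and $\hat{x}+(j,0)$ belong to different components of $\pi^{-1}(D)$. Since $\C$ is an irreducible annular continuum and $\pi(C_k)$ is an essential compact subcontinuum, the annular fill of $\pi(C_k)$ equals $\C$, so $\C=\pi(C_k)\cup\bigcup_\alpha W_\alpha$ where $\{W_\alpha\}$ are the bounded complementary components of $\pi(C_k)$ in $\A$. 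For each $\alpha$ I select a lift $\widetilde{W_\alpha}\subset\R^2$ whose boundary meets $C_k$; using that $\C$ is compact (so each $W_\alpha$ has diameter bounded by $\mathrm{diam}(\C)$ and the chosen lifts all lie in a fixed bounded neighborhood of $C_k$), the set $\hat{\C}=C_k\cup\bigcup_\alpha\overline{\widetilde{W_\alpha}}$ is compact, connected, contained in $\tilde{\C}$, and satisfies $\pi(\hat{\C})=\C$, yielding the desired contradiction.

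Suppose instead $\hat{C}_{\hat{x}}$ is bounded, hence strictly contained in $\pi_1^{-1}((-N,N))$ for some $N$. In the compact metric space $\tilde{\C}\cap\pi_1^{-1}([-N,N])$, components coincide with quasi-components, so there exists a clopen subset $U$ containing $C_N$ with $U\subset\pi_1^{-1}((-N,N))$; this $U$ is automatically clopen in $\tilde{\C}$ (open because $U$ lies in the interior of the strip, closed because $\tilde{\C}\cap\pi_1^{-1}([-N,N])$ is closed in $\tilde{\C}$) and compact. Invoking the connectedness of $\tilde{\C}$ for non-compactly generated circloids, a structural feature extracted from \cite{BG,jagerpass}, one concludes $U=\tilde{\C}$, so $\tilde{\C}$ is compact --- contradicting that $\tilde{\C}$ contains the unbounded orbit $\{\hat{x}+(j,0):j\in\Z\}$. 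The main technical obstacle is the augmentation step in the previous case, where one must verify that $\hat{\C}$ is compact despite a possibly infinite collection $\{\widetilde{W_\alpha}\}$; this rests on the fact that accumulation points of $\bigcup_\alpha\widetilde{W_\alpha}$ coming from sequences in different lifts project into $\pi(C_k)$ and so, by our choice of lifts adjacent to $C_k$, remain inside $C_k$.
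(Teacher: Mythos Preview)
Your argument is essentially correct and follows the same skeleton as the paper's proof, though you work harder in places than strictly necessary and leave one step incomplete.

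For connectedness you do exactly what the paper does. For unboundedness the paper argues via boundary bumping: since $\tilde{\C}$ is connected and unbounded, each component of $\tilde{\C}\cap\pi_1^{-1}([-k,k])$ must touch the boundary of the strip, so $C_k$ grows without bound. Your clopen/quasi-component argument is a valid repackaging of the same fact, but note that the connectedness of $\tilde{\C}=\pi^{-1}(\C)$ holds for \emph{any} essential annular continuum (it is not special to non-compactly-generated circloids), so the appeal to \cite{BG,jagerpass} is misplaced.

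For the translate condition the paper simply observes that if $\hat x$ and $\hat x+j$ both lie in $C_k$ then $\pi(C_k)\subset\C$ is an essential subcontinuum, hence (after filling) an annular continuum, hence equal to $\C$ by irreducibility, so $C_k$ is a compact generator. Your more elaborate construction---adjoining one chosen lift $\widetilde{W_\alpha}$ of each bounded complementary domain---is only needed if $\C$ has nonempty interior, and in that case your compactness verification is not quite finished. You correctly show that an accumulation point $x$ of a sequence $x_n\in\widetilde{W_{\alpha_n}}$ with the $\alpha_n$ distinct must satisfy $\pi(x)\in\pi(C_k)$, but ``by our choice of lifts adjacent to $C_k$'' does not by itself force $x\in C_k$; you need the additional (standard) fact that only finitely many complementary domains of a planar continuum have diameter exceeding any given $\epsilon>0$, so that $\mathrm{diam}(\widetilde{W_{\alpha_n}})\to 0$, whence $x_n$ is asymptotic to the contact points $z_n\in C_k\cap\overline{\widetilde{W_{\alpha_n}}}$ and therefore $x\in C_k$. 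Once this is said, your argument is complete.
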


\begin{proof}

By definition, $\hat{C}_{\hat{x}}$ is an increasing union of compact connected sets $C_k=\textrm{c.c.}_{\hat{x}}[\tilde{\C}\cap\pi_1^{-1}([-k,k])]$ containing $\hat{x}$.
Moreover, as $\tilde{\C}$ is connected and unbounded, one can observe that every connected component of $\tilde{\C}\cap\pi_1^{-1}([-k,k])$ must intersect $\partial\pi_1^{-1}([-k,k])$,
so $C_k$ meets $\partial\pi_1^{-1}([-k,k])$ for every $k\in\N$. This implies that $\hat{C}_{\hat{x}}$ is unbounded and connected.

Assume for a contradiction we have $\hat{x}+j\in\hat{C}_{\hat{x}}$ with  $j\in\Z\setminus\{0\}$ Then we have that both $\hat{x}$ and $\hat{x}+j$ belong to $C_k$ for some $k\in\N$.
Thus, $\pi(C_k)$ is an annular continuum, so it must coincide with $\C$ as it is a circloid. But this imply that $\C$ has a compact generator.

\end{proof}

}

\medskip

We are interested in studying inessential continua intersecting a non compactly generated circloid $\C$, which does no meet one of the unbounded components in the complement of the circloid. Fix a non compactly generated circloid $\C$ and let $K\not\subset\C$ be an inessential continuum in $\A$, so that $K \cap \cU^- =\emptyset$. Everything we show for this situation also holds for the complementary case where $K \cap \cU^+ =\emptyset$\ro{.}

\smallskip

In general for a continuum $C\subset\A$ we say that an injective curve $\gamma:[0,+\infty)\to \A$ lands at $z\in C$ from $+\infty$ if $\gamma(t)\in C^c$ for all $t\neq 0$, $\gamma(0)=z$, and $\lim_{t\to +\infty}\gamma(t)=+\infty$ when viewed in $S^2$. When $C$ is an essential continua, the points $z$ \ro{which admit a curve landing on them}, are called \emph{accesible points}, and it is easy to prove that they form a dense set in $C\cap\partial\mathcal{U}^+$. Thus in our situation we can consider a curve $\gamma$ as before, so that

\begin{itemize}

\item $\gamma\cap K=\emptyset$\footnote{We here abuse notation by identifying the curve with its image using the same name.},

\item $\gamma$ lands at $z\in\C$.

\item \ve{$\pi_1(\hat{\gamma})$ is a bounded set for any lift $\hat{\gamma}$ of $\gamma$.}

\end{itemize}

Let $\hat A$ be a connected component of $\pi^{-1}(\cU^+ \setminus \gamma)$. Our main goal is to show the following property which is important to prove Theorem A.

\begin{prop}\label{prop-levacotado}

It holds that $\pi^{-1}(K) \cap \hat A$ is bounded.

\end{prop}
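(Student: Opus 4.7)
The plan is to reduce the proposition to showing that $\hat A$ is horizontally bounded in $\R^2$, i.e., contained in a vertical strip $[\alpha,\beta]\times\R$. Granted this, the conclusion is immediate: $K$ being inessential yields a compact lift $\hat K$ of $K$ so that $\pi^{-1}(K) = \bigsqcup_{k\in\Z}(\hat K+(k,0))$ is a disjoint union of compact translates; only finitely many of them meet the strip containing $\hat A$, and each such intersection is a bounded subset of the corresponding compact translate, so $\pi^{-1}(K)\cap\hat A$ is bounded.

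Next I would set up the topology. Since $\C$ is a circloid, $\mathcal{U}^+$ is homeomorphic to an open annulus (the unbounded complementary component accumulating at $+\infty$), and hence $\pi^{-1}(\mathcal{U}^+)$ is its universal cover, a simply connected open subset of $\R^2$. The curve $\gamma$ being a proper arc joining the two ends of $\mathcal{U}^+$, its removal yields a topological disk $V := \mathcal{U}^+\setminus\gamma$, and each component of $\pi^{-1}(V)$ projects homeomorphically onto $V$. Each lift $\hat\gamma^j := \hat\gamma+(j,0)$ is a properly embedded injective arc in $\pi^{-1}(\mathcal{U}^+)$ with one end at $\hat z+(j,0)\in\tilde{\C}$ and the other tending to vertical infinity; the bounded-first-coordinate hypothesis gives $\hat\gamma^j\subset[a+j,b+j]\times\R$ for some fixed $a<b$, and distinct lifts are pairwise disjoint.

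The crux is the structural claim that the components of $\pi^{-1}(\mathcal{U}^+)\setminus\bigsqcup_j\hat\gamma^j$ are precisely the \emph{sectors} bounded by two consecutive lifts $\hat\gamma^j,\hat\gamma^{j+1}$, so that each is confined to the horizontal strip $[a+j,b+j+1]\times\R$. I would establish this via the Carath\'eodory prime end compactification of $\pi^{-1}(\mathcal{U}^+)$, a closed topological disk. The end at $+\infty$ corresponds to a single prime end $e_\infty$, witnessed by the nested large circles $\{x^2+y^2=R^2\}$ in $\R^2$ whose intersections with $\pi^{-1}(\mathcal{U}^+)$ give shrinking cross-cuts equivalent under the prime-end relation. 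Each boundary point $\hat z+(j,0)$ gives a distinct accessible prime end $e_j$, and the $\Z$-action (fixing $e_\infty$, translating the $e_j$) forces the $e_j$ into monotone cyclic order on the boundary circle with $e_j$ and $e_{j+1}$ adjacent. The arcs $\hat\gamma^j$ thus become chords from $e_j$ to the common $e_\infty$, and their complement in the disk partitions into the claimed sectors. A more elementary alternative uses a generic high horizontal line $\R\times\{M\}\subset\pi^{-1}(\mathcal{U}^+)$ on which $\hat\gamma$ crosses transversely at a single point $x^*$; $\Z$-equivariance then singles out the open intervals $(x^*+j,x^*+j+1)\times\{M\}$ as canonical representatives of the components, one per sector.

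The principal obstacle will be justifying that each component is genuinely confined to its flanking strip in $\R^2$. For a circloid with complicated topology, $\tilde{\C}$ may form low-altitude pockets within $\pi^{-1}(\mathcal{U}^+)$, and one must preclude that a component slips through such pockets to reach horizontally distant regions. The key ingredient here is the simply connectedness of $\pi^{-1}(\mathcal{U}^+)$ combined with the properness of each $\hat\gamma^j$ as an arc from $\tilde{\C}$ to $+\infty$: this forces $\hat\gamma^j$ to fully separate the simply connected universal cover into two components, so that a point on one side cannot be connected to a point on the other without crossing the arc. Proposition~\ref{prop-composant}, providing unboundedness and translate-disjointness of composants, further controls which pieces of $\tilde{\C}$ can appear on the bottom boundary of any particular sector, closing the remaining gap.
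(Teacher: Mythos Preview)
Your reduction has a genuine gap at exactly the point you flag as ``the principal obstacle'': the assertion that each component $\hat A$ is horizontally bounded is not established, and in fact it is generally \emph{false} for non-compactly generated circloids. The ingredients you list do not close it. Simple connectedness of $\pi^{-1}(\mathcal U^+)$ together with properness of $\hat\gamma^j$ only yields that $\hat\gamma^j$ \emph{topologically} separates $\pi^{-1}(\mathcal U^+)$ into two pieces; it says nothing about where those pieces sit in the ambient $\R^2$. Your prime-end picture likewise confuses topological and metric information: that $\hat A$ corresponds to a sector in the Carath\'eodory disk places no constraint on $\pi_1(\hat A)$, because the uniformising map need not respect horizontal coordinates. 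Concretely, the part of $\partial\hat A$ lying in $\tilde{\mathcal C}$ is governed by the impressions of the prime ends in the interval $(e_0,e_1)$; for indecomposable cofrontiers such as the pseudo-circle every prime-end impression is the entire continuum, and one then checks that a connected closed subset of $\tilde{\mathcal C}$ projecting onto all of $\mathcal C$ cannot be compact (else it would be a compact generator, contradicting the hypothesis). Hence $\overline{\hat A}$ is horizontally unbounded. Proposition~\ref{prop-composant}, far from helping, points in the same direction: the composant through $\hat z$ is itself unbounded and sits in $\partial\hat A$.

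The paper's proof proceeds quite differently and makes essential use of the hypothesis $K\cap\mathcal U^-=\emptyset$, which your approach would render superfluous. Rather than bounding $\hat A$, it bounds how many translates of $\hat A$ a single lift $\hat K$ can meet. First (Lemma~\ref{lema-cuantacomposent}) it shows $\hat K$ meets at most two consecutive translates $\hat C_{\hat z}+k$ of the composant, using that $\hat K\cup\Lambda_k\cup\Lambda_{k'}\cup(\hat\gamma+k)\cup(\hat\gamma+k')$ avoids $\tilde{\mathcal U}^-$ to trap an intermediate landing point away from $\tilde{\mathcal U}^-$, contradicting that $\mathcal C$ is a circloid. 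Then it builds, for each large $r$, a barrier $\Gamma_r=\hat\gamma\cup\Lambda_0^r\cup(\hat\gamma+1)\cup\Lambda_1^r\cup I_r$ out of pieces of composants and a far vertical segment, and shows the compact $\hat K$ is eventually trapped in the component $U^+(r)$ of $\Gamma_r^c$; a second landing curve $\eta$ from a distant translate $\hat A+k$ cannot enter $U^+(r)$ without crossing the barrier, forcing $\hat K$ to meet $\hat C_{\hat z}$ or $\hat C_{\hat z}+1$. Combining both steps bounds the number of translates. The argument is intrinsically about $K$, not about $\hat A$.
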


Consider $\tilde{\C}=\pi^{-1}(\C)$, $\tilde{\mathcal{U}}^+=\pi^{-1}(\mathcal{U}^+)$ and $\tilde{\mathcal{U}}^-=\pi^{-1}(\mathcal{U}^-)$. Fix a lift $\hat{K}$ of $K$ which intersects $\hat{A}$. In order to prove Proposition \ref{prop-levacotado}, it is enough to show that only finitely many horizontal integer translations of $\hat{A}$ meets $\hat{K}$.

\smallskip

We prove the following lemma. Recall that $z \in \C$ is the landing point of the curve $\gamma$.

\begin{lema}\label{lema-cuantacomposent} Fix $\hat{z}\in \pi^{-1}(z)$. If $\hat K$ intersects $ \hat C_{\hat z} + k$ and $\hat C_{\hat z} + k'$ then $|k-k'| \leq 1$.
\end{lema}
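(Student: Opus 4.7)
My plan is to argue by contradiction: assume $\hat K$ meets $\hat C_{\hat z}+k$ and $\hat C_{\hat z}+k'$ with $|k-k'|\geq 2$, and after a horizontal translation take $k=0$, $k'\geq 2$. A preliminary observation is that the translates $\{\hat C_{\hat z}+j\}_{j\in\Z}$ are pairwise disjoint subsets of $\tilde\C$: as in the argument for Proposition~\ref{prop-composant}, an intersection would place $\hat z$ and some $\hat z+i$ with $i\neq 0$ in a common connected component of $\tilde\C\cap\pi_1^{-1}([-n,n])$ for large $n$, yielding $\hat z+i\in\hat C_{\hat z}$ and contradicting that proposition. I will also use freely that $\hat K\cap\pi^{-1}(\gamma)=\emptyset$ and $\hat K\subset\tilde{\mathcal U}^+\cup\tilde\C$ (by the hypotheses $K\cap\gamma=\emptyset$ and $K\cap\mathcal U^-=\emptyset$), and that $\pi|_{\hat K}\colon\hat K\to K$ is a homeomorphism since $K$ is inessential.

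The geometric core is the identification of the ``bottom'' boundary $T:=\overline{\hat A}\cap\tilde\C$. Since $\pi_1(\hat\gamma)$ is bounded and $\hat A$ is trapped between $\hat\gamma$ and $\hat\gamma+1$, the region $\hat A$ has bounded first coordinate; combined with the compactness of $\C$ (hence the vertical boundedness of $\tilde\C$), the set $T$ is bounded in $\R^2$. Thus every connected component of $T$ sits inside some $C_n=\mathrm{c.c.}_{\hat w}[\tilde\C\cap\pi_1^{-1}([-n,n])]$, and therefore inside a single composant. A Carath\'eodory/prime-end analysis of the simply connected disk $\hat A\cong\mathcal U^+\setminus\gamma$ shows that the $\tilde\C$-boundary arc of $\hat A$ is parametrized from $\hat z$ to $\hat z+1$, so only composants through $\hat z$ and $\hat z+1$ can meet $T$, giving $T\subset\hat C_{\hat z}\cup(\hat C_{\hat z}+1)$. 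Translating, $\overline{\hat A+j}\cap\tilde\C\subset(\hat C_{\hat z}+j)\cup(\hat C_{\hat z}+j+1)$ for every $j\in\Z$.

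The lemma then reduces to showing the confinement $\hat K\subset\overline{\hat A}$, since once this is established one has $\hat K\cap\tilde\C\subset T\subset\hat C_{\hat z}\cup(\hat C_{\hat z}+1)$ and hence the composants that $\hat K$ meets have indices in $\{0,1\}$, contradicting $k'\geq 2$. To prove the confinement, set $J=\{j\in\Z:\hat K\cap(\hat A+j)\neq\emptyset\}$; by hypothesis $0\in J$. Any two indices $j,j+1\in J$ would require $\hat K$ to pass from $\hat A+j$ to $\hat A+j+1$ through a point of $\hat K\cap\tilde\C$ accessible from both strips, which by the translated $T$-inclusion is forced into the common composant $\hat C_{\hat z}+j+1$. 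Using the bijectivity of $\pi|_{\hat K}$, the components of $\hat K\cap(\tilde{\mathcal U}^+\setminus\pi^{-1}(\gamma))$ are in one-to-one correspondence with the components of $K\cap(\mathcal U^+\setminus\gamma)$, and the absence of winding of $K$ in $\A$ forces the resulting chain of components-and-bridges to collapse, yielding $J=\{0\}$. I expect this last step to be the main obstacle: ruling out an itinerary of the form $\hat A\rightsquigarrow\hat C_{\hat z}+1\rightsquigarrow\hat A+1\rightsquigarrow\hat C_{\hat z}+2\rightsquigarrow\cdots$ needs careful combinatorial bookkeeping of how a homeomorphic lift of an inessential continuum can thread through the composants of $\tilde\C$.
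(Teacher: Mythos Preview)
Your argument has two genuine gaps, and the overall strategy is inverted relative to what the lemma actually needs.

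First, the claim that $T=\overline{\hat A}\cap\tilde\C\subset \hat C_{\hat z}\cup(\hat C_{\hat z}+1)$ is not established and is in fact suspect. You correctly observe that each \emph{connected component} of $T$ lies in a single composant, but for a non--compactly generated circloid the composants are typically dense (this is the analogue of the classical fact for indecomposable continua), so $T$ can meet uncountably many distinct composants, not just the two through $\hat z$ and $\hat z+1$. The ``Carath\'eodory/prime-end analysis'' you invoke parametrizes prime ends, not boundary points; distinct prime ends can have principal sets lying in many different composants, so this step does not go through.

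Second, the ``confinement'' $\hat K\subset\overline{\hat A}$ that you try to prove is essentially the \emph{conclusion} of Proposition~\ref{prop-levacotado}, which is deduced \emph{from} the present lemma (and even there one only gets that $\hat K$ meets at most three consecutive translates of $\hat A$, not one). So your reduction is circular, and you yourself flag the last step as the main obstacle without resolving it.

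The paper's proof avoids $\hat A$ entirely and is much more direct. Assuming $k'>k+1$, one picks compact connected pieces $\Lambda_k\subset\hat C_{\hat z}+k$ and $\Lambda_{k'}\subset\hat C_{\hat z}+k'$ joining $\hat z+k$ (resp.\ $\hat z+k'$) to $\hat K$; by Proposition~\ref{prop-composant} neither contains $\hat z+k+1$. The set $\Gamma=(\hat\gamma+k)\cup\Lambda_k\cup(\hat\gamma+k')\cup\Lambda_{k'}\cup\hat K$ is then a closed connected set disjoint from $\tilde{\mathcal U}^-$, and a high horizontal segment $H$ from $\hat\gamma+k$ to $\hat\gamma+k'$ lies in a component $U^+$ of $\Gamma^c$ different from the one containing $\tilde{\mathcal U}^-$. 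Since $\hat\gamma+k+1$ meets $H$, the point $\hat z+k+1$ lies in $U^+$ and hence is not accumulated by $\tilde{\mathcal U}^-$, contradicting that $\C$ is a circloid. The key idea you are missing is this separation argument exploiting the circloid property $\partial\mathcal U^-=\partial\C$.
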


\begin{proof} Assume otherwise. Without loss of generality we can assume that $k' >k$.

\ve{By the definition of $\hat{C}_{\hat{z}}$ we can consider continua $\Lambda_k \en \hat C_{\hat z} + k$ containing $\hat z + k$
and intersecting $\hat K$, and $\Lambda_{k'} \en \hat C_{\hat z} + k'$ containing $\hat z + k'$
and intersecting $\hat K$. Furthermore, as $\C$ is not compactly generated, Proposition \ref{prop-composant} implies that non of them contain $\hat z+k+1$.}

\smallskip

Let $\hat{\gamma}$ be the lift of $\gamma$ containing $\hat{z}$. We have that $\Lambda_k \cap (\hat \gamma + k) = \{\hat z+k\}$ and $\Lambda_k \cap (\hat \gamma + j) = \emptyset$ for every $j\in\Z\setminus\{k\}$, and the symmetric conditions hold for $\Lambda_{k'}$. See Figure \ref{figtopcirc}.

\tiny

\begin{figure}[ht]\begin{center}
 \psfrag{hatzk}{$\hat{z}+k$}\psfrag{hatzk+1}{$\hat{z}+k+1$}\psfrag{hatzk'}{$\hat{z}+k'$}
 \psfrag{hatgammak}{$\hat{\gamma}+k$}\psfrag{hatgammak+1}{$\hat{\gamma}+k+1$}\psfrag{hatgammak'}{$\hat{\gamma}+k'$}
 \psfrag{Lambdak}{$\Lambda_k$}\psfrag{Lambdak'}{$\Lambda_{k'}$}\psfrag{hatK}{$\hat{K}$}
 \psfrag{H}{$H$}

\centerline{\includegraphics[height=7cm]{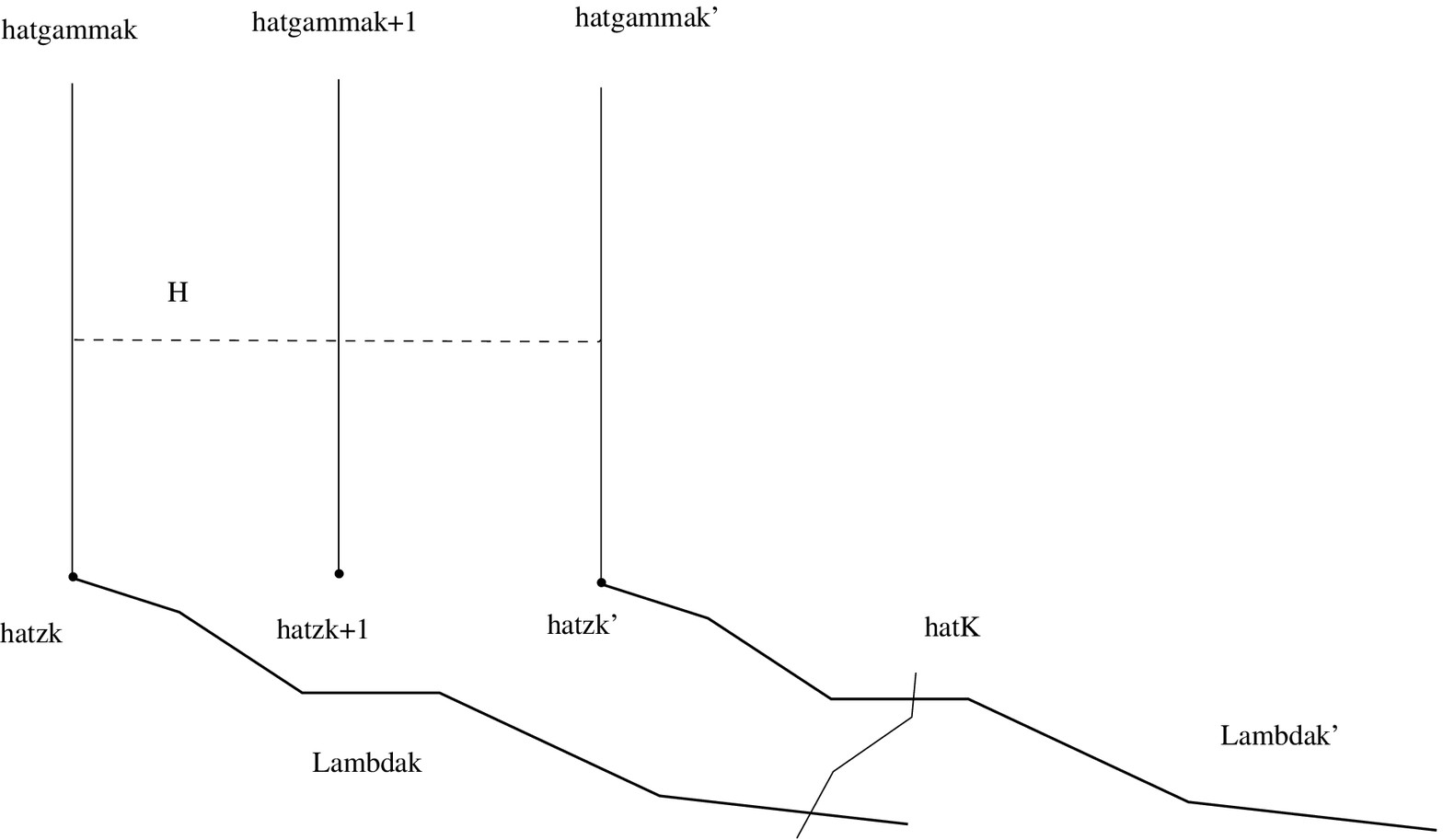}}
\caption{Proof of Lemma \ref{lema-cuantacomposent}.}\label{figtopcirc}
\end{center}\end{figure}

\normalsize

Let $\Gamma= (\hat \gamma + k) \cup \Lambda_k \cup (\hat \gamma + k') \cup \Lambda_{k'}\cup\hat{K}$, which is a closed and connected set. Further, consider
an horizontal segment $H\subset\tilde{\mathcal{U}}^+$ whose endpoints are contained one in $\hat{\gamma}+k$, the other one in $\hat{\gamma}+k'$, and there
are no other intersection between $H$ and $\Gamma$. Notice that this can be easily constructed since the vertical coordinate of points in $\tilde{\C}$ are uniformly bounded.

\smallskip

As $\Gamma\cap\tilde{\mathcal{U}}^-=\emptyset$, we have that $\tilde{\mathcal{U}}^-$ is contained in one connected component of $\Gamma^c$, that we call $U^-$.
Moreover, $H$ must be contained in a different connected component of $\Gamma^c$, as any curve from $H$ to $-\infty$ which does not intersect $\Gamma$,
would allow to separate $\Gamma$ into two connected components, one containing $\hat{\gamma}+k$ and another one containing $\hat{\gamma}+k'$.
We call the connected component of $\Gamma^c$ containing $H$ in its closure by $U^+$.

Due to our assumption, we have that $\hat{\gamma}+k+1$ intersects $H$. Therefore, $\hat z + k +1$ is in the interior of $U^+$ and therefore is not accumulated by $\tilde \cU^-$ which contradicts that $\tilde \C$ is the lift of a circloid.

\end{proof}

Now we are ready to prove Proposition \ref{prop-levacotado}.

\begin{proof}[Proof of Proposition \ref{prop-levacotado}]  Working with $\hat{\gamma}$ as before, we can assume without loss of generality that the closure of $\hat A$ contains both $\hat \gamma$ and $\hat \gamma +1$.

\smallskip

We will show that if a connected component $\hat K$ of $\pi^{-1}(K)$ intersects $\hat A + k$ and $\hat A + k'$ for some $k \neq k'$ then it must intersect either $\hat C_{\hat z} +k$ or $\hat C_{\hat z} + k +1$. Thus, Lemma \ref{lema-cuantacomposent} implies that $\hat{K}$ meets only finitely many lifts of $\hat{A}$ (in fact, at most three consecutive lifts), which implies that $\pi^{-1}(K)\cap\hat{A}$ is bounded.

\smallskip

Without loss of generality, we assume that $\hat K$ intersects $\hat A$ and $\hat A + k$ for some $k \neq 0$ and assume by contradiction that $\hat K$ does not intersect $\hat C_{\hat z}$ nor $\hat C_{\hat z} +1$. Choose $\eta$ a curve contained in $\hat A +k$ landing at point $y \in  \hat K$  (recall that $\hat{K} \cap \gamma =\emptyset$). Choose also a point $x\in \hat K \cap \hat A$.

\smallskip

Recall that we denote by $\pi_1: \R^2 \to \R$ the projection onto the first coordinate. By Proposition \ref{prop-composant} one has that $\pi_1(\hat C_{\hat z})$ is unbounded, and we assume without loss of generality that it  \ro{has no upper bound.} 
Notice that $\pi_1$ is bounded both on $\hat \gamma$ and $\eta$.

\smallskip

Choose a very large $r>0$ and consider a vertical line $v_r=\pi_1^{-1}(r)$ which intersects $\hat C_{\hat z}$ and $\hat C_{\hat z} + 1$ in points $w_0^r$ and $w_1^r$ respectively. Choose a non-separating continua $\Lambda_0^r$ in $\hat C_{\hat z}$ containing $\hat z$ and $w_0^r$ and similarly consider $\Lambda_1^r \en \hat C_{\hat z} +1$ containing $\hat z+1$ and $w_1^r$, which can be done due to the arguments we did before. Define $I_r\subset v_r$ as the segment joining
$w_0^r$ with $w_1^r$.

\smallskip

Let $\Gamma_r = \hat \gamma \cup \Lambda_0^r \cup (\hat  \gamma +1) \cup \Lambda_1^r \cup I_r$. Then, by the same argument we did before, one can consider $U^+(r)$ as the connected component of $\Gamma_r^c$ containing an horizontal segment $H$ joining $\hat{\gamma}$ and $\hat{\gamma}+1$. \ro{One can observe that by construction $\bigcup_{r>0}U^+(r)\supset \hat{A}$, as every point $u$ in $\hat{A}$ can be joined to a point in $H$, with a compact
arc $J$, so that for $r_u$ large enough we have that $J\cap\textrm{pr}_1^{-1}([r_u,+\infty))=\emptyset$, so $J\subset U^+(r)$
for $r\geq r_u$.
Thus, for every $r$ big enough, we find a point of $\hat{K}\cap\hat{A}$ in $U^+(r)$}. Therefore, as $\hat{K}$ is compact and connected, there exits $r_0\in\R$, so that $\hat{K}\subset U^+(r)$ for every $r>r_0$. Notice, that we do not claim
that $U^+(r)\subset\hat{A}$, which is false in general.

\smallskip

On the other hand, as $\eta$ can not intersect $H$, one can see that $\eta$ meets $U^+(r)^c$ for all $r>r_0$. Thus,
if one considers $r'>r_0$ so that $\eta\cap v_{r'}=\emptyset$ (which can be done as $\pi_1(\eta)$ is bounded), we have that $U^+(r')\cap \eta=\emptyset$, otherwise $\eta$ intersects $\Gamma_{r'}\setminus v_{r'}$, which is imposible by construction. This is a contradiction as $\eta\cap\hat{K}\neq\emptyset$ and $\hat{K}\subset U^+(r')$.

\end{proof}

%


\subsection{Prime-end compactification}\label{ss.primeend}

Consider a homeomorphism $f:\A \to \A$ which we can compactify to a homeomorphism $\hat f: S^2 \to S^2$ by adding two fixed points at infinity. In our context, there is a global attractor $\cC$ in $\A$ which is a circloid, this implies that the points at infinity are sources for $\hat f$ and the boundary of their basins coincide with $\partial \C$.

Let $\cU^+$ and $\cU^-$ be the connected components of $\A \setminus \cC$ (which are unbounded).  Denote as $\tilde \cU^\pm$ their lifts to the universal cover $\R^2$ which are connected sets. Let $F: \R^2 \to \R^2$ be a lift of $f$ to $\R^2$, it follows that $F \circ T = T \circ F$ where $T$ is any integer translation in the first coordinate.

We denote as $\hat \cU^\pm = \cU^\pm \cup \{\pm \infty\}$ the corresponding components in $S^2$. These are $f$ (resp. $\hat f$) invariant simply connected open sets and the dynamics \ro{coincides with that of} the basin of a source in each $\hat \cU^\pm$. We introduce here some very basic facts from prime-end theory used in this paper and refer to the reader to \cite{Mather,KLCN,Matsumotoprimeends} or \cite[Section 2.2]{kororealizacion} for more details and references.

The prime end compactification of $\hat \cU^\pm$ is a closed topological disk $U^\pm \cong \mathbb{D}^2$ obtained as a disjoint union of $\hat \cU^+$ and a circle with an appropriate topology (see \cite{Mather}).

If one lifts the inclusion $\cU^\pm \hookrightarrow \hat \cU^\pm \setminus \{\pm \infty \}$ one obtains a homeomorphism $p^\pm : \tilde \cU^\pm \to \HH^2$, and by considering $\hat F^\pm$ the homeomorphism of $\HH^2$ induced by $F$ on $\tilde \cU^{\pm}$  (i.e. such that $p^\pm \circ F = \hat F^{\pm} \circ p^{\pm}$) one sees that $\hat F^\pm$ extends to a homeomorphism of the closure $\textrm{cl}[{\HH^2}]$ in $\R^2$ and still commutes with horizontal integer translations. This allows one to compute the \emph{upper and lower prime end rotation numbers} of $\cC$ (see \cite{kororealizacion} for more details). However, we shall not use this, but just use the following facts about $\hat F^\pm$ and its relation with $F$.

\begin{itemize}

\item The map $\hat F^\pm$ restricted to $\partial \HH^2 \cong \R$ is the lift of a circle homeomorphism where the horizontal integer translations act as deck transformations.

\end{itemize}

 We finish with a last topological property for the Prime-end compactification. Let $\mathcal{U}$ be a topological disk bounded by a continuum $\mathcal{C}$ contained in some surface. For any curve $\gamma:[0,1]\to\mathcal{U}\cup\mathcal{C}$, with $\gamma(t)\in\mathcal{\C}$ iff $t=0$, we have that the corresponding curve $\eta:(0,1]\to\D$ of $\gamma|_{(0,1]}$ admits a unique continuous extension to a curve $\overline{\eta}:[0,1]\to\D$, with $\overline \eta(0)\in\partial\D$.


\section{Attracting circloids and entropy}\label{chapterposent}

In this section we give a proof of Theorem A, stating that an attracting circloid with two different rotation numbers \ve{for a map $f$ has a rotational horseshoe associated to some power $f^{n_0}$}. We first present a proof Theorem A. Then, in Section \ref{ss.teoC} we show how the hypothesis in Theorem A can be relaxed to obtain
a more general statement, see Theorem \ref{t.generalA}, from which we can obtain Theorem C.

\smallskip

To fix the context, we introduce the following hypothesis:

\begin{itemize}

\item[($\mathrm{GA}$)] $f: \A \to \A$ is an orientation preserving homeomorphism of the infinite annulus $\A = \mathbb{S}^1 \times \R$ such that it has a global attractor $\mathcal{C}$ which is a circloid and the rotation set of $f$ restricted to $\mathcal{C}$ is a non-trivial interval.

\end{itemize}

Theorem A states that if $f$ verifies ($\mathrm{GA}$) then \ve{there is a rotational horseshoe for some power
$f^{n_0}$}. Notice that by Theorem \ref{teo-circindec}, property ($\mathrm{GA}$) implies that the circloid $\mathcal{C}$ must be non compactly generated.

\subsection{Some previous definitions}

Chose $\mathcal{A}\subset \A$ any annular \az{neighbourhood} of $\cC$ (i.e. homeomorphic to $\mathbb{S}^1\times [-1,1]$ containing $\cC$ in its interior) so that $f(\mathcal{A})\subset\mathcal{A}$. Since $\cC$ is a global attractor, we have $\C=\bigcap_{n\in\N}f^n(\mathcal{A})$.

Denote by $\mathcal{U}^+$ and $\mathcal{U}^-$ the connected components of $\A \setminus \C$ whose projections into the second coordinate is not bounded from above (resp. below) and by $\partial^+ \mathcal{A}$ and $\partial^-\mathcal{A}$ the connected components of $\partial \mathcal{A}$, contained in
$\mathcal{U}^+$ and $\mathcal{U}^-$ respectively.

\smallskip

Given any essential annulus $\mathcal{A}$ in $\A$, with boundary components $\partial^- \mathcal{A}$ and $\partial^+ \mathcal{A}$, we say that a continuum $D$  \emph{joins the boundaries of} $\cA$ if it verifies the following conditions:

\begin{enumerate}
\item $D\subset \mathcal{A}$ and it intersects both boundaries, i.e. $D\cap\partial^+ \mathcal{A}\neq\emptyset$, $D\cap\partial^- \mathcal{A}\neq\emptyset$.

\item $D$ is inessential (i.e, it is contained in a topological disk).
\end{enumerate}

Let $D_0$ and $D_1$ be two disjoint continua in $\cA$ joining the boundaries. It follows that $\cA \setminus (D_0 \cup D_1)$ has at least one connected component $R$ which contains a curve joining the boundaries of $\cA$. Such a component must verify that its closure intersects both $D_0$ and $D_1$ and it will be called a \emph{rectangle adapted} to $D_0$ and $D_1$. It is easy to show that it is an open connected subset of $\cA$ whose boundary (relative to $\cA$) is contained in $D_0 \cup D_1$.

\smallskip

Recall that we have considered $\pi: \R \times \R \to \A=\mathbb{S}^1 \times \R$ the canonical projection where $\mathbb{S}^1$ is identified with $\R/_\Z$. Given an inessential
continuum $D \en \cA$ which joins the boundaries of $\mathcal{A}$, one considers $\hat D$ to be a connected component
\footnote{Notice that since $\cA$ is essential, one has that $\pi^{-1}(\cA)$ is connected.} of $\pi^{-1}(D)$ in $\hat \cA=\pi^{-1}(\cA)$.
One defines \emph{the right} of $\hat D$ to be the (unique) unbounded component of $\hat \cA \setminus \hat D$ accumulating in $+\infty$ in the first coordinate.
One defines \emph{the left} of $\hat D$ symmetrically.

Notice that if $D_0$ and $D_1$ are two disjoint continua joining the boundaries of $\cA$, and $R$ is a rectangle adapted to $D_0$ and $D_1$ then, if $\hat R$ is a connected
component of the lift of $R$, there is a unique connected component of the lift of $D_0$ (resp. $D_1$) such that it intersects the closure of $\hat R$.
Call these components by $\hat D_0$ and $\hat D_1$.



\ve{

\subsection{A criteria for producing rotational horseshoes}

We start with a lemma which guarantees the existence of a rotational horseshoes. Then we prove that under the
hypothesis of Theorem A, we can apply this result. The proof of the lemma is given by the well known
construction of the \emph{Smale's horseshoe}, which is generalised in \cite{KY}.

\begin{lemma}\label{l.gromov}
Let $\mathcal{A}\subset \A$ be an essential annulus as before, and $h:\mathcal{A} \to \mathcal{A}$ be a continuous map with
$h(\mathcal{A})\subset\textrm{int}(\mathcal{A})$. Denote by $\tilde h$ the lift of $h$ to the universal cover. Assume we have two disjoint continua
$D_0$ and $D_1$ joining the boundaries of $\cA$ such that for some rectangle $R$ adapted to $D_0$ and $D_1$ and some connected component
$\hat R$ of the lift of $R$ there is a positive integer $j$ with the following properties:


\begin{enumerate}

\item if $\hat D_0$ and $\hat D_1$ denote the connected components of the lift of $D_0$ and $D_1$ intersecting the closure of $\hat R$ we have that $\tilde h(\hat{D_0})$ is at the left of the closure of $\hat{R}$,

\item $\tilde h(\hat D_1)$ is at the right of the closure of $\hat{R} + j $.

\end{enumerate}

Then, there exists a $C^0$-neighborhood $\mathcal{N}$ of $h$ in $\textrm{Homeo}_+(\A)$ such that every
$g\in \mathcal{N}$ has a rotational horseshoe so that the associated partition has at least $j+1$ symbols.

\end{lemma}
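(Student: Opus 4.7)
The plan is to realise the full shift on $j+1$ symbols as a topological horseshoe sitting inside $R$, using the two crossing conditions to produce the Markov structure, and then to read off the rotation vectors directly from how the horseshoe pieces are stacked in the universal cover $\hat\cA=\pi^{-1}(\cA)$. The geometric starting point is the observation that the closure of $\hat R$ is bounded on one side by $\hat D_0$ and on the other by $\hat D_1$. Hypotheses (1)--(2) then say that $\tilde h(\hat D_0)$ sits strictly to the left of $\hat R$ while $\tilde h(\hat D_1)$ sits strictly to the right of $\hat R+j$. Since $\tilde h(\overline{\hat R})$ is connected and contained in $\hat\cA$, it must therefore traverse each of the $j+1$ vertical strips determined by $\hat R, \hat R+1,\dots,\hat R+j$, meeting both boundary components of $\hat\cA$ inside each strip in a topologically essential way.

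From this crossing I would extract the horseshoe pieces. For each $i\in\{0,\dots,j\}$, take a connected component $\hat R_i \en \hat R\cap \tilde h^{-1}(\hat R+i)$ whose closure joins the two boundary components of $\hat\cA$, and set $R_i=\pi(\hat R_i)\en R$. By construction the $R_i$ are pairwise disjoint open subsets of $R$, and $h(R_i)$ crosses $R$ in $\cA$ in a Markov way; iterating this one shows that for every finite word $(a_0,\dots,a_{n-1})\in\{0,\dots,j\}^n$ the set $R_{a_0}\cap h^{-1}(R_{a_1})\cap\dots\cap h^{-(n-1)}(R_{a_{n-1}})$ contains a continuum joining the boundaries of $\cA$. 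The Kennedy--Yorke topological horseshoe theorem \cite{KY} then furnishes a compact invariant set $\Lambda \en \overline{R}$ together with a continuous surjection $\xi\colon\Lambda\to\{0,\dots,j\}^{\Z}$ semiconjugating $h|_\Lambda$ to the full shift.

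For the rotational estimate I would set $v_i=(i,0)\in\Z\times\{0\}$. If $x\in\Lambda$ has itinerary $(a_k)_k$ and $\tilde x\in\pi^{-1}(x)$, then by the defining property $\tilde h(\hat R_i)\subset \hat R+i$ (after selecting the appropriate lifts), at each step the horizontal displacement of the orbit differs from $a_k$ by at most the horizontal diameter $\kappa_0$ of $\hat R$; summing gives the uniform bound $\|(\tilde h^n(\tilde x)-\tilde x)-\sum_{k=0}^{n-1}v_{a_k}\|\le \kappa_0$ for every $n\in\N$, so $\Lambda$ is indeed a rotational horseshoe with at least $j+1$ symbols. The $C^0$--robustness is then immediate: conditions (1) and (2) require certain compact sets to lie in certain open subsets of $\hat\cA$, a relation preserved by sufficiently small $C^0$--perturbations, so the same construction applies uniformly to every $g$ in a suitable $C^0$--neighborhood $\mathcal{N}$ of $h$, producing a rotational horseshoe $\Lambda_g$ for $g$ with at least $j+1$ symbols.

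The step I expect to require the most care is the planar separation argument establishing that the essential crossing of $\tilde h(\overline{\hat R})$ really does yield disjoint connected preimage components $\hat R_i$ whose closures meet both boundaries of $\hat\cA$; this is a Janiszewski-type argument and is exactly the content of the Kennedy--Yorke machinery, so I would invoke \cite{KY} rather than reproving it by hand. The rest of the argument — the bookkeeping of the horizontal translations in the cover and the openness of the hypotheses — is then routine.
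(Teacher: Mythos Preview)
Your proposal is correct and follows essentially the same route as the paper: both arguments place $R$ in the Kennedy--Yorke horseshoe hypothesis via the crossing conditions (1)--(2), invoke \cite{KY} to obtain the compact invariant set $\Lambda$ with a semiconjugacy to a full shift on at least $j+1$ symbols, and then read off the rotational estimate from the fact that each partition element $\hat S_i$ (your $\hat R_i$) satisfies $\tilde h(\hat S_i)\subset \hat R + v_i$, with the constant $\kappa$ given by the diameter of $\hat R$. Your treatment is in fact slightly more explicit than the paper's --- you spell out the geometric crossing picture and the $C^0$-openness of the hypotheses, whereas the paper leaves the latter implicit --- but the substance is the same.
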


\begin{proof}

The proof is given by a simple inspection of \cite{KY}. The hypothesis we have for $D_0$ and $D_1$ implies
that an adapted rectangle $R$ is under the \emph{horseshoe hypothesis} (together with $\A$ and the map $h$),
so applying Theorem 1 in the quoted article, we already have the existence of a compact $h$-invariant set $\Lambda\subset\A$
for which  the first condition of the definition of rotational horseshoe is verified (See Figure \ref{shift}).
As noticed by the authors, the semiconjugacy is constructed by using a partition $S_0,\dots,S_{m-1},\ m-1\geq j$
by a finite pairwise disjoint compact sets (which relative to $\Lambda$ are open). Moreover, the construction in our particular case implies that this partition
can be considered so that

\begin{enumerate}

\item for every $i=0,\dots,m-1$ we have a lift $\hat{S}_i$ of $S_i$ in $\hat{R}$,

\item $\tilde{h}(\hat{S}_i)\subset \hat{R}+v_i$ for some integer vector $v_i=(l_i,0)$, where $l_0\leq 0$ and $l_{m-1}>j$.

\end{enumerate}

As the semiconjugacy is constructed by the itinerary function $\xi$ associated to $S_0,\dots,S_{m-1},$
one can deduce by a simple induction that given any point $\hat{x}\in\pi^{-1}(\Lambda)\cap\hat{R}$ lift of $x$, we have
$$\| (\tilde{h}^n(\hat{x})-\hat{x})-\sum_{i=0}^{n-1}v_{\xi(x)(i)} \|<\kappa,$$
where $\kappa$ is the diameter of $\hat{R}$. This implies that $\Lambda$ is a rotational horseshoe.

\end{proof}

}

In order to prove Theorem A, the crucial idea is the following: using the fact that the dynamics is given
on a circloid, and that it is an attractor, we will construct a sort of stable manifolds for some periodic points
$p_0$ and $p_1$, given by two continua $C_0$ and $C_1$, so that they have to intersects both components $\mathcal{U}^+$ and $\mathcal{U}^-$.
These continua will play the role of $D_0$ and $D_1$ in the hypothesis of the last lemma, and this will provide the rotational horseshoe. 

\smallskip

We see in the next lema how the existence of the continua as above allow us to use the previous lemma.

\begin{lemma}\label{l.rotationentropy}
Let $f:\A\to \A$ verifies $(\mathrm{GA})$ and assume that there exist two periodic points $p_0,\ p_1$ with different rotation numbers and two contractible continua $C_0,\ C_1$ containing $p_0,\ p_1$ respectively, such that for $i=0,1$
\begin{enumerate}
\item $f^{n_i}(C_i)\subset C_i$ where $n_i$ is the period of $p_i.$
\item $C_i$ is inessential and intersects both boundaries of $\mathcal{A}.$
\end{enumerate}
\ve{Then, $f^{n_0}$ has a rotational horseshoe for some $n_0\in\N$. Moreover, there exists a $C^0$-neighborhood $\mathcal{N}$ of $f$
in $\textrm{Homeo}_+(\A)$ so that for any $h\in\mathcal{N}$ we have that $h^{n_0}$ has a topological horseshoe}.
\end{lemma}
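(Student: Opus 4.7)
The plan is to apply Lemma~\ref{l.gromov} to $h=f^N$ for a suitably chosen integer $N$, taking $D_0=C_0$ and $D_1=C_1$.

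First, I would fix $N$ as a common multiple of $n_0$ and $n_1$, so that $f^N(p_i)=p_i$ and $f^N(C_i)\subset C_i$ for $i=0,1$. Choose a lift $F$ of $f$, a lift $\hat p_i$ of $p_i$, and let $\hat C_i$ be the connected component of $\pi^{-1}(C_i)$ containing $\hat p_i$. Since $C_i$ is inessential in $\A$, the lift $\hat C_i$ is bounded and distinct integer translates of $\hat C_i$ are pairwise disjoint. Writing $F^N(\hat p_i)=\hat p_i+(a_i,0)$ with $a_i=N\rho_i\in\Z$, the forward invariance of $C_i$ together with the connectedness of $F^N(\hat C_i)$ force $F^N(\hat C_i)\subset \hat C_i+(a_i,0)$. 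We may assume $\rho_0<\rho_1$. By enlarging $N$ we arrange $a_1-a_0\geq 3$, and by replacing $F^N$ with $T^m\circ F^N$ (which is also a lift of $f^N$) for a suitable integer $m$, we shift both $a_i$ to $a_i+m$; the inequality $a_1-a_0\geq 3$ ensures one can choose $m$ so that $a_0+m\leq -1$ and $a_1+m\geq 2$. Rename these shifted integers $a_0,a_1$ and call the corresponding lift of $f^N$ simply $\tilde h$.

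Assuming for the moment that $C_0\cap C_1=\emptyset$, pick a rectangle $R$ adapted to the pair $(C_0,C_1)$ in $\cA$. Replacing $\hat C_1$ by an integer translate of itself, we may place $\hat C_0$ and $\hat C_1$ as the left and right bounding continua of some lift $\hat R$ of $R$ in $\pi^{-1}(\cA)$. With $j=1$, condition~(1) of Lemma~\ref{l.gromov} holds: $\tilde h(\hat C_0)\subset \hat C_0+(a_0,0)$ is a translate of $\hat C_0$ by $a_0\leq -1$ units, so it lies in the left unbounded component of $\pi^{-1}(\cA)\setminus \mathrm{cl}(\hat R)$. Similarly, condition~(2) holds: $\tilde h(\hat C_1)\subset \hat C_1+(a_1,0)$ is a translate of $\hat C_1$ by $a_1\geq 2$ units to the right, hence lies strictly to the right of $\hat C_1+(1,0)$, which is the right bounding continuum of $\hat R+(1,0)$. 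Lemma~\ref{l.gromov} applied to $\tilde h$ then produces a rotational horseshoe for $f^N$ together with a $C^0$-neighborhood $\mathcal{N}$ of $f$ in $\textrm{Homeo}_+(\A)$ such that every $g\in\mathcal{N}$ admits a rotational horseshoe for $g^N$, as claimed.

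The hard part is ensuring disjointness of $C_0$ and $C_1$, which is not explicitly assumed. I expect this to be resolved in one of two ways: either by refining one of the continua --- for instance replacing $C_1$ with the decreasing intersection $\bigcap_{k\geq 0}f^{kN}(C_1)$, which is still a continuum containing $p_1$ and joining the boundaries of $\cA$ provided it does not collapse to $\{p_1\}$, so its disjointness from $C_0$ can be forced by choosing the refinement small enough around $p_1$ --- or by appealing to the specific construction of $C_0,C_1$ in the wider proof of Theorem~A (as \emph{stable-type sets} arising from hyperbolic approximations of distinct periodic orbits), which are disjoint by construction when the local sizes of the approximating hyperbolic pieces are taken small enough.
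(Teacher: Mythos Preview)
Your overall strategy coincides with the paper's: pass to a common iterate, lift, and feed the resulting configuration into Lemma~\ref{l.gromov}. The problem is that you label the disjointness of $C_0$ and $C_1$ the ``hard part'' and propose ad hoc fixes, when in fact it drops out of what you have already computed. You showed $\tilde h(\hat C_i)\subset \hat C_i+(a_i,0)$ with $a_0\neq a_1$ and each $\hat C_i$ bounded. If $C_0\cap C_1\neq\emptyset$ then some translate $\hat C_1+(k,0)$ meets $\hat C_0$; applying $\tilde h^n$ gives that $\hat C_0$ meets $\hat C_1+(k+n(a_1-a_0),0)$ for every $n\geq 0$, contradicting boundedness of $\hat C_0$. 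This is precisely the paper's one-line justification (``they are both forward invariant for $g$ and have different rotation vectors''). Your proposed alternatives are not just unnecessary but unreliable: the nested intersection $\bigcap_{k\geq 0}f^{kN}(C_1)$ has no reason to still reach $\partial\mathcal{A}$, and invoking the later construction from Theorem~A is circular, since the present lemma is stated for abstract $C_i$.

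There is a second, smaller gap. You set $D_i=C_i$, but the hypotheses do \emph{not} say $C_i\subset\mathcal{A}$ (only that $C_i$ meets both boundary circles; the paper even remarks on this right after the statement), so $C_i$ need not ``join the boundaries of $\mathcal{A}$'' as Lemma~\ref{l.gromov} requires. The paper handles this by observing that $C_i\cap\mathcal{A}$ is forward invariant under the common iterate (since $\mathcal{A}$ is), and then extracting a subcontinuum $D_i\subset C_i\cap\mathcal{A}$ joining the two boundary components. One then applies Lemma~\ref{l.gromov} to the $D_i$, using the ambient bounded lifts $\hat C_i\supset\hat D_i$ to push $\tilde h^n(\hat D_i)$ far to the left and right for $n$ large. (In particular the paper iterates further rather than carefully tuning $a_0,a_1$; your shift-of-lift trick is fine, but you should allow $|a_i|$ large relative to the diameters of $\hat C_0,\hat C_1$ rather than just $a_0\leq -1$, $a_1\geq 2$.)
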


We remark that we are not assuming that the sets $C_i$ are contained in $\mathcal{A}$, so we can not consider them as joining boundary componentes of $\cA$.

\begin{proof} Consider an iterate $g$ of $f$ and a lift $G$ to the the universal cover $\tilde{\mathcal{A}}$  so that both $p_0$ and $p_1$ are fixed and their
lifts $\tilde p_0$ and $\tilde p_1$ verify $G(\tilde p_0)=\tilde p_0-j$ and $G(\tilde p_1)=\tilde p_1+l$ for some positive integers $j,\ l$ (i.e., $p_0$ rotates negatively
and $p_1$ rotates positively).

As $C_i$ and $\mathcal{A}$ are forward invariant by $f^{n_i}$ (and therefore also for $g$) we have for $i=0,1$ that $g(C_i\cap\mathcal{A})\subset C_i\cap \mathcal{A}.$
Further, as $C_i$ intersects both boundaries of $\mathcal{A}$ and $p_i$ are contained in the interior of $\cA$ there exist some continua $D_i\subset C_i$
in $\mathcal{A}$ for $i=0,1$, joining the boundary componentes of $\mathcal{A}$, see figure \ref{shift} (for a proof of this folklore topological fact, see
for instance Theorem 14.3 in \cite{newtopolcont}).

\smallskip

We pick now some rectangle $R$ adapted to $D_0,\ D_1$, and  $\hat R$ a connected component of the lift of $R$. Let
$\hat{C}_0$ and $\hat{C}_1$ be the lifts of $C_0$ and $C_1$, containing $\hat{D}_0$ and $\hat{D}_1$ as defined above.
It is easy to see that the sets $C_0$ and $C_1$ must be disjoint, as they are both forward invariant for $g$ and have different rotation vectors.

As both $\hat{C}_0$ and $\hat{C}_1$ have bounded diameter, and rotate negatively and positively, we must
have for some sufficiently large $n\in\N$ that

\begin{itemize}

\item $G^n(\hat D_0)$ is at the left $\hat R$,

\item $G^n(\hat D_1)$ is at the right of $\hat R+1$

\end{itemize}

Lemma \ref{l.gromov} now implies that \ve{$g^n$ has a rotational horseshoe, so it does a power of $f$. Furthermore,
as the configuration above reminds for small perturbations of $g$, we obtain the the same result in a $C^0$-neighborhood of $f$.}

\end{proof}

\tiny

\begin{figure}[ht]\begin{center}
 \psfrag{C0}{$C_0$}
 \psfrag{D0}{$D_1$}
 \psfrag{C1}{$C_1$}
 \psfrag{D1}{$D_0$}
 \psfrag{R}{$R$}
 \psfrag{Rn}{$R$}
 \psfrag{Rn1}{$R+1$}
 \psfrag{C1k}{$C_1+k$}
 \psfrag{C0k}{$C_0-j$}
 \psfrag{FkR}{$G^n(R)$}
 \centerline{\includegraphics[height=7cm]{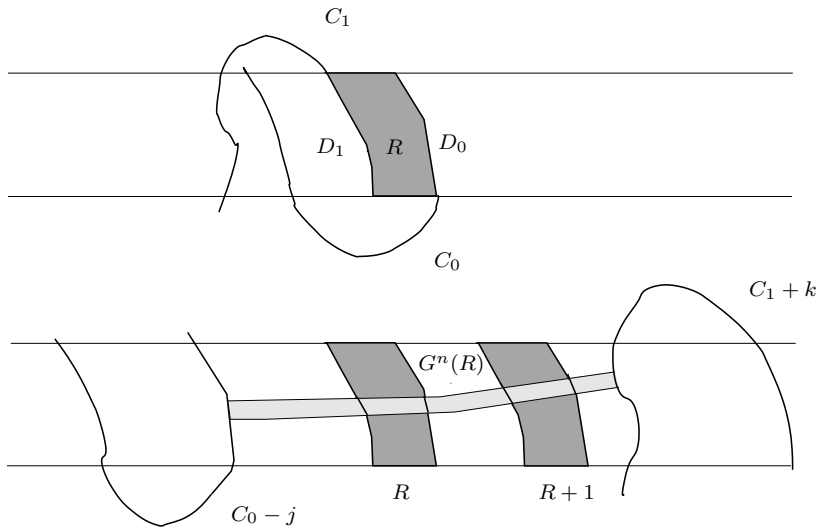}}
\caption{The rotational horseshoe.}\label{shift}
\end{center}\end{figure}

\normalsize

\subsection{A first reduction}

The next result, whose importance we believe transcends the context, will be proved in the next subsection. We will use it here in order to complete the proof of Theorem A.

\begin{thm}\label{t.continuum}
Let $f:\A\to \A$ verifies $(\mathrm{GA})$ and let $p\in\partial \C$ be a periodic point. Then, there exist an \az{inessential} continuum $C_p$ containing $p$ such that $f^{n_p}(C_p)\subset C_p$ where $n_p$ is the period of $p$ and $C_p \cap \partial\mathcal{A}\neq\emptyset.$
\end{thm}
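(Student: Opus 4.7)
The plan is to obtain $C_p$ as a Hausdorff limit of stable arcs of a sequence of $C^0$-small, localised perturbations of $f$ near $p$, exploiting that $p$ lies on the boundary of the attractor $\C$. Replacing $f$ by $f^{n_p}$, I assume $p$ is fixed. Since $\C$ is a circloid and $p\in\partial\C$, the point $p$ lies in the frontier of at least one of the unbounded components of $\A\setminus\C$; say $p\in\partial\cU^+$. I fix a closed topological disk $D\subset\mathcal{A}$ containing $p$ in its interior, whose boundary meets $\partial^+\mathcal{A}$ along an arc, and with $D\setminus\{p\}\subset\cU^+$. The disk $D$ is inessential in $\A$, so any continuum contained in $D$ is automatically inessential; the continuum $C_p$ will be constructed inside $D$, which settles inessentiality for free.

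For each $\varepsilon>0$ I produce $g_\varepsilon\in\textrm{Homeo}_+(\A)$ with $\|g_\varepsilon-f\|_{C^0}<\varepsilon$ such that $g_\varepsilon=f$ outside a neighbourhood $V_\varepsilon\subset D$ of $p$ with $\mathrm{diam}(V_\varepsilon)\to 0$, and such that $p$ is a topological saddle fixed point of $g_\varepsilon$ with a local stable branch $\sigma_\varepsilon\subset V_\varepsilon\cap\overline{\cU^+}$ issuing from $p$. This is done by composing $f$ near $p$ with a local saddle model supported in $V_\varepsilon$. Pulling back, the arcs $g_\varepsilon^{-n}(\sigma_\varepsilon)\subset W^s(p,g_\varepsilon)$ grow monotonically; outside $V_\varepsilon$ one has $g_\varepsilon^{-1}=f^{-1}$, and since $\C$ is the global attractor and $f(\mathcal{A})\subsetneq\mathcal{A}$, the map $f^{-1}$ pushes points of $\cU^+\cap\mathcal{A}$ away from $\C$, hence toward $\partial^+\mathcal{A}$. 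Following the component of $g_\varepsilon^{-n}(\sigma_\varepsilon)\cap D$ adherent to $p$, one obtains in finitely many steps an arc $\alpha_\varepsilon\subset D$ from $p$ to $\partial^+\mathcal{A}$, whose closure $C_\varepsilon$ is an inessential continuum satisfying $g_\varepsilon(C_\varepsilon)\subset C_\varepsilon$.

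Passing to a subsequence, $C_\varepsilon$ converges in Hausdorff distance to a continuum $C_p\subset D$, which is inessential, contains $p$, and meets $\partial^+\mathcal{A}$ (each property passes to the limit of closed connected sets in $D$). The invariance $f(C_p)\subset C_p$ follows by the standard argument: given $x=f(y)\in f(C_p)$, choose $y_\varepsilon\in C_\varepsilon$ with $y_\varepsilon\to y$; then $g_\varepsilon(y_\varepsilon)\in C_\varepsilon$ and $g_\varepsilon(y_\varepsilon)\to f(y)=x$ by uniform convergence, so $x\in C_p$.

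The main obstacle I foresee is the middle step: guaranteeing that the backward iterates of $\sigma_\varepsilon$ reach $\partial^+\mathcal{A}$ while remaining confined to $D$, rather than spiralling around $\C$ or accumulating on some other part of $\partial\C$ before exiting $D$. This is where the full force of $(\mathrm{GA})$ enters: it forces $\C$ to be non-compactly generated by Theorem \ref{teo-circindec}, and the topological control over inessential continua meeting such circloids provided by Proposition \ref{prop-levacotado} rules out the pathological backward behaviour and keeps the growing arcs inside $D$ until they first meet $\partial^+\mathcal{A}$.
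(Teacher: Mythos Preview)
Your proposal shares the paper's overall architecture---perturb $f$ near $p$ to create a saddle whose stable set reaches $\partial\mathcal{A}$, then pass to a Hausdorff limit---but it has a genuine gap at the outset, and the mechanism you invoke to control the stable arcs is not the one that actually works.

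The disk $D$ you postulate does not exist. You ask for $p\in\mathrm{int}(D)$ and $D\setminus\{p\}\subset\cU^+$; this forces $D\cap\C=\{p\}$, so $\{p\}$ would be relatively open in the continuum $\C$, contradicting connectedness (recall $\C$ is essential, hence nondegenerate). For a non-compactly generated circloid every neighbourhood of $p$ meets $\C$ in a complicated set. Consequently your local stable branch $\sigma_\varepsilon$ cannot be arranged to lie in $\overline{\cU^+}$, and there is no reason for its backward images under $g_\varepsilon^{-1}=f^{-1}$ to stay inside any fixed inessential disk; they may wind around the annulus many times, crossing $\C$ and $\cU^-$, before ever reaching $\partial\mathcal{A}$. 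Your final paragraph acknowledges this difficulty but does not resolve it: Proposition~\ref{prop-levacotado} bounds the lift of a \emph{given} inessential continuum disjoint from $\cU^-$, which is precisely the hypothesis you cannot establish a priori for the growing stable arcs.

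The paper's proof controls the size of the stable sets by a completely different, dynamical device. It exploits the existence of a \emph{second} periodic point $q\in\partial\C$ with a different rotation number (available by the realisation theorem, since $\rho_\C(F)$ is nontrivial). One builds a thin sub-annulus $\mathcal{A}_1$ and a short arc $I_q$ through $q$ joining $\partial^+\mathcal{A}_1$ to $\partial^-\mathcal{A}_1$ with the property that, for any $g$ close to $f$, the lift satisfies: $G(\hat I_q)$ lies to the right of $\hat I_q$ and $G^2(\hat I_q)$ to the right of $\hat I_q+1$ (Lemma~\ref{l.arco}). Since the component $W^s_1(p,f_n)$ of the stable manifold in $\mathcal{A}_1$ is forward invariant, it cannot touch a neighbourhood of $I_q$: otherwise applying $F_n$ or $F_n^2$ would push it across $\hat I_q$, violating forward invariance inside the fundamental domain (Lemma~\ref{l.bounded}). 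This rotational barrier yields a uniform horizontal diameter bound on the lifts $\hat W^s_1(p,f_n)$, hence uniform diameter after a fixed number of backward iterates into $\mathcal{A}$, and the Hausdorff limit produces $C_p$. Proposition~\ref{prop-levacotado} plays no role in this argument; it is used later, in the prime-end step (Proposition~\ref{prop-primeend}).
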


Notice that the continuum $C_p$ might not intersect a priori both boundary components of $\cA$. Moreover, although $C_p$ meets $\C^c$, it may happens that $C_p$ intersects only one of the unbounded connected components $\mathcal{U}^+$ and $\mathcal{U^-}$, that is, $C_p\subset (\mathcal{U}^-)^c$ or $C_p\subset(\mathcal{U}^+)^c$.

\smallskip
We now proceed with the proof of Theorem A assuming Theorem \ref{t.continuum}. By Lemma \ref{l.rotationentropy} it is enough to find two periodic points $p_0$ and $p_1$ with different rotation vectors for which $C_{p_0}$ and $C_{p_1}$ intersect both boundary components of $\mathcal{A}$, \ro{which is equivalent to the following condition since $\C$ is a global attractor:}

\begin{equation}\label{eq:continuossalen}
C_{p_0}\cap \mathcal{U}^+\neq\emptyset\mbox{  and }\ C_{p_0}\cap\mathcal{U}^-\neq\emptyset\ ,\ C_{p_1}\cap \mathcal{U}^+\neq\emptyset\mbox{  and }\ C_{p_1}\cap\mathcal{U}^-\neq\emptyset\ .
\end{equation}

\smallskip
We conclude the section by proving the existence of periodic points $p_0$ and $p_1$ so that \eqref{eq:continuossalen} holds.

\smallskip

Let us state the following realisation theorem of \cite{KoroPass} which improves previous results \cite{kororealizacion} and \cite{BG}.
Here is one of the essential points were we use that $\cC$ is irreducible (see \cite{Walker}). Notice that if one wishes to use \cite{BG} instead of \cite{KoroPass} similar results hold but one needs to add the assumption that the circloid $\cC$ in Theorem A has empty interior. 

\begin{thm}[Theorem G of \cite{KoroPass}]\label{t.realizacion}
Let $h: \A \to \A$ be a homeomorphism of the annulus preserving a circloid $\cC$ so that
 $\rho_{\cC}(H)$ is non-singular for any lift $H$ of $h$. Then, every rational point in the rotation set $\rho_{\cC}(H)$
is \az{realised} by a periodic orbit in $\partial\cC$.
\end{thm}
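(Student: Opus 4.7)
The plan is a two-part strategy: use prime-end theory to realize the two endpoints of $\rho_\C(H)$, and a Brouwer/forcing argument (crucially leveraging irreducibility) to realize the interior rational points. I will use the prime-end compactification setup of Subsection~\ref{ss.primeend} applied to each simply connected component $\hat{\mathcal{U}}^\pm$, producing extensions $\hat H^\pm$ whose restrictions to $\partial \HH^2 \cong \R$ are lifts of orientation preserving circle homeomorphisms commuting with integer translations. Their classical rotation numbers $\rho^+,\rho^-\in\R$ are the upper and lower \emph{prime-end rotation numbers} of $\C$.

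The first step is to show that $\rho_{\C}(H) = [\min\{\rho^-,\rho^+\},\max\{\rho^-,\rho^+\}]$. The inclusion $\subseteq$ follows by a sandwiching argument: every orbit in $\partial\C$ can be approximated from both sides by accessible orbits thanks to the circloid identity $\partial\C = \partial\hat{\mathcal{U}}^+ \cap \partial \hat{\mathcal{U}}^-$, and an accessible orbit inherits its rotational behaviour from the corresponding prime-end circle. The reverse inclusion is standard. Hence the endpoints of $\rho_{\C}(H)$ are exactly $\rho^\pm$. If the rational $p/q$ equals one of them, Poincar\'e's theorem on the prime-end circle produces a $q$-periodic prime end, and the Cartwright--Littlewood / Mather accessibility theorem promotes it to an accessible periodic point $x\in\partial\C$ of period $q$. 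The landing curve from $\hat{\mathcal{U}}^\pm$ together with the prime-end rotation number $p/q$ forces the rotation vector of $x$ to be $p/q$.

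For rational $p/q$ strictly in the interior of $\rho_{\C}(H)$, set $G := H^q \circ T_{-p}$, where $T_{-p}$ is horizontal translation by $-p$, and look for a fixed point of $G$ in $\pi^{-1}(\partial\C)$. Suppose no such fixed point exists. Le Calvez's equivariant Brouwer foliation theorem then produces a $G$-invariant oriented foliation near $\pi^{-1}(\partial\C)$ whose leaves are Brouwer lines. The two periodic orbits from the first step (realising $\rho^\pm$) give, under iteration of $G$, orbits in the lift whose first-coordinate displacements drift to $+\infty$ and $-\infty$ respectively. Opposing drifts of this kind against a common transverse Brouwer foliation are ruled out by the forcing theorem of Le Calvez--Tal (or a Franks-type annular fixed-point argument applied inside the prime-end compactification), giving the desired contradiction. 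The resulting fixed point of $G$ projects to a periodic orbit of $h$ on $\partial\C$ of rotation vector $p/q$.

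The main obstacle is precisely this interior case. Walker's example shows that without irreducibility the forcing can fail: an invariant essential subcircle inside $\C$ acts as a barrier for the Brouwer/forcing machinery, preventing interior rational rotation vectors from being realised on $\partial\C$. The proof must therefore exploit the fact that an irreducible circloid contains no proper essential annular subcontinuum, so that any fixed point detected on the prime-end side must correspond to a point of $\partial\C = \partial\hat{\mathcal{U}}^+ \cap \partial\hat{\mathcal{U}}^-$ rather than being trapped in a dynamically isolated interior piece. Transferring the Brouwer-plane forcing in $\hat{\mathcal{U}}^\pm$ across this common boundary is the delicate technical point, and is the step where the circloid hypothesis really pays off.
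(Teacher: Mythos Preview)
The paper does not prove Theorem~\ref{t.realizacion}: it is quoted verbatim as Theorem~G of \cite{KoroPass} and used as a black box in the proof of Proposition~\ref{buenosconexos}. So there is no ``paper's own proof'' to compare against; the relevant question is whether your sketch stands on its own.

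It does not, and the failure is already in Step~1. Your claimed identity
\[
\rho_{\C}(H)=\bigl[\min\{\rho^-,\rho^+\},\max\{\rho^-,\rho^+\}\bigr]
\]
is not true in general for invariant circloids, and the ``sandwiching argument'' you offer does not establish it. The assertion that an accessible orbit ``inherits its rotational behaviour from the corresponding prime-end circle'' is exactly the step that is known to break: the prime-end rotation number records how \emph{prime ends} are permuted, not the asymptotic horizontal displacement of the accessible points themselves. There are classical examples (already in the decomposable setting, and adaptations exist for cofrontiers) where the prime-end rotation number is irrational while the continuum carries fixed points, so accessibility does not transfer rotation data in the way you need. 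Consequently you cannot conclude that the endpoints of $\rho_\C(H)$ are $\rho^\pm$, and the Cartwright--Littlewood step that is supposed to realise those endpoints on $\partial\C$ is left hanging. Since your interior-point argument explicitly feeds on the two endpoint orbits produced in Step~1, the whole scheme collapses.

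For orientation: the argument in \cite{KoroPass} does not go through the dichotomy endpoint/interior at all. It develops a Poincar\'e--Bendixson type theorem for Brouwer translation lines and uses it to show that, for an irreducible annular continuum, an unrealised rational rotation number would force a translation line whose $\omega$-limit behaviour is incompatible with the circloid structure. Irreducibility is used there to rule out the limit set of a translation line being a proper essential annular subcontinuum, which is closer in spirit to your final paragraph than to your Steps~1--2. If you want to repair your approach, the honest route is to drop the prime-end endpoint identification entirely and argue directly with $G=H^q\circ T_{-p}$ and Brouwer theory, but then you are essentially redoing \cite{KoroPass} (or \cite{kororealizacion} in the empty-interior case), and the hard part is precisely making the ``transfer across $\partial\C$'' step rigorous.
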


The idea is to use points which are in $\partial \cC$ but are not accessible from $\cU^+$ and $\cU^-$ so that a connected set which intersects the boundary of $\cA$
will necessarily intersect both boundaries. Recall that a point $x \in \partial \cC$ is accessible if there exists a continuous arc
$\gamma:[0,1] \to \cA$ such that $\gamma([0,1)) \en \cA \setminus \cC$ and $\gamma(1)=x$.

Here we shall use a weaker form of accessibility which, moreover, involves the dynamics of $f$ in the annulus. We will say that a periodic point $p\in \cC$ is \emph{dynamically continuum accessible from above} (resp. \emph{dynamically continuum accessible from below}) if there exist a continuum $C_p$ such that:

\begin{itemize}
\item $p\in C_p$
\item $C_p\setminus \cC$ is non empty and contained in $\cU^+$ (resp. $\cU^-$).
\item $C_p$ is inessential in $\A$.
\item $f^{n_p}(C_p) \en C_p$ for $n_p$ the period of $p$.
\end{itemize}


Using the prime-end theory and the result stated in the paragraph \ref{ss.continua}, one can show the following result.

\begin{prop}\label{prop-primeend}
Let $p$ and $q$ in $\partial \cC$ be periodic points of $f$ which are both dynamically continuum accessible from above (resp. from below). Then, for any lift of $f$ to $\R^2$ both $p$ and $q$ have the same rotation number.
\end{prop}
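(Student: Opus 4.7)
The strategy is to reduce the equality of the two annular rotation numbers to the uniqueness of the rotation number of the lifted prime-end boundary circle. Recall from Section~\ref{ss.primeend} that $p^+\colon \tilde \cU^+ \to \HH^2$ is a homeomorphism, $\hat F^+\colon \HH^2\to\HH^2$ extends continuously to $\textrm{cl}[\HH^2]$, and $\hat F^+|_{\partial \HH^2}\colon \R\to\R$ is a lift of an orientation-preserving circle homeomorphism commuting with integer translations. I will show that $\rho(\hat F^+|_{\partial \HH^2})=\rho(p)$; by the same argument, $\rho(\hat F^+|_{\partial \HH^2})=\rho(q)$, so $\rho(p)=\rho(q)$. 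The case of accessibility from below is symmetric, working with $\cU^-$, $p^-$, and $\hat F^-$.

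\textbf{Setup.} Fix a lift $\tilde p\in \pi^{-1}(p)$ with $F^{n_p}(\tilde p)=\tilde p+(m_p,0)$ so that $\rho(p)=m_p/n_p$, and let $\hat C_p$ be the lift of $C_p$ containing $\tilde p$. Since $C_p$ is inessential and compact, $\hat C_p\subset\R^2$ is compact, and $F^{n_p}(\hat C_p)\subset \hat C_p+(m_p,0)$. Let $\tau$ denote horizontal translation by $(-m_p,0)$ in $\R^2$, and set $\tilde F_p:=\tau\circ F^{n_p}$, which fixes $\tilde p$ and satisfies $\tilde F_p(\hat C_p)\subset \hat C_p$. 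Because $f$ preserves $\cU^+$, $\tilde F_p$ preserves $\tilde \cU^+$ and descends via $p^+$ to $\hat F_p^+:=\hat \tau^+\circ (\hat F^+)^{n_p}$ on $\HH^2$, where $\hat \tau^+$ is the deck transformation corresponding to $\tau$ (and so $\hat\tau^+|_{\partial \HH^2}$ is translation by $-m_p$). This map extends to $\textrm{cl}[\HH^2]$ preserving $\partial \HH^2$.

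\textbf{Invariant compact on the boundary.} Consider $S_p:=p^+(\hat C_p\cap \tilde \cU^+)$, which is nonempty (as $C_p\cap \cU^+\neq\emptyset$) and satisfies $\hat F_p^+(S_p)\subset S_p$. Let $K_p$ be its closure in $\textrm{cl}[\HH^2]$ and $B:=K_p\cap \partial \HH^2$, so that $\hat F_p^+(B)\subset B$. I claim $B$ is nonempty and bounded. For non-emptiness: $\hat C_p\cap \tilde \cU^+$ is open in the connected continuum $\hat C_p$, but not closed (as $\tilde p\in \hat C_p\setminus \tilde\cU^+$), so its closure in $\hat C_p$ contains some $\tilde p'\in \partial\tilde\cU^+$; a sequence $(z_n)\subset \hat C_p\cap \tilde\cU^+$ with $z_n\to \tilde p'$ in $\R^2$ has $p^+(z_n)$ with no accumulation in $\HH^2$ (otherwise $\tilde p'\in \tilde\cU^+$ by $p^+$ being a homeomorphism), so every accumulation point lies in $\partial \HH^2$. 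For boundedness: $\hat C_p\subset \pi_1^{-1}([-N,N])$ for some $N$, so $\hat C_p\cap \tilde\cU^+$ meets only finitely many deck translates of a fundamental domain of $\tilde\cU^+\to\cU^+$; hence $S_p$ lies in finitely many translates of a fundamental domain of $\HH^2$ under the corresponding deck action, whose union meets $\partial\HH^2=\R$ in a bounded interval.

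\textbf{Conclusion and main obstacle.} Set $g:=\hat F_p^+|_{\partial\HH^2}$. Then $g\colon\R\to\R$ is an orientation-preserving homeomorphism and $g(B)\subset B$ with $B$ nonempty compact. Since $g(\min B),g(\max B)\in B$, one has $g(\min B)\ge \min B$ and $g(\max B)\le \max B$; the IVT applied to the continuous function $g(x)-x$ on $[\min B,\max B]$ yields a fixed point of $g$ in $\R$. As $g=\hat\tau^+|_\partial\circ (\hat F^+|_\partial)^{n_p}$ is a lift of a circle homeomorphism with a fixed point, its lifted rotation number vanishes, giving $n_p\,\rho(\hat F^+|_\partial)-m_p=0$ and therefore $\rho(\hat F^+|_\partial)=m_p/n_p=\rho(p)$. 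Repeating the construction with $q$ in place of $p$ yields $\rho(\hat F^+|_\partial)=\rho(q)$, whence $\rho(p)=\rho(q)$. The main subtlety lies in the non-emptiness of $B$, which relies crucially on $p^+$ being a homeomorphism onto $\HH^2$ so that sequences converging to $\partial \tilde\cU^+$ in $\R^2$ leave every compact of $\HH^2$ under $p^+$, and on the connectedness of $\hat C_p$ to produce the accumulation point $\tilde p'$ at $\partial \tilde\cU^+$.
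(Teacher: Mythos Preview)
Your overall strategy is close to the paper's: both arguments push the continua $C_p$ into the prime-end picture and exploit that $\hat F^+|_{\partial\HH^2}$ is a lift of a circle homeomorphism, hence carries a single rotation number. You organize this as a direct computation $\rho(p)=\rho(\hat F^+|_{\partial\HH^2})=\rho(q)$, while the paper argues by contradiction; either way the essential step is the same --- produce a \emph{compact} (in particular horizontally bounded) forward-invariant subset of $\partial\HH^2$ coming from $\hat C_p\cap\tilde\cU^+$.

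The genuine gap is your boundedness step. You assert that $\hat C_p\subset\pi_1^{-1}([-N,N])$ forces $\hat C_p\cap\tilde\cU^+$ to meet only finitely many deck translates of a fundamental domain, and hence that $S_p=p^+(\hat C_p\cap\tilde\cU^+)$ is horizontally bounded in $\HH^2$. This implication is not valid: the homeomorphism $p^+:\tilde\cU^+\to\HH^2$ has no reason to respect horizontal coordinates near the boundary. Concretely, if one builds fundamental domains $\hat A+j$ by cutting along lifts of a landing curve $\gamma$, then each $\hat A+j$ can be \emph{horizontally unbounded} in $\R^2$ (for a non-compactly-generated circloid the portion of $\partial\tilde\cU^+$ ``between'' $\hat z$ and $\hat z+1$ need not be bounded), so a horizontally bounded $\hat C_p$ may still meet infinitely many of them; if instead one uses the naive strip domains $\tilde\cU^+\cap\pi_1^{-1}([j,j+1))$, then their $p^+$-images are fundamental domains of $\HH^2$ whose closures on $\partial\HH^2$ need not be bounded intervals. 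Under either reading, ``$\hat C_p$ bounded in $\R^2$'' does not give ``$K_p\cap\partial\HH^2$ bounded''. Notice also that your argument never uses the hypothesis $C_p\cap\cU^-=\emptyset$, which is a further indication that a key ingredient is missing.

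What is actually needed here is precisely Proposition~\ref{prop-levacotado}: for an inessential continuum $K$ with $K\cap\cU^-=\emptyset$, the set $\pi^{-1}(K)\cap\hat A$ is bounded --- equivalently, a single lift $\hat K$ meets only finitely many $\hat A+j$. The paper devotes Proposition~\ref{prop-composant} and Lemma~\ref{lema-cuantacomposent} to this, using the non-compactly-generated structure of $\cC$. Once you invoke Proposition~\ref{prop-levacotado} (after choosing the landing curve $\gamma$ disjoint from $C_p$, which you should also make explicit), your boundedness of $B$ follows, your non-emptiness argument then goes through, and the remainder of your proof is correct.
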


\begin{proof}

Assume by contradiction that $p$ and $q$ have different rotation numbers for some lift. Considering an iterate $f^j$ and a suitable lift \ro{$G$} of $f^j$ to $\R^2$ we can assume that $\ro{G}(\tilde p) = \tilde p$ and $\ro{G}(\tilde q)= \tilde q + k$ with $k\neq 0$.

\smallskip

Let $C_p$ and $C_q$ be given by the fact that $p$ and $q$ are continuum accessible from above. By definition, we have that they are disjoint and inessential. Thus, we can consider a proper arc $\gamma:[0,+\infty)\to\mathcal{U}^+ \cup \C$ so that
$\gamma(0)= z\in \C$ and $\gamma(t)\in (\C\cup C_p\cup C_q)^c$ for every $t\in(0,+\infty)$ and $\lim_{t\to +\infty}\gamma(t)=+\infty$ (see subsection \ref{ss.continua}).

\smallskip

Let $\tilde{\mathcal{U}}^+=\pi^{-1}(\mathcal{U}^+)$, $\hat{\gamma}$ a lift of $\gamma$, and $\hat{A}$ the lift
of $\pi^{-1}(\mathcal{U}^+\setminus\gamma)$ containing $\hat{\gamma}$ and $\hat{\gamma}+1$ in its boundary. Further,
consider $\hat{C}_p$ and $\hat{C}_q$ the connected components of $\pi^{-1}(C_p)$ and $\pi^{-1}(C_q)$ intersecting $\hat{A}$ respectively, $\hat{K}_p=\hat{C}_p\cap\tilde{\mathcal{U}}^+$ and $\hat{K}_q=\hat{C}_q\cap\tilde{\mathcal{U}}^+$. We have that
$\ro{G}(\hat{K}_p)\subset \hat{K}_p$ and $\ro{G}(\hat{K}_q)\subset \hat{K}_q+k$.

\smallskip

As $\hat{C}_p$ and $\hat{A}$ are in the situation of Proposition \ref{prop-levacotado}, we have that $\hat{C}_p$ intersects only finitely many of the sets $\hat{A}+j,\ j\in\Z$, and the same holds for $\hat{C}_q$.

\smallskip

Consider the map $\ro{H}:\mathbb{H}^2\to\mathbb{H}^2$ induced by $\ro{G}$ and the prime-end compactification of $\mathcal{U}^+$
as stated in sub-section \ref{ss.primeend}, and let $\ro{\theta}:\tilde{\mathcal{U}}^+\to \mathbb{H}^2$ be the induced conjugacy between $\ro{G}|_{\tilde{\mathcal{U}}^+}$ and
$\ro{\hat{H}}|_{\mathbb{H}^2}$. As $\gamma$ lands at an accesible point $z$, we have that
$\eta=\ro{\theta}(\hat{\gamma}\setminus \hat{\gamma}(0))$ can be extended continuously in $t=0$, so that $\eta(0)\in\partial\mathbb{H}^2$ with respect to the usual topology of $\R^2$ (see \ref{ss.primeend}).

\smallskip

Then, we have that the sets $\overline{K}_p=\textrm{cl}[\ro{\theta}(\hat{K}_p)]$ and $\overline{K}_q=\textrm{cl}[\ro{\theta}(\hat{K}_q)]$ are contained in a region of $\textrm{cl}[\mathbb{H}^2]$ between the extended curves $\eta-j_0$ and $\eta+j_1$ for some $j_0,j_1\in\N$. Furthermore, if $\ro{\hat{H}}$ is the continuos extension of $\ro{H}$ to $\textrm{cl}[\HH^2]$, we can assume without loss of generality, that $\ro{\hat{H}}^n(\overline{K}_p)\subset\overline{K}_p$ and $\ro{\hat{H}}^n(\overline{K}_q)\subset \overline{K}_q+nk$ for all $n\in\N$. Let $\ro{h}$ be the restriction of
$\ro{\hat{H}}$ to $\partial \mathbb{H}^2$, which is known to lift an orientation preserving circle homeomorphism as stated
in \ref{ss.primeend}.

\smallskip

Thus we obtain two compact sets $L_p=\overline{K}_p\cap\partial\mathbb{H}^2$ and $L_q=\overline{K}_q\cap\partial\mathbb{H}^2$, so that $\ro{h}^n(L_p)\subset L_p$ and $\ro{g}^n(L_q)\subset L_q+kn$ for all
$n\in\N$, which is impossible, as $\ro{h}$ lifts an orientation preserving circle homeomorphism.

\end{proof}

We are now ready to complete the proof of Theorem A by showing the following proposition.

\begin{prop}\label{buenosconexos}
There exists two periodic points $p_0,p_1$ in $\partial \C$ with different rotation numbers so that $C_{p_0}$ and $C_{p_1}$ satisfy  \eqref{eq:continuossalen}.
\end{prop}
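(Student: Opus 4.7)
The idea is to classify periodic points in $\partial\C$ according to which of the two unbounded complementary components of $\C$ their associated continuum reaches, and then to use Proposition \ref{prop-primeend} to show that only one rotation number can correspond to each of the two ``one-sided'' classes, leaving infinitely many periodic points whose continuum meets both sides.

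First I would use the realisation result, Theorem \ref{t.realizacion}, together with the hypothesis that $\rho_\C(F)$ is a non-trivial interval, to produce an infinite family $\{p_\alpha\}$ of periodic points in $\partial\C$ whose rotation numbers $\alpha$ exhaust $\mathbb{Q}\cap\mathrm{int}(\rho_\C(F))$, so in particular take infinitely many distinct values. For each such $p=p_\alpha$, Theorem \ref{t.continuum} supplies an inessential continuum $C_p\ni p$ with $f^{n_p}(C_p)\subset C_p$ and $C_p\cap\partial\mathcal{A}\neq\emptyset$. Since $\C$ is a global attractor, $\mathbb{A}\setminus\C=\mathcal{U}^+\cup\mathcal{U}^-$, and the two components of $\partial\mathcal{A}$ lie one in $\mathcal{U}^+$ and the other in $\mathcal{U}^-$. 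In particular $C_p\setminus\C$ is non-empty and is contained in $\mathcal{U}^+\cup\mathcal{U}^-$.

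Next I would set up the trichotomy: for each periodic $p$ as above, exactly one of the following holds:
\begin{enumerate}
\item $C_p\setminus\C\subset\mathcal{U}^+$ (so $p$ is dynamically continuum accessible from above);
\item $C_p\setminus\C\subset\mathcal{U}^-$ (so $p$ is dynamically continuum accessible from below);
\item $C_p$ meets both $\mathcal{U}^+$ and $\mathcal{U}^-$, in which case condition \eqref{eq:continuossalen} is automatic because $\C$ is a global attractor, so the inclusion of $C_p$ in $\mathbb{A}$ forces it to intersect $\partial\mathcal{A}$ in each component.
\end{enumerate}
Proposition \ref{prop-primeend} applied twice (once from above, once from below) ensures that all periodic points falling in case (1) share a common rotation number $\alpha_+$, and similarly all those in case (2) share a common $\alpha_-$. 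Thus, among the infinitely many distinct rotation numbers $\alpha\in\mathbb{Q}\cap\mathrm{int}(\rho_\C(F))$ realised by our periodic orbits, all but at most two (namely $\alpha_+$ and $\alpha_-$) must correspond to case (3).

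Finally I would pick two distinct rotation numbers $\alpha_0,\alpha_1\in\mathbb{Q}\cap\mathrm{int}(\rho_\C(F))$ with $\alpha_i\notin\{\alpha_+,\alpha_-\}$, and take $p_0=p_{\alpha_0}$, $p_1=p_{\alpha_1}$; by construction their continua $C_{p_0}$ and $C_{p_1}$ both intersect $\mathcal{U}^+$ and $\mathcal{U}^-$, so that \eqref{eq:continuossalen} holds. The main conceptual step is the trichotomy together with the application of Proposition \ref{prop-primeend}; the technical heart was already done in Proposition \ref{prop-levacotado} (which underlies Proposition \ref{prop-primeend} through the prime end compactification), so the remaining argument here is essentially a counting/pigeonhole reduction. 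I expect no further obstacle beyond checking that $C_p\setminus\C\neq\emptyset$ (which is immediate from $C_p\cap\partial\mathcal{A}\subset\mathcal{U}^+\cup\mathcal{U}^-$) so that the ``accessible from above/below'' dichotomy of Proposition \ref{prop-primeend} can indeed be applied to exclude cases (1) and (2).
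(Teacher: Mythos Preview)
Your proposal is correct and follows essentially the same approach as the paper: use the realisation theorem to produce periodic points in $\partial\C$ with distinct rotation numbers, apply the trichotomy via Theorem \ref{t.continuum}, and invoke Proposition \ref{prop-primeend} to rule out all but two rotation numbers from the one-sided cases. The only cosmetic difference is that the paper picks exactly four rationals (the minimum needed for the pigeonhole) rather than the full infinite family, but the argument is otherwise identical.
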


\begin{proof} Pick four rational points $r_0,r_1,r_2,r_3 \in\rho_{\C}(F)$ with different denominators in their irreducible form (in particular, different from each other). \ve{Using Theorem \ref{t.realizacion} we know that all four are \az{realised} by periodic points $p_i$ in $\partial\C$}, and using Proposition \ref{prop-primeend} we know that at least two of them, say $p_0$ and $p_1$ are not dynamically continuum accessible.

Consider the compact connected sets $C_{p_0}$ and $C_{p_1}$ given by Theorem \ref{t.continuum}, since $p_0$ and $p_1$ are not continuum accessible, it follows directly that equation \eqref{eq:continuossalen} is verified as desired.






\end{proof}



\subsection{Proof of Theorem \ref{t.continuum}}

Let $\partial^+\mathcal{A}$ and $\partial^-\mathcal{A}$ be the two boundaries of $\mathcal{A}.$ Let $\F^+_0$ be a foliation by essential simple closed curves in the upper connected component of $\overline{\mathcal{A}\backslash f(\mathcal{A})}$ such that they coincide in the boundary with $\partial^+\mathcal{A}$ and $f(\partial^+\mathcal{A})$ and let

$$\F^+=\bigcup_{n\ge 0}f^n(\F^+_0).$$

In a symmetric way we define $\F^-.$ Notice that any annulus $\mathcal{A}_1$ \ro{whose boundary is given by a curve of $\F^+$ and curve of $\F^-$} satisfies $f(\mathcal{A}_1)\subset \textrm{int} (\mathcal{A}_1).$

\smallskip

From now on we fix a periodic point $p\in \partial \C$ as in Theorem \ref{t.continuum}. Replacing $f$ by an iterate and choosing an appropriate lift $F$
we may assume that $p$ is fixed and rotates zero. Let $q\in\partial\C$ be another periodic point with different rotational speed. We may assume without loss of generality
that $q$ is fixed and rotates one.

\begin{lemma}\label{l.arco}

There exist $\eta>0$, an annulus $\mathcal{A}_1$ bounded by leaf of $\F^+$ and a leaf of $\F^-$ and an arc $I_q\subset \mathcal{A}_1$ containing $q$ and joining both boundaries of $\mathcal{A}_1$ such that, if $g$ is $\eta$-$C^0$-close to $f$ and $G$ is the lift $\eta$-close to $F$ we have that  $G(\hat{I}_q)$ is to the right of $\hat{I}_q$ and $G^2(\hat{I}_q)$ is to the right of $\hat{I}_q+1$ in $\pi^{-1}(\cA_1)$ where $\hat I_{q}$ denotes a connected component of the lift of $I_q$.

\end{lemma}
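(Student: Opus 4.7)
The plan is to build $I_q$ as an arc through $q$ whose lift in the universal cover is translated by approximately $+1$ horizontally under $F$ (and by $+2$ under $F^2$), taking advantage of the rotational relation $F(\tilde q)=\tilde q+1$. Since the two displacement conditions are open in the $C^0$-topology on the map for a fixed compact arc $\hat I_q$, once they hold with a strict margin for $f$ the existence of the required $\eta>0$ follows automatically. Thus the substantive task is to produce, for $f$, an arc $I_q\ni q$ joining the boundaries of $\mathcal{A}_1$ such that $F(\hat I_q)$ lies strictly to the right of $\hat I_q$ and $F^2(\hat I_q)$ lies strictly to the right of $\hat I_q+1$, with positive Hausdorff separation.

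I would first fix $\mathcal{A}_1=f^N(\mathcal{A})$ for $N$ large, bounded by $L^+_N:=f^N(\partial^+\mathcal{A})\in\F^+$ and $L^-_N:=f^N(\partial^-\mathcal{A})\in\F^-$, leaving $N$ to be tuned later. Next, following the $C^0$-hyperbolic approximation philosophy mentioned in the introduction (and used in Theorem \ref{t.continuum}), I would pick a small closed disk $D\ni q$ whose lifts $\tilde D+k$ ($k\in\Z$) are pairwise disjoint, and replace $f$ inside $D$ by a $C^0$-small perturbation $\tilde f$ equal to $f$ off $D$ such that $q$ becomes a topologically hyperbolic fixed point of $\tilde f$, with a local stable arc $W^s_{\mathrm{loc}}(\tilde q)\subset\tilde D$ running from top to bottom of $\tilde D$ and a transverse local unstable arc. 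Deck-equivariance then gives $\tilde F(W^s_{\mathrm{loc}}(\tilde q))\subset W^s_{\mathrm{loc}}(\tilde q)+1$.

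I would then extend $W^s_{\mathrm{loc}}(\tilde q)$ by backward iteration to the global stable curve $\hat W^s(\tilde q)=\bigcup_{n\ge 0}\tilde F^{-n}(W^s_{\mathrm{loc}}(\tilde q)+n)$, an injectively immersed topological line through $\tilde q$ on which $\tilde F$ acts as a horizontal shift by $1$, i.e.\ $\tilde F(\hat W^s(\tilde q))\subset\hat W^s(\tilde q)+1$. Take $\hat I_q$ to be the connected component of $\hat W^s(\tilde q)\cap\pi^{-1}(\mathcal{A}_1)$ through $\tilde q$ and $I_q=\pi(\hat I_q)$. Since $\tilde f$ preserves $\mathcal{A}_1$ (which survives by $C^0$-continuity), $\tilde F(\hat I_q)$ is connected, contains $\tilde q+1$, and lies in $(\hat W^s(\tilde q)+1)\cap\pi^{-1}(\mathcal{A}_1)$, so it is contained in the component through $\tilde q+1$, which is $\hat I_q+1$. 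As $\hat I_q$ will have horizontal extent strictly less than $1$, this places $\tilde F(\hat I_q)$ strictly to the right of $\hat I_q$; the same argument applied twice yields $\tilde F^2(\hat I_q)\subset \hat I_q+2$, strictly to the right of $\hat I_q+1$. Both separations are positive, so taking $d_{C^0}(f,\tilde f)$ and $\eta$ both small enough, the strict inclusions survive for every lift $G$ that is $C^0$-within $\eta$ of $F$.

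The key difficulty is ensuring, in the previous step, that $\hat I_q$ both (a) crosses $\pi^{-1}(\mathcal{A}_1)$ from $L^+_N$ to $L^-_N$ and (b) has horizontal extent strictly less than $1$, so that $\pi|_{\hat I_q}$ is injective. Point (a) is resolved by choosing $N$ large enough (so $\mathcal{A}_1$ sits in a small Hausdorff neighborhood of $\C$) and using the expansion of $\tilde F^{-1}$ along the stable direction to push the backward iterates past $L^\pm_N$. Point (b) is where the non-compactly generated structure of $\C$ enters: the composant-like sets $\hat C_{\hat x}$ of Proposition \ref{prop-composant} contain no nonzero integer translate of their base point, and Proposition \ref{prop-levacotado} forces bounded-diameter connected sets to meet only finitely many horizontal translates of a fundamental strip. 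Combined with the vertical transversality of $W^s_{\mathrm{loc}}(\tilde q)$ at $\tilde q$, these topological restrictions prevent the extended stable arc from wrapping horizontally and force it to exit $\pi^{-1}(\mathcal{A}_1)$ vertically with horizontal extent below $1$; this is the step where the bulk of the work really lies.
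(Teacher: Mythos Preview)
Your approach is far more elaborate than needed, and step (b) has a genuine gap. The paper's argument is entirely local at $q$: since $q\in\partial\C$ and $\C$ is the global attractor, the leaves of $\F^+$ and $\F^-$ accumulate on $q$. Given a small $\delta>0$ one picks leaves $\F^+_\delta,\F^-_\delta$ that meet $\partial B(q,\delta)$ but miss $B(q,\delta)$, and takes $\mathcal{A}_1$ to be the annulus between them; then $I_q$ is simply any arc in $B(q,\delta)$ joining those two leaves. Choosing $\delta$ small enough that $f(B(q,\delta))\cup f^2(B(q,\delta))\subset B(q,\epsilon)$, and $\epsilon$ small enough that $B(p,\epsilon)\cap B(q,\epsilon)=\emptyset$, the connected component $K$ of $B(q,\epsilon)\cap\mathcal{A}_1$ containing $B(q,\delta)$ is inessential (an arc through $B(p,\epsilon)$ joins the two boundary leaves without touching $K$). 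Hence a lift $\hat K$ is bounded, $\hat I_q\subset\hat K$, and since $F(\tilde q)=\tilde q+1$ one gets $F(\hat I_q)\subset\hat K+1$ and $F^2(\hat I_q)\subset\hat K+2$. No stable manifolds, no circloid topology; the second periodic point $p$ is used only to certify inessentiality of $K$.

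Your justification of (b) via Propositions \ref{prop-composant} and \ref{prop-levacotado} does not go through: Proposition \ref{prop-levacotado} applies only to inessential continua disjoint from one of $\mathcal{U}^\pm$, whereas the stable arc you want must reach both boundary components of $\mathcal{A}_1$ and therefore meet both $\mathcal{U}^+$ and $\mathcal{U}^-$. Nothing in those results prevents such an arc from wrapping horizontally before exiting, so ``horizontal extent $<1$'' is unproven. In the paper's architecture the dependency runs the other way: the elementary arc $I_q$ is constructed first, with no hard topology, and is then used (Lemma \ref{l.bounded}) as the barrier that bounds the horizontal extent of the stable-type continuum built at the \emph{other} point $p$ in Theorem \ref{t.continuum}. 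You have in effect tried to bootstrap that machinery at $q$ to produce the very barrier it relies on.
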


\begin{proof}

Let $\epsilon>0$ be such that $B(p,\epsilon)\cap B(q,\epsilon)=\emptyset.$ Let $\delta$ be small enough such that $f(B(q,\delta))$ and $f^2(B(q,\delta))$ are contained in $B(q,\epsilon/2)$ (recall $f(q)=q$). Denote $B=B(q,\delta)$.

One can choose unique leaves $\F^+_\delta$ and $\F^-_\delta$  of $\F^+$ and $\F^-$ which \ro{intersect} $\partial B$, and do not intersect $B$.

We may assume (reducing $\delta$ if necessary) that both leaves also intersect $B(p,\epsilon)$ and consider the annulus $\mathcal{A}_1$ determined by $\F^+_\delta$ and $\F^-_\delta$. Denote by $K$ the connected component of $B(q,\epsilon)\cap\mathcal{A}_1$ that contains $B.$ Notice that $K$ is inessential in $\mathcal{A}_1$ since it is disjoint from $B(p,\epsilon)$ (and there is an arc in $B(p,\epsilon)$ joining the two boundaries of $\mathcal{A}_1).$

\smallskip

Let $\eta>0$ be small enough such that if $g$ is $\eta$-$C^0$ close to $f$ in $\mathcal{A}$ then:
\begin{itemize}
\item $g(\mathcal{A}_1)\subset \textrm{int}(\mathcal{A}_1).$
\item $g(B(q,\delta))$ and $g^2(B(q,\delta))$ are contained in $B(q,\epsilon)$
\item $g(B(q,\delta))\cap B(q,\delta)\neq\emptyset$ and $g^2(B(q,\delta))\cap B(q,\delta)\neq\emptyset.$
\end{itemize}
Let $I_q$ be an arc inside $B(q,\delta)$ joining the two boundaries of $\mathcal{A}_1.$ Notice that $g(I_q)$ and $g^2(I_q)$ are both contained in $K.$
Now, fix a lift $\hat{q}$ of $q$ and $\hat{I}_q$ a lift of $I_q$ containing $\hat{q}$ and let $\hat{K}$ be the connected component of $\pi^{-1}(K)$ that contains $\hat{I}_q.$ Let $G$ be the lift of $g$ which is $\eta$-close to the lift $F$ of $f.$ Since $F(\hat{q})=\hat{q}+1$ we have that $F(\hat{I}_q)\subset \hat{K}+1$ and $F^2(\hat{I}_q)\subset \hat{K}+2,$ and the same holds for $G$ which completes the proof.

\end{proof}

\begin{obs}\label{rem-neighborhood}
By continuity, one can assume without loss of generality that there exists a \az{neighbourhood} $N(I_q)$ of $I_q$ \ro{in the annulus $\mathcal{A}_1$} such that if $N(\hat I_q)$ denotes a connected component of the lift, then $G(N(\hat{I}_q))$ is to the right of $\hat{I}_q$ and $G^2(N(\hat{I}_q))$ is to the right of $\hat{I}_q+1$.
\end{obs}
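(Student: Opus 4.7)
For Remark \ref{rem-neighborhood} to hold, the plan is to exploit the openness of the ``right of $\hat{I}_q$'' condition together with continuity of the lifts in the $C^0$-neighborhood of $F$ given by Lemma \ref{l.arco}. First I would record that in the universal cover $\pi^{-1}(\mathcal{A}_1)$, the lifted arc $\hat{I}_q$ is compact and separates $\pi^{-1}(\mathcal{A}_1)$ into open connected components; in particular the ``right of $\hat{I}_q$'' region $\Omega_1$ and the ``right of $\hat{I}_q + 1$'' region $\Omega_2$ are open. Lemma \ref{l.arco} then says precisely that $G(\hat{I}_q) \subset \Omega_1$ and $G^2(\hat{I}_q) \subset \Omega_2$ for every lift $G$ of any $g$ which is $\eta$-$C^0$-close to $f$.

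Next I would invoke continuity: the compact arc $F(\hat{I}_q)$ lies in the open set $\Omega_1$ and hence at positive distance $d_1 > 0$ from $\partial \Omega_1$, and similarly $F^2(\hat{I}_q) \subset \Omega_2$ at positive distance $d_2 > 0$ from $\partial \Omega_2$. Shrinking $\eta$ if necessary, for every $g$ in the resulting $C^0$-neighborhood of $f$ the iterates $G(\hat{I}_q)$ and $G^2(\hat{I}_q)$ remain well inside $\Omega_1$ and $\Omega_2$, and a further application of (uniform) continuity of $G$ produces a tubular neighborhood $N(\hat{I}_q)$ of $\hat{I}_q$ small enough that $G(N(\hat{I}_q)) \subset \Omega_1$ and $G^2(N(\hat{I}_q)) \subset \Omega_2$. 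Projecting, $N(I_q) := \pi(N(\hat{I}_q))$ is the required neighborhood. No real obstacle arises here; the Remark is a routine upgrade of the compact-arc statement of Lemma \ref{l.arco} to an open tubular neighborhood.

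The significance of the Remark becomes apparent in the proof of Theorem \ref{t.continuum} which follows, so it is worth anticipating how it will be used. The plan for that theorem will be: approximate $f$ by perturbations $g$ for which $p$ becomes a hyperbolic saddle, build a connected stable set $S_g$ of $p$ for $g$, and use the rotational wall $N(I_q)$ of the Remark to prevent $S_g$ from being confined to $\mathrm{int}(\mathcal{A})$. Schematically, any $g$-forward-invariant continuum containing $p$ that stays bounded and meets $N(I_q)$ is forced by the rotational push of the Remark to contain orbits with positive rotation, contradicting the zero rotation inherited from the forward orbits in the stable set of $p$. A Hausdorff limit along a sequence $g \to f$ will then produce the desired continuum $C_p$. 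The main obstacle in that future argument is exactly this no-trapping step: it requires delicately combining the prime-end theory of Section \ref{ss.primeend}, the non-compactly-generated topology of $\C$ from Section \ref{ss.continua}, and the horseshoe-style forcing behind Lemma \ref{l.gromov}; the Remark itself merely sets the stage.
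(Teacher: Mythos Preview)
Your proof of the Remark itself (the first two paragraphs) is correct and is exactly the routine continuity argument the paper has in mind when it simply writes ``By continuity''; there is nothing more to it.

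Your third paragraph, however, misreads both the role of the Remark and the ingredients of the forthcoming proof of Theorem \ref{t.continuum}. The Remark is \emph{not} used to prevent the stable set from being confined to $\mathrm{int}(\mathcal{A})$; the escape to $\partial\mathcal{A}_1$ is produced by an explicit local perturbation (Lemma \ref{l.approx}) that forces a branch of $W^s(p,f_n)$ out of $\mathcal{A}_1$. The Remark is used for the opposite purpose: in Lemma \ref{l.bounded} it shows that $W^s_1(p,f_n)$ \emph{avoids} $N(I_q)$, hence has uniformly bounded horizontal diameter in the cover, so that a Hausdorff limit can be taken and the limiting continuum stays inessential. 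The argument is elementary (forward invariance of $W^s_1(p,f_n)$ plus the rotational push of $N(\hat I_q)$ under $G,G^2$ yields an immediate contradiction if they intersect); it does \emph{not} invoke prime-end theory, the non-compactly-generated structure of $\mathcal{C}$, or Lemma \ref{l.gromov}. Those tools enter only later, in Propositions \ref{prop-primeend} and \ref{buenosconexos}, to upgrade ``$C_p$ meets $\partial\mathcal{A}$'' to ``$C_p$ meets both $\mathcal{U}^+$ and $\mathcal{U}^-$''.
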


From now on we fix the annulus $\mathcal{A}_1$ given by the previous lemma. The idea will be to approach $f$ by homeomorphisms presenting a stable manifold of $p$ escaping $\cA_1$ and not intersecting $I_q$ so that we will control its convergence in the limit. 

\begin{lemma}\label{l.approx}

There exists a sequence of homemorphisms $f_n$ converging to $f$ in the $C^0$ topology such that:

\begin{enumerate}

\item $p$  is a hyperbolic fixed point of $f_n.$
\item $W^s(p,f_n)$ intersects the boundary of $\mathcal{A}_1.$

\end{enumerate}

\end{lemma}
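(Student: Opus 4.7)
The plan is to construct, for each $n$, a $C^0$-small perturbation $f_n$ of $f$ in two stages: a local modification near $p$ turning it into a topological hyperbolic saddle, and a tracking argument showing the resulting stable manifold reaches $\partial\mathcal{A}_1$.

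\textbf{Local saddle construction.} Fix $\varepsilon_n = 1/n$ and choose a small topological disk $V_n \ni p$ with $\operatorname{diam}(V_n) < \varepsilon_n/2$, disjoint from $I_q$ and the orbit of $q$ (so as not to disturb the configuration of Lemma~\ref{l.arco}). Replace $f|_{V_n}$ by a homeomorphism that agrees with $f$ on $\partial V_n$ and that, in local coordinates centered at $p$, behaves like a topological linear hyperbolic saddle; this produces $g_n$ admitting local stable and unstable arcs $W^s_{\mathrm{loc}}, W^u_{\mathrm{loc}}$ through $p$ joining $p$ to $\partial V_n$. By construction $d_{C^0}(g_n, f) < \varepsilon_n$. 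Since $\mathcal{C}$ is a circloid, $\partial\mathcal{C} = \partial\mathcal{U}^+ \cup \partial\mathcal{U}^-$ by Section~\ref{ss.continua}, so we may assume $p \in \partial\mathcal{U}^+$. Points of $\mathcal{U}^+$ accumulate at $p$, and by orienting the saddle appropriately we arrange that one endpoint $z_n$ of $W^s_{\mathrm{loc}}$ lies in $\mathcal{U}^+$.

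\textbf{Extending the stable manifold.} Since $z_n \in \mathcal{U}^+ \subset \mathcal{A}_1 \setminus \mathcal{C}$ and hypothesis $(\mathrm{GA})$ gives $\mathcal{C} = \bigcap_{k \ge 0} f^k(\mathcal{A}_1)$, there exists a minimal $k_0 \in \mathbb{N}$ with $f^{-k_0}(z_n) \notin \mathcal{A}_1$. By uniform continuity of $f^{-1}$ on compact sets, if $\varepsilon_n$ is chosen small enough (depending on $k_0$), the backward iterates $g_n^{-k}(z_n)$ remain close to $f^{-k}(z_n)$ for all $k \le k_0$, so also $g_n^{-k_0}(z_n) \notin \mathcal{A}_1$. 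Now $W^s(p, g_n) \supseteq \bigcup_{k \ge 0} g_n^{-k}(W^s_{\mathrm{loc}})$, and the continuous image $g_n^{-k_0}(W^s_{\mathrm{loc}})$ contains $p$ (which is fixed) and the point $g_n^{-k_0}(z_n)$ outside $\mathcal{A}_1$; as a connected set, it must cross $\partial\mathcal{A}_1$. Setting $f_n = g_n$ gives a hyperbolic fixed point $p$ of $f_n$ with $W^s(p, f_n) \cap \partial\mathcal{A}_1 \neq \emptyset$.

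\textbf{Main obstacle.} The delicate aspect is the uniformity trade-off: as $n \to \infty$, the endpoint $z_n$ tends to $p$ and the escape time $k_0(z_n)$ grows (roughly like $\log(1/\varepsilon_n)$, at the rate of expansion of $f^{-1}$ near $\mathcal{C}$). Keeping $g_n^{-k}(z_n)$ close to $f^{-k}(z_n)$ throughout $k \le k_0$ requires quantitatively choosing $\varepsilon_n$ small relative to $k_0$, which is feasible but has to be controlled. If direct tracking ever fails, one may instead perform an auxiliary $\varepsilon_n$-small perturbation supported in a tiny ball around a late backward iterate of $z_n$ (disjoint from $V_n$, $I_q$, and earlier iterates), forcing that iterate onto $\partial^+\mathcal{A}_1$ in the manner of the $C^0$ connecting construction; this variant fits within the same $C^0$-convergence framework.
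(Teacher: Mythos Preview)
Your primary argument has a genuine gap that you yourself flag but do not close. The circularity is real: the endpoint $z_n$ of $W^s_{\mathrm{loc}}$ lies in $V_n$, hence at distance $O(\varepsilon_n)$ from $p\in\mathcal{C}$, so the escape time $k_0(z_n)\to\infty$ as $\varepsilon_n\to 0$; but your tracking step needs $\varepsilon_n$ chosen small in terms of $k_0$. There is no way to decouple these in your setup. Moreover, even granting that $g_n=f$ outside $V_n$, the backward $g_n$-orbit of $z_n$ may re-enter $V_n$ (where $g_n\neq f$) before escaping $\mathcal{A}_1$, and you give no control over what happens then. Your backup ``auxiliary perturbation'' is a gesture in the right direction but is not implemented: a discrete backward orbit can pass from inside $\mathcal{A}_1$ to outside without ever being $\varepsilon_n$-close to $\partial\mathcal{A}_1$, so an $\varepsilon_n$-small push need not land on the boundary.

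The paper's proof sidesteps all of this by exploiting the foliation $\mathcal{F}^+$ (built from forward $f$-iterates of a foliation of the fundamental annulus between $\partial^+\mathcal{A}$ and $f(\partial^+\mathcal{A})$), whose leaves are, by construction, carried monotonically outward by $f^{-1}$ until they reach $\partial^+\mathcal{A}_1$. The perturbation is done in two steps: first, a modification in $B(p,\epsilon_n/2)$ makes $p$ hyperbolic with a fundamental domain $D^s\subset W^s_{\mathrm{loc}}$; second, a perturbation $\varphi$ supported in a small set $U\subset B(p,2\epsilon_n)$, chosen disjoint from the forward iterates of $D^s$ and contained in $\mathrm{int}\,f^{-1}(\mathcal{A}_{\epsilon_n})$, sends a point $y$ on the leaf $\mathcal{F}^+_{\epsilon_n}$ (tangent to $\partial B(p,\epsilon_n)$) onto $D^s$. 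Then $y\in W^s(p,f_n)$, and since $f_n^{-1}(y)$ already lies on $\partial f^{-1}(\mathcal{A}_{\epsilon_n})$ and all further backward iterates stay on leaves strictly outside the perturbation supports, one has $f_n^{-k}(y)=f^{-k}(y)$ exactly for $k\geq 1$, reaching $\partial\mathcal{A}_1$ with no tracking estimate whatsoever. The point is that the foliation provides a built-in ``highway'' to $\partial\mathcal{A}_1$ that the perturbed map cannot disrupt; your argument lacks any such structure.
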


\begin{proof}

Let $\epsilon_n$ be a positive sequence converging to zero. We may assume that $B(p,2\epsilon_n)\subset \mathcal{A}_1$
for every $n.$ Let $\F^+_{\epsilon_n}$ and  $\F^-_{\epsilon_n}$ be the unique leaves of the foliations $\F^+$ and $\F^-$ which intersect $\partial B(p,\epsilon_n)$,
and do not intersect $B(p,\epsilon_n)$. Let $\mathcal{A}_{\epsilon_n}$ be the annulus determined by those leaves. Now consider $g_n$ such
that $g_n=f$ outside $B(p,\epsilon_n/2)$  and $p$ is a hyperbolic fixed point of $g_n.$ The $C^0$ distance between $g_n$ and $f$ is bounded by \ro{$\epsilon_n.$}

Fix a fundamental domain $D^s$ of $W^s(p,g_n)$ inside $B(p,\epsilon_n/2)$ and join an interior point $z$ of $D^s$ with a point $y$ in $\F^+_{\epsilon_n} \cap\partial B(p,\epsilon_n)$ by a poligonal arc inside $B(p,\epsilon_n)$, see
figure \ref{perturbation}. Let $U$ be a neighborhood of this arc which  does not intersect the forward iterates $g^m_n(D^s),m\ge 1$ and such that $\overline{U}$ is contained in the interior of $g_n^{-1}(\mathcal{A}_{\epsilon_n})$, which is equal to $f^{-1}(\mathcal{A}_{\epsilon_n}).$ We may assume that $U\subset B(p,2\epsilon_n)$ as well. See figure \ref{perturbation}.

Consider $\varphi:\mathcal{A}\to \mathcal{A}$ such that $\varphi\equiv \mathrm{id}$ outside $U$ and $\varphi(y)=z.$ The $C^0$ distance between $\varphi$ and the identity is bounded by $2\epsilon_n.$ Let $f_n=g_n\circ\varphi.$ We have that $y\in W^s(p,f_n)$ and $f_n^{-1}(y)$ belongs to the boundary of $f^{-1}(\mathcal{A}_{\epsilon_n}).$ Since $g_n=f_n$ outside $U$ and $g_n=f$ outside $B(p,\epsilon_n/2)$, iterating backwards we eventually have that $W^s(p,f_n)$ intersects the boundary of $\mathcal{A}_1.$

Finally, it is clear that the $C^0$ distance from $f_n$  to $f$ goes to zero with $\epsilon_n$ as desired.
\end{proof}

\tiny

\begin{figure}[ht]\begin{center}
 \psfrag{p}{$p$}
 \psfrag{Bp}{$B(p,\epsilon_n)$}
 \psfrag{U}{$U$}
 \psfrag{F+}{$\F^+_{\epsilon_n}$}
 \psfrag{F-}{$\F^-_{\epsilon_n}$}
 \psfrag{fF+}{$f^{-1}(\F^+_{\epsilon_n})$}
 \psfrag{fF-}{$f^{-1}(\F^-_{\epsilon_n})$}
 \psfrag{Ws}{$W^s(p,g_n)$}
 \centerline{\includegraphics[height=7cm]{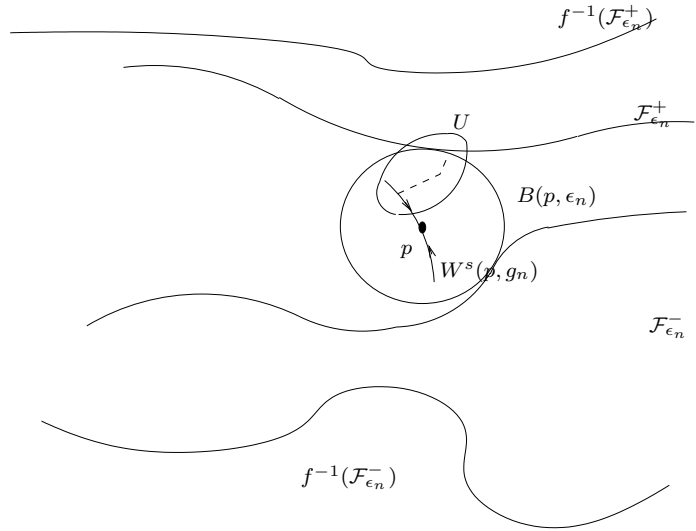}}
\caption{Construction of small perturbations having fixed hyperbolic saddles with stable manifolds accumulating at $-\infty$ or $+\infty$.}\label{perturbation}
\end{center}\end{figure}

\normalsize

Denote by $W^s_1(p,f_n)$ the connected component of $W^s(p,f_n)\cap\mathcal{A}_1$ that contains $p.$

\begin{obs}\label{rem-invariantWs}
The set $W^s_1(p,f_n)$ verifies that $f_n(W^s_1(p,f_n))\en W^s_1(p,f_n)$. Indeed, $f_n(\cA_1) \en \cA_1$ and $W^s(p,f_n)$ is also $f_n$-invariant.
\end{obs}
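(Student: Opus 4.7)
The plan is to verify the invariance by a standard connectedness argument, using the two ingredients already highlighted in the hint: the forward invariance of $\cA_1$ under $f_n$, and the forward invariance of the global stable manifold $W^s(p,f_n)$.

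First I would observe that $W^s(p,f_n)$ is $f_n$-invariant in the sense $f_n(W^s(p,f_n)) \subseteq W^s(p,f_n)$, since this is a general property of stable manifolds: if a point accumulates to $p$ under forward iteration, so does its image. Combined with the hypothesis $f_n(\cA_1) \subseteq \cA_1$, I would get
$$f_n\bigl(W^s(p,f_n) \cap \cA_1\bigr) \;\subseteq\; f_n(W^s(p,f_n)) \cap f_n(\cA_1) \;\subseteq\; W^s(p,f_n) \cap \cA_1.$$

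Next I would restrict to the distinguished connected component. Since $f_n$ is continuous and $W^s_1(p,f_n)$ is connected, its image $f_n(W^s_1(p,f_n))$ is connected; moreover it contains $f_n(p)=p$, because $p$ is a fixed point of $f_n$ by construction in Lemma \ref{l.approx}. So $f_n(W^s_1(p,f_n))$ is a connected subset of $W^s(p,f_n) \cap \cA_1$ containing $p$, and by definition of connected component it must lie inside the component of that intersection that contains $p$, namely $W^s_1(p,f_n)$. This yields $f_n(W^s_1(p,f_n)) \subseteq W^s_1(p,f_n)$ as desired.

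There is no real obstacle here; the statement is essentially a formal consequence of the two invariance properties plus continuity. The only point worth being careful about is that one should not confuse $W^s_1(p,f_n)$, which is defined as a connected component inside $\cA_1$, with the arc-component of $W^s(p,f_n)$ through $p$ in the usual sense; the argument above uses only topological connectedness and the fact that $p\in W^s_1(p,f_n)$ is fixed, so the distinction is immaterial.
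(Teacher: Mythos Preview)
Your argument is correct and is exactly the approach the paper has in mind: the remark itself only records the two invariance facts ($f_n(\cA_1)\subset\cA_1$ and $f_n(W^s(p,f_n))\subset W^s(p,f_n)$), and you have simply made explicit the standard connectedness step that passes from the intersection to the component through the fixed point $p$. There is nothing to add.
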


We now use Lemma \ref{l.arco} to control the diameter of $W^s_1(p,f_n)$ in order to be able to consider a limit continuum through $p$ which will be fordward invariant by $f$.

\begin{lemma}\label{l.bounded} Let $\mathcal{A}_1$ and $I_q$ be as in Lemma \ref{l.arco}. Then, there is a \az{neighbourhood} $N(I_q)$ of $I_q$ such that
$W^s_1(p,f_n)\cap  N(I_q)=\emptyset$ for every large enough $n.$
\end{lemma}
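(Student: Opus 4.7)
The strategy is proof by contradiction: assume, along a subsequence, that $x_n\in W^s_1(p,f_n)\cap N(I_q)$ for every $n$. The plan is to construct, in the universal cover, a compact arc $\gamma_n$ inside a single stable manifold from a suitable fixed lift $\hat p_n$ of $p$ to a lift $\hat x_n\in N(\hat I_q)$ of $x_n$, and then to combine the contraction of $W^s$ under $F_n^2$ with the rotation property of Remark~\ref{rem-neighborhood} to force $\gamma_n$ to contain points with arbitrarily large first coordinate, contradicting its compactness.

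First, choose $N(I_q)$ small enough that the conclusions of Remark~\ref{rem-neighborhood} hold uniformly for every $G$ in a small $C^0$-neighborhood of $F$ (and hence for $F_n$ once $n$ is large). Let $\hat x_n$ be the lift of $x_n$ lying in $N(\hat I_q)$, and let $\hat W_n^\ast$ be the connected component of $\pi^{-1}(W^s_1(p,f_n))$ containing $\hat x_n$. Since $F_n$ commutes with integer translations, every $\hat p+j$ is a hyperbolic fixed point of $F_n$; as distinct stable manifolds of hyperbolic fixed points of a diffeomorphism are disjoint, one has $\pi^{-1}(W^s(p,f_n))=\bigsqcup_{j\in\Z}W^s(\hat p+j,F_n)$, and thus $\hat W_n^\ast\subseteq W^s(\hat p_n,F_n)$ for a unique $\hat p_n:=\hat p+j_n$. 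Let $\gamma_n\subseteq\hat W_n^\ast$ be the arc in the embedded $1$-manifold $W^s(\hat p_n,F_n)$ joining $\hat p_n$ to $\hat x_n$. Because $F_n^2$ restricts to $W^s(\hat p_n,F_n)$ as an orientation-preserving contraction fixing $\hat p_n$, one has $F_n^2(\gamma_n)\subseteq \gamma_n$, and hence $F_n^{2j}(\gamma_n)\subseteq \gamma_n$ for every $j\geq 0$.

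By Remark~\ref{rem-neighborhood} applied to $\hat x_n\in N(\hat I_q)$, the point $F_n^2(\hat x_n)$ lies to the right of $\hat I_q+1$; since $F_n^2(\hat x_n)\in \gamma_n$, the arc $\gamma_n$ contains a point to the right of $\hat I_q+1$. Pick $k_0\geq 1$ so that the first coordinate of $\hat p_n$ is strictly less than that of $\hat I_q+k_0$; I claim by induction on $k\geq k_0$ that $\gamma_n$ contains a point to the right of $\hat I_q+k$. For the inductive step: given such a point, connectedness of $\gamma_n$ together with $\hat p_n\in \gamma_n$ lying to the left of $\hat I_q+k$ forces $\gamma_n$ to meet $\hat I_q+k$ at some $\hat y_k\in N(\hat I_q)+k$; the rotation property then gives $F_n^2(\hat y_k)$ to the right of $\hat I_q+k+1$, and $F_n^2(\hat y_k)\in F_n^2(\gamma_n)\subseteq \gamma_n$. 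The base case is reached either directly if $k_0=1$ (since $\gamma_n$ already contains a point to the right of $\hat I_q+1$), or otherwise by first observing that the arc $\gamma_n$ from $\hat p_n$ to $\hat x_n$ crosses $\hat I_q+k$ for integer $k$ up to roughly $\pi_1(\hat p_n)-q_1$ by continuity of the first coordinate, and then feeding such a crossing into the rotation step to advance as far as $k_0$.

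Thus $\gamma_n$ contains points whose first coordinate is arbitrarily large, contradicting compactness of the arc. The main technical point is the invariance $F_n^2(\gamma_n)\subseteq \gamma_n$, which rests on $F_n^2$ acting as an orientation-preserving contraction on $W^s(\hat p_n,F_n)$ with unique fixed point $\hat p_n$; some care is also needed for the base case of the induction since, a priori, $\hat p_n$ may lie on either side of $\hat I_q+1$ in the lift.
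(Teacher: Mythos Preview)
Your proof is correct, but it takes a more circuitous route than the paper's. Both arguments rest on the same two ingredients: (i) a sub-arc of the lifted stable manifold through the fixed lift of $p$ is forward invariant (under $F_n$ in the paper, under $F_n^2$ in your version --- your choice of the square neatly sidesteps any orientation issue on $W^s$), and (ii) the rotation property from Remark~\ref{rem-neighborhood}, which pushes points of $N(\hat I_q)$ to the right of $\hat I_q+1$ after two iterates.

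The paper's argument is a one-step contradiction. It fixes $\hat p$ in the fundamental domain $D$ bounded by $\hat I_q-1$ and $\hat I_q$, lets $W$ be the connected component of $\hat W^s_1\cap D$ through $\hat p$, and observes $F_n(W)\subset W\subset D$ (since $W$ is a sub-arc of $W^s$ containing the fixed point $\hat p$). If $W$ met $N(\hat I_q)$, the rotation property would force $F_n(W)$ to contain a point to the right of $\hat I_q$, hence outside $D$ --- a contradiction (and symmetrically for $\hat I_q-1$ using $F_n^2$). Your argument instead iterates the rotation step infinitely often along $\gamma_n$ to show that this compact arc is horizontally unbounded. This is valid, but the base case is handled awkwardly: rather than tracking the unknown position of $\hat p_n$ and introducing $k_0$, you can simply use $\hat x_n\in\gamma_n$ as the ``left anchor'' at every step (once $N(I_q)$ is small, $\hat x_n$ lies to the left of $\hat I_q+k$ for every $k\geq 1$), which makes the induction start cleanly at $k=1$ and eliminates the case analysis.

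In short: same ingredients, same contradiction; the paper's fundamental-domain trick compresses your infinite induction into a single application of the rotation property.
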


\begin{proof}

In the lift  $\tilde{\mathcal{A}_1}$ of $\cA_1$, we choose $\hat{p}$ in the fundamental domain $D$ determined  by a connected component $\hat{I}_q$ of the lift of $I_q$ and $\hat{I}_q-1$.

Consider a lift $\hat{W}^s_1(\hat{p},f_n)$ of $W^s_1(p,f_n)$ through $\hat{p}.$ Let $W$ be the connected component of $\hat{W}^s_1(\hat{p},f_n)\cap D$ that contains $\hat p$ and let $F_n$ be a lift of $f_n$ close to the lift $F$ of $f.$ Notice that $F_n(W)\subset W.$ We may assume that $f_n$ is $\eta$-close to $f$ where $\eta$ is as in Lemma \ref{l.arco}.

Choose $N(I_q)$ as in remark \ref{rem-neighborhood}. Assume that $\hat{W}^s_1(\hat{p},f_n)\cap N(\hat{I}_q) \neq\emptyset$. Then $W\cap N(\hat{I}_q)\neq\emptyset.$ But then $F_n(W)\subset W\subset D$. Since $F_n(N(\hat{I}_q))$ is to the right of $\hat{I}_q$ Lemma \ref{l.arco} implies that $F_n(W)$ is not contained in $D,$ a contradiction. If $\hat{W}^s_1(\hat{p},f_n)\cap (\hat{I}_q-1)\neq\emptyset $ we arrive to a contradiction as well, since then $F_n^2(W)$ is contained in $W$ and contains a point in $F_n^2(\hat{I}_q-1)$ which is to the right of $\hat{I}_q$ and so it must intersect $\hat{I}_q.$
\end{proof}

\textbf{End of proof of Theorem \ref{t.continuum}:} we say that a set $S\subset \R^2$ has \emph{bounded horizontal diameter}
if its projection to the first coordinate is bounded. In this case, let us denote $\textrm{diam}_H(S)=\textrm{diam}(\pi_1(S))$.
We consider the lift $\hat{\mathcal{A}}$ of $\mathcal{A}$. Let $\mathcal{A}_1$ be as  in Lemma \ref{l.arco} and let $\hat{\mathcal{A}}_1$ be its lift inside $\hat{\mathcal{A}}.$

\smallskip

In this context, we have that the fundamental domain in $\hat{\mathcal{A}}_1$ determined by $\hat{I}_q-1$ and $\hat{I}_q$ has bounded horizontal diameter, say by $a> 0$. This implies, by Lemma \ref{l.bounded} that $\textrm{diam}_H(\hat{W}^s_1(p_n,f_n))$ is also bounded by $a.$

\smallskip

Let $m$ be the first positive integer such that $f^m(\mathcal{A})\subset \mathcal{A}_1.$ Notice that $f^m_n(\mathcal{A})\subset\mathcal{A}_1$ by construction. Let $F$ be the lift of $f$ and $F_n$ the lift of $f_n.$ Then $F^{-m}_n(\hat{W}^s_1(p,f_n))$ has bounded diameter in $\R^2.$ Let $\hat C_n=F^{-m}_n(\hat{W}^s_1(p,f_n)).$ We  have that:
\begin{enumerate}
\item $\hat C_n$ is a continuum containing $\hat{p}.$
\item $\hat C_n$ is forward invariant by $F_n$ (c.f. remark \ref{rem-invariantWs}).
\item $\hat C_n$ intersects the boundary of $\hat{\mathcal{A}}.$
\item $\hat C_n$  has uniformly bounded diameter.
\item $F_n\rightrightarrows F.$
\end{enumerate}

Then, by taking the Hausdorff limit $\hat{C}_p$ of $(\hat C_n)_{n\in\N}$ we have a continuum which is forward invariant under $F$ and contains $\hat{p}$. Moreover, it intersects $\partial \hat{\mathcal{A}}$, and its projection into $\A$ must be inessential, since otherwise it would intersect $I_q$ which is not possible. Taking $C_p=\pi(\hat{C}_p)$ we are done.

\lqqd

\ve{
\subsection{General Statement for Theorem A}\label{ss.teoC}

In this section we comment on the proof of Theorem A to see that weaker hypothesis are enough to obtain the existence
of rotational horseshoes. We state a general version of the result from which Theorem C can be obtained.

Consider an $f\in\textrm{Homeo}_+(\A)$ such that $f(\mathcal{A})\subset\textrm{int}(\mathcal{A})$ for
some compact and essential annulus $\mathcal{A}$. In this situation an attractor
$K_{\mathcal{A}}=\bigcap_{n\in\N}f^n(\mathcal{A})$ exists and is an essential annular continuum.

The proof of Theorem \ref{t.continuum} extends to the following with the same proof.

\begin{thm}\label{t.continuum2}

Assume we are in the situation above, and $p,q\in K_{\mathcal{A}}$ are two periodic points of $f$ so that

\begin{itemize}

\item $p$ and $q$ have different rotation vectors for any lift $F$ of $f$,

\item $p$ and $q$ are both an accumulation point of $\bigcup_{n\in\N}f^n(\partial^+ \mathcal{A})$ and of $\bigcup_{n\in\N}f^n(\partial^- \mathcal{A})$.

\end{itemize}

Then there exists an inessential continuum $C_p$ containing $p$ such that $f^k(C_p)\subset C_p$ where
$k$ is the period of $p$ and $C_p\cap\partial A\neq\emptyset$. A similar statement holds for $q$.

\end{thm}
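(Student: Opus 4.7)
The plan is to run the proof of Theorem \ref{t.continuum} essentially verbatim, once one notices that the only role of hypothesis (GA) in that argument was to ensure that the leaves of the foliations $\F^+$ and $\F^-$ (built by iterating a foliation of $\overline{\mathcal{A} \setminus f(\mathcal{A})}$ with $\partial^\pm \mathcal{A}$ and $f(\partial^\pm \mathcal{A})$ as boundary leaves) accumulate at each of the periodic points $p$ and $q$. The second hypothesis of Theorem \ref{t.continuum2} is designed precisely for this: since the leaves of $\F^\pm$ contain the forward iterates $f^n(\partial^\pm \mathcal{A})$, asking that $p$ and $q$ both accumulate $\bigcup_n f^n(\partial^\pm \mathcal{A})$ is exactly asking that leaves of $\F^\pm$ come arbitrarily close to each of them.

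With the foliations in place, I would first pass to an iterate of $f$ and an appropriate lift $F$ so that $p$ is fixed with zero horizontal translation and $q$ is fixed with translation one. Then I would reproduce the analog of Lemma \ref{l.arco}: pick disjoint small balls $B(p,\epsilon)$ and $B(q,\epsilon)$, use leaves of $\F^\pm$ close to $\partial B(q,\delta)$ (provided by the accumulation hypothesis at $q$) that also enter $B(p,\epsilon)$ (provided by the accumulation at $p$) to define an annulus $\mathcal{A}_1$ bounded by two such leaves, and take an arc $I_q \subset B(q,\delta)$ joining the two boundary components of $\mathcal{A}_1$. The translation properties of $I_q$ under $F$ and under $C^0$-close lifts $G$ then follow exactly as before.

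Next I would reproduce Lemma \ref{l.approx}: using leaves of $\F^+$ arbitrarily close to $p$, construct a sequence of perturbations $f_n \to f$ in $C^0$ with $p$ a hyperbolic fixed saddle, and then perturb further so that the stable manifold $W^s(p,f_n)$ contains a point on a leaf of $\F^+$ close to $p$; iterating that leaf backward along the foliation structure eventually meets $\partial \mathcal{A}_1$. Lemma \ref{l.bounded}, bounding the horizontal diameter of the connected component $W^s_1(p,f_n)$ of $W^s(p,f_n) \cap \mathcal{A}_1$ containing $p$, then carries over unchanged, since its only inputs are the translation behavior of $I_q$ from Lemma \ref{l.arco} and the forward invariance of $W^s_1(p,f_n)$ under $f_n$.

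Finally, choosing $m$ with $f^m(\mathcal{A}) \subset \mathcal{A}_1$ and setting $\hat C_n = F_n^{-m}(\hat W^s_1(p,f_n))$ in the universal cover, a Hausdorff limit $\hat C_p$ yields the required continuum: containing a lift $\hat p$, forward $F$-invariant, meeting $\partial \widetilde{\mathcal{A}}$, and with horizontal diameter bounded by that of the fundamental domain determined by $\hat I_q$ and $\hat I_q - 1$, so that $C_p = \pi(\hat C_p) \subset \A$ is inessential. The main obstacle is thus the bookkeeping verification that the proof of Theorem \ref{t.continuum} uses neither the circloid property nor global attraction except through the accumulation of $\F^\pm$ at $p$ and $q$; the conclusion for $q$ is obtained symmetrically by exchanging the roles of $p$ and $q$ (and of positive and negative rotation).
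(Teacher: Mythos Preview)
Your proposal is correct and matches the paper's approach exactly: the paper states outright that Theorem \ref{t.continuum2} ``extends to the following with the same proof'' as Theorem \ref{t.continuum}, and your walkthrough correctly identifies that the accumulation hypothesis on $\bigcup_n f^n(\partial^\pm\mathcal{A})$ is precisely what replaces the global-attractor-circloid hypothesis in supplying leaves of $\F^\pm$ near $p$ and $q$.
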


The following is an easy application of Zorn's Lemma and Theorem \ref{teo-circindec}.

\begin{lemma}\label{lemmazorn}

Let $f\in\textrm{Homeo}_+(\A)$ and a closed essential annulus $\mathcal{A}$ such that $f(\mathcal{A})\subset\textrm{int}(\mathcal{A})$.
Further, assume that there are at least four periodic points $p_1,p_2,p_3,p_4$ in $\mathcal{A}$ having pairwise different rotation vectors for any lift $F$ of $f$,
and that $\bigcup_{n\in\N}f^n(\partial^i\mathcal{A})$ accumulates in $p_1,p_2,p_3$ and $p_4$ for $i=+,-$.
Then, there exists an invariant a non-compactly generated circloid $\C\subset K_{\mathcal{A}}$ so that $p_i\in\C$ for $i=1,2,3,4$.

\end{lemma}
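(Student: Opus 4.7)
The plan is to apply Zorn's Lemma to a family of $f$-invariant annular continua containing $\{p_1,\ldots,p_4\}$ and identify the minimal element as a circloid via the outer-boundary construction; Theorem~\ref{teo-circindec} then concludes non-compact generation.

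Replacing $f$ by $f^2$ if necessary we may assume $f$ preserves both ends of $\A$, so bounded and unbounded components of complements of invariant sets are permuted separately. The attractor $K_{\mathcal{A}}=\bigcap_{n\geq 0}f^n(\mathcal{A})$ is compact, connected, and bi-invariant ($f(K_{\mathcal{A}})=K_{\mathcal{A}}$); by maximality among bi-invariant compact subsets of $\mathcal{A}$, its filling by bounded complementary components sits inside it, so $K_{\mathcal{A}}$ has no bounded components in its complement in $\A$. Moreover $K_{\mathcal{A}}$ must be essential, for otherwise it would lie in a disk in $\A$ and all periodic points inside it would share a common rotation vector, contradicting the hypothesis. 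Hence $K_{\mathcal{A}}$ is itself an annular continuum, and belongs to the family
\[
\mathcal{F}=\{K\subset K_{\mathcal{A}}:\,K\text{ is a bi-invariant annular continuum with }\{p_1,\ldots,p_4\}\subset K\},
\]
ordered by reverse inclusion. For a totally ordered subfamily $\{K_\alpha\}\subset\mathcal{F}$, the intersection $K_\infty=\bigcap_\alpha K_\alpha$ is compact, nonempty, and connected; writing $\mathcal{U}^\pm_\alpha$ for the two unbounded components of $K_\alpha^c$, the monotonicity $K_\alpha\supset K_\beta\Rightarrow\mathcal{U}^\pm_\alpha\subset\mathcal{U}^\pm_\beta$ gives that $\mathcal{U}^\pm_\infty:=\bigcup_\alpha\mathcal{U}^\pm_\alpha$ are disjoint, unbounded, open and connected sets with $K_\infty^c=\mathcal{U}^+_\infty\sqcup\mathcal{U}^-_\infty$, so $K_\infty\in\mathcal{F}$. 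Zorn's Lemma provides a minimal element $\mathcal{C}\in\mathcal{F}$.

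To see that $\mathcal{C}$ is a circloid, let $\mathcal{U}^+$ denote the upper unbounded component of $\mathcal{C}^c$ and set $\mathcal{C}^+=\partial\mathcal{U}^+$. Classical separation arguments in $\A$ together with the boundary criterion quoted in \S\ref{ss.continua} from \cite{Jaeger} identify $\mathcal{C}^+$ as a circloid whose complement in $\A$ consists of $\mathcal{U}^+$ and $\A\setminus\overline{\mathcal{U}^+}$, both unbounded; end-preservation gives $f(\mathcal{U}^+)=\mathcal{U}^+$, hence $\mathcal{C}^+$ is $f$-invariant. Further, since $\partial^+\mathcal{A}\subset\mathcal{U}^+$ (as $\mathcal{C}\subset\textrm{int}(\mathcal{A})$) and $f$ preserves the upper end, every circle $f^n(\partial^+\mathcal{A})$ lies in $\mathcal{U}^+$, so the hypothesis that $\bigcup_n f^n(\partial^+\mathcal{A})$ accumulates at each $p_i$ forces $p_i\in\overline{\mathcal{U}^+}$; since $p_i\in\mathcal{C}$ is disjoint from $\mathcal{U}^+$, we conclude $p_i\in\partial\mathcal{U}^+=\mathcal{C}^+$. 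Therefore $\mathcal{C}^+\in\mathcal{F}$, and by minimality $\mathcal{C}^+=\mathcal{C}$, so $\mathcal{C}$ is an invariant circloid containing the $p_i$. As $\rho_\mathcal{C}(F)$ contains their four distinct rotation vectors, Theorem~\ref{teo-circindec} concludes that $\mathcal{C}$ is non-compactly generated.

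The delicate point in the plan is the identification of $\mathcal{C}^+=\partial\mathcal{U}^+$ as a circloid: one must verify the two-component complement structure and check the Jäger boundary criterion; the rest is routine Zorn on nested intersections of annular continua, together with the accumulation hypothesis used to place the $p_i$ on the outer boundary of $\mathcal{U}^+$.
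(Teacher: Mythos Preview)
Your strategy---Zorn's Lemma on the family of invariant annular continua containing the $p_i$, then Theorem~\ref{teo-circindec}---is exactly what the paper intends (it gives no further detail than the one-line hint). The skeleton is right, but the step you yourself flag as delicate is not merely delicate: as written it is false.

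The set $\mathcal{C}^+=\partial\mathcal{U}^+$ need not be an annular continuum, let alone a circloid. Take for instance $\mathcal{C}=(\mathbb{S}^1\times\{0\})\cup D$ where $D$ is a closed disk tangent from above to the circle; this is an annular continuum, but $\A\setminus\overline{\mathcal{U}^+}=\mathcal{U}^-\cup\textrm{int}(D)$ is disconnected, so $\partial\mathcal{U}^+$ has three complementary components. The J\"ager criterion you cite does not help here: it is a criterion for an \emph{annular continuum} to be a circloid, and you have not yet produced one.

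The repair is to use \emph{both} accumulation hypotheses. Replace $\mathcal{C}^+$ by the filling $\widetilde{\mathcal{C}}^+$ of $\partial\mathcal{U}^+$ (i.e.\ $\partial\mathcal{U}^+$ together with the bounded components of its complement). One checks $\widetilde{\mathcal{C}}^+\subset\mathcal{C}$ since $\mathcal{C}$ has no bounded complementary components, that it is $f$-invariant since $\mathcal{U}^+$ is, and that it contains each $p_i$ since $f^n(\partial^+\mathcal{A})\subset\mathcal{U}^+$ forces $p_i\in\partial\mathcal{U}^+$. Hence $\widetilde{\mathcal{C}}^+\in\mathcal{F}$ and minimality gives $\widetilde{\mathcal{C}}^+=\mathcal{C}$, whence $\partial\mathcal{C}=\partial\mathcal{U}^+$. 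The \emph{symmetric} argument using $\partial^-\mathcal{A}$ gives $\partial\mathcal{C}=\partial\mathcal{U}^-$. Now $\partial\mathcal{C}=\partial\mathcal{U}^+\cap\partial\mathcal{U}^-$, and the J\"ager criterion applies legitimately to conclude that $\mathcal{C}$ is a circloid. Your one-sided argument cannot reach this conclusion: in the disk-on-circle example the upper filling equals $\mathcal{C}$ yet $\mathcal{C}$ is not a circloid.
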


With these two results, following exactly the proof of Theorem A, we obtain a more general result.

\begin{thm}\label{t.generalA}

Let $f\in\textrm{Homeo}_+(\A)$ and a closed essential annulus $\mathcal{A}$ such that $f(\mathcal{A})\subset\textrm{int}(\mathcal{A})$.
Further, assume that there are at least four periodic points
$p_1,p_2,p_3,p_4$ in $\mathcal{A}$ having pairwise different rotation vectors for any lift $F$ of $f$, and that
$\bigcup_{n\in\N}f^n(\partial^i\mathcal{A})$ accumulates in $p_1,p_2,p_3$ and $p_4$ for $i=+,-$.
Then, there exists $n_0\in\N$ and a $C^0$-neighborhood $\mathcal{N}$ of $f$ in $\textrm{Homeo}_+(\A)$ such that for
every element $g\in\mathcal{N}$ the power $g^{n_0}$ has a rotational horseshoe $\Lambda_g\subset\mathcal{A}$. In particular, every
element in $\mathcal{N}$ has topological entropy larger than $\frac{\mathrm{log}(2)}{n_0}$.

\end{thm}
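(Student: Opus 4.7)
The plan is to follow the architecture of the proof of Theorem A, replacing the "global attractor" hypothesis by the two new inputs: Lemma \ref{lemmazorn} (to produce the invariant circloid) and Theorem \ref{t.continuum2} (the variant of Theorem \ref{t.continuum} under the boundary-accumulation hypothesis). First I would apply Lemma \ref{lemmazorn} to the data $(f,\mathcal{A},p_1,\dots,p_4)$ to obtain an $f$-invariant non-compactly generated circloid $\C\subset K_{\mathcal{A}}$ containing all four periodic points. Denote by $\cU^+$ and $\cU^-$ the two unbounded connected components of $\A\setminus \C$; these are $f$-invariant simply connected open sets, which is exactly what is needed for the prime-end machinery of Section \ref{ss.primeend}. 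Note that the global-attractor assumption was only used in the proof of Theorem A to (a) ensure $\C$ is non-compactly generated (now handed to us by Lemma \ref{lemmazorn}), (b) guarantee that the iterates of $\partial^{\pm}\mathcal{A}$ accumulate at periodic points in $\partial \C$ (now a hypothesis), and (c) identify $\partial \mathcal{A}\cap \C_{p_i}^{\text{complement}}$ with $\cU^{\pm}$; the accumulation assumption preserves (c) for the four distinguished points.

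Next I would apply Theorem \ref{t.continuum2} to each $p_i$ to produce an inessential continuum $C_{p_i}\ni p_i$ which is forward-invariant under a suitable power of $f$ and intersects $\partial \mathcal{A}$. Each $C_{p_i}$ therefore meets $\cU^+$ or $\cU^-$ (or both); if it meets only one of the two, $p_i$ is \emph{dynamically continuum accessible} in the sense of Subsection~3.3 from above or from below, respectively. Here I would invoke Proposition \ref{prop-primeend}: its proof only uses that $\C$ is a non-compactly generated invariant circloid (through Proposition \ref{prop-levacotado} and prime-end compactification of $\cU^+$), not that $\C$ is a global attractor. Hence among the four periodic points $p_1,\dots,p_4$, at most one is dynamically continuum accessible from above and at most one from below. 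Since the four rotation numbers are pairwise distinct, at least two of them — call them $p$ and $q$ — are neither accessible from above nor from below, so $C_p$ and $C_q$ both intersect $\cU^+$ and $\cU^-$ simultaneously. In particular they meet both boundary components $\partial^+\mathcal{A}$ and $\partial^-\mathcal{A}$.

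With $p$, $q$, $C_p$, $C_q$ in hand, I would feed them into Lemma \ref{l.rotationentropy}. The lemma produces an integer $n_0$ and a $C^0$-neighborhood $\mathcal{N}$ of $f$ in $\textrm{Homeo}_+(\A)$ so that every $g\in\mathcal{N}$ has a rotational horseshoe for $g^{n_0}$ contained in $\mathcal{A}$ (the rectangle and the Markov configuration built in that lemma are constructed inside $\mathcal{A}$, and the $C^0$-openness is built into Lemma \ref{l.gromov}). The lower bound $h_{\mathrm{top}}(g)\geq \tfrac{\log 2}{n_0}$ follows because the rotational horseshoe for $g^{n_0}$ has at least two symbols.

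I expect the only delicate verification to be that Proposition \ref{prop-primeend} and Theorem \ref{t.continuum2} genuinely survive in this weaker setting: one must re-read the proof of Proposition \ref{prop-primeend} and confirm that the only properties of $f$ used are the invariance of $\C$ and the non-compactly-generated character of the circloid (so that Proposition \ref{prop-levacotado} applies to $\C$), together with the conjugacy of $F|_{\tilde\cU^+}$ with a lift $\hat H$ of a circle homeomorphism on $\partial \HH^2$; none of these require $\C$ to be globally attracting. Similarly, in Theorem \ref{t.continuum2} the perturbative construction of the "stable continuum" through $p$ uses boundary curves of $\mathcal{A}$ and their iterates, and the hypothesis that $\bigcup_n f^n(\partial^{\pm}\mathcal{A})$ accumulates at $p$ is exactly what allows one to run the approximation argument of Lemmas \ref{l.approx}--\ref{l.bounded} without using the global-attractor property. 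Once these two items are checked, the rest of the proof assembles essentially verbatim from the proof of Theorem A.
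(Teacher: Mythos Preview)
Your proposal is correct and coincides with the paper's approach: the paper's proof of Theorem~\ref{t.generalA} is literally the sentence ``With these two results [Theorem~\ref{t.continuum2} and Lemma~\ref{lemmazorn}], following exactly the proof of Theorem A, we obtain a more general result,'' and your outline is a faithful expansion of that sentence, threading Lemma~\ref{lemmazorn} and Theorem~\ref{t.continuum2} into the skeleton given by Proposition~\ref{prop-primeend}, Proposition~\ref{buenosconexos}, and Lemma~\ref{l.rotationentropy}. Your remarks on why Proposition~\ref{prop-primeend} and Lemma~\ref{l.rotationentropy} survive without the global-attractor hypothesis are exactly the verifications the paper leaves implicit.
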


We finish this paragraph by showing that Theorem C can be derived from this las theorem as well. In the hypothesis
of Theorem C we have that for the closed annulus $\mathcal{A}$ the attractor $K_{\mathcal{A}}$ must have empty-interior
as the map is dissipative. Furthermore, as the Birkhoff attractor $\C$ is by definition the unique circloid contained
in $K_{\mathcal{A}}$ and has non-empty interior, it must be that $\C=\overline{U^+}\cap \overline{U^-}$ where $U^+,U^-$ are the connected
components of $\A\setminus K_{\mathcal{A}}$. This implies that both sets $\bigcup_{n\in\N}f^n(\partial^+ \mathcal{A})$
and $\bigcup_{n\in\N}f^n(\partial^+ \mathcal{A})$ accumulates on every point of $\C$. As the rotation
set on $\C$ is not trivial, the realisation results \cite{kororealizacion,BG} imply that we have infinitely many periodic
points in $\C$ realising every rational number in $\rho_{\C}(F)$, for any lift $F$ of $f$. Hence the last theorem can be applied, so we obtain Theorem C.

}

\section{Entropy versus rotation set for circloids.}\label{chapterperttwist}

Let us recall the basic definitions. We considered $\A=\mathbb{S}^1 \times \R$ where $\mathbb{S}^1 = \R/_{\Z}$,
and the usual covering $\pi:\R^2\to\A$ given by $\pi(x,y)= (x\ (\text{mod } \Z),y)$.

\smallskip

Consider the \emph{integrable twist map} $\tau:\A\to\A $ given by the lift:
$$T(x,y)=(x+y ,y) \,$$

If we denote by $\mathcal{F}=\{C_y\}_{y\in\R}$ the foliation of $\A$ by essential circles given by $C_y=\pi(\R \times \{y\})$, we have that $\tau|_{C_y}$ is a rotation of angle \ro{$2\pi y$}.

\begin{obs}\label{rem-twist}
A simple computation gives that $\lim_n\frac{1}{n} \log \|D\tau^n\| = 0$. This can be combined with
Proposition \ref{p.continuityentropy} to get that given $\eps>0$ there is a $C^1$-neighborhood $\mathcal{N}_\eps$ of
$\tau$ such that $h_{top}(f) < \eps$ for all $f \in \mathcal{N}_\eps$ .
\end{obs}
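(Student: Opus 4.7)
The plan is to verify the limit by a direct computation on the lift, and then invoke Proposition \ref{p.continuityentropy} verbatim. First I would note that the lift $T(x,y)=(x+y,y)$ iterates to
$$T^n(x,y)=(x+ny,y),$$
so the differential is constant in $(x,y)$ and given by the Jordan block
$$DT^n_{(x,y)}=\begin{pmatrix} 1 & n \\ 0 & 1 \end{pmatrix}.$$
Since $\tau$ is a quotient of $T$ by the deck transformation $(x,y)\mapsto(x+1,y)$, we have $\|D\tau^n_{(x,y)}\|=\|DT^n\|$, which is independent of the base point.

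Next I would estimate the operator norm of this Jordan block. An explicit computation with the singular values (or the easy sandwich $n\leq \|DT^n\|\leq n+2$ obtained by applying the matrix to the unit vector $(0,1)$ and then by the triangle inequality) gives
$$\|D\tau^n\|=\|DT^n\|=\Theta(n)$$
as $n\to\infty$. Consequently
$$\lim_{n\to\infty}\frac{1}{n}\log\|D\tau^n\|=\lim_{n\to\infty}\frac{\log \Theta(n)}{n}=0,$$
which establishes the first assertion of the remark.

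Finally, since $D\tau$ is uniformly bounded on the (non-compact) annulus $\A$, the hypothesis of Proposition \ref{p.continuityentropy} holds for $\tau$ (the statement is used for the uniform norm $\|D\tau^n\|$, which is well defined here). Applying that proposition gives a $C^1$-neighborhood $\mathcal N_\varepsilon$ of $\tau$ with $h_{\textrm{top}}(f)<\varepsilon$ for every $f\in\mathcal N_\varepsilon$.

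There is no real obstacle; the only mildly delicate point is that $\A$ is not compact, but this is harmless because $\|D\tau^n\|$ is bounded uniformly in the base point, so the inequality $h_{\textrm{top}}(\tau^n)\leq 2\log\sup_{x}\|D\tau^n_x\|$ used in the proof of Proposition \ref{p.continuityentropy} is still meaningful. One may, if preferred, restrict to any compact essential sub-annulus that is forward invariant under a small $C^1$-perturbation before applying the proposition, without altering the argument.
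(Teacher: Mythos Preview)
Your proof is correct and is exactly the ``simple computation'' the paper alludes to: iterate the lift to get the Jordan block $DT^n=\begin{pmatrix}1&n\\0&1\end{pmatrix}$, observe its norm grows linearly so $\frac{1}{n}\log\|D\tau^n\|\to 0$, and then invoke Proposition~\ref{p.continuityentropy}. The paper gives no further details beyond this, so your argument coincides with the intended one (and your remark on non-compactness is a reasonable extra care).
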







We will prove the following theorem.

\smallskip

\begin{thm}\label{maincircloidlowent}
For every $C^1$-neighborhood $\mathcal{N}$ of $\tau$ there exists $f \in \mathcal{N}$
so that $f$ has a global attractor given by an essential circloid $\mathcal{C}$
with $\rho_{\mathcal{C}}(F)\supset [0,1]$ for some lift $F$ of $f$.
\end{thm}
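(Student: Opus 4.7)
The plan is to produce $f \in \mathcal{N}$ through three successive $C^1$-small perturbations of $\tau$, keeping the closed annulus $\mathcal{A}_0 = \mathbb{S}^1 \times [0,1]$ as the relevant region. Under $\tau$, every point of $C_0 = \mathbb{S}^1\times\{0\}$ is fixed and its lift is fixed (rotation number $0$), while every point of $C_1 = \mathbb{S}^1\times\{1\}$ is fixed and its lift translates by one (rotation number $1$). I will first produce a conservative perturbation carrying hyperbolic saddles on these circles, then tie them together with the connecting lemma, and finally break the circles with a dissipative perturbation to obtain an attracting circloid $\mathcal{C}$ containing both saddles.

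\textbf{Stage 1: Conservative creation of saddles.} By a $C^1$-small area-preserving perturbation $\tau_1 \in \mathcal{N}$ supported in small neighborhoods of $C_0$ and $C_1$, I will create a hyperbolic fixed saddle $p \in C_0$ (rotation number $0$) and $q \in C_1$ (rotation number $1$), keeping $C_0$, $C_1$ and $\mathcal{A}_0$ invariant. A preliminary Kupka--Smale type perturbation inside $\mathcal{A}_0$ ensures $\tau_1 \in \textrm{Diff}^{\ 1}_{\nu,\mathrm{per}}(\mathcal{A}_0)$, so that Theorem \ref{connecting2} is applicable.

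\textbf{Stage 2: Heteroclinic cycles via the connecting lemma.} Because $\tau_1$ is area preserving, Poincar\'{e} recurrence makes every point of $\mathcal{A}_0$ chain recurrent. Hence for each $\varepsilon>0$ there are $\varepsilon$-pseudo-orbits from a point of $W^u(p,\tau_1)$ to a point of $W^s(q,\tau_1)$, and symmetrically from $W^u(q,\tau_1)$ to $W^s(p,\tau_1)$, whose ``jumps'' can be confined to a compact set $K\en\mathrm{int}(\mathcal{A}_0)$ disjoint from $\partial\mathcal{A}_0$. Applying Theorem \ref{connecting2} (twice, and staying inside $\mathcal{N}$) yields a conservative perturbation $\tau_2 \in \mathcal{N}$ for which $W^u(p,\tau_2)\cap W^s(q,\tau_2)$ and $W^u(q,\tau_2)\cap W^s(p,\tau_2)$ are both nonempty. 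Remark \ref{remark:smooth} guarantees that $\tau_2$ can be taken smooth, and a further $C^1$-small area-preserving perturbation makes the intersections transverse, so $p$ and $q$ are linked by a transverse heteroclinic cycle (a ``horseshoe-type'' configuration), all of which is $C^1$-robust.

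\textbf{Stage 3: Breaking the boundary circles to produce an attracting circloid.} Finally I will perform a $C^1$-small, localized, non-conservative perturbation $f=\tau_3\in\mathcal{N}$, supported in thin neighborhoods of $C_0$ and $C_1$ and leaving the hyperbolic saddles $p$, $q$ and their heteroclinic intersections untouched. The perturbation is chosen so that $C_0$ and $C_1$ are pushed strictly into $\mathrm{int}(\mathcal{A}_0)$, and so that $f$ extends to $\A$ so that $f(\mathcal{A}')\en\mathrm{int}(\mathcal{A}')$ for some compact essential annulus $\mathcal{A}'\supset\mathcal{A}_0$ and so that every orbit in $\A\setminus\mathcal{A}'$ enters $\mathcal{A}'$ in finite time (e.g.\ using a vector field pointing toward $\mathcal{A}_0$). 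The global attractor $K = \bigcap_n f^n(\mathcal{A}')$ is then an annular continuum with empty interior; by the Birkhoff-type argument one can arrange this $K$ to be irreducible, so it equals a circloid $\mathcal{C}$ (alternatively, one may pass to the unique invariant circloid inside $K$, which is still a global attractor once the boundary contraction is strong enough near $C_0$ and $C_1$ and very weak near the saddles). Since $p$ and $q$ are hyperbolic, they persist in $\mathcal{C}$ with unchanged rotation numbers $0$ and $1$ for the appropriate lift $F$. As the rotation set over the connected invariant set $\mathcal{C}$ is an interval, $\rho_{\mathcal{C}}(F)\supset[0,1]$, as desired. Throughout, $f$ remains in $\mathcal{N}$ by construction (combined with Remark \ref{rem-twist}, this also yields the entropy bound of Theorem B).

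\textbf{Main obstacle.} The delicate step is Stage 3: I must make the annulus genuinely attracting and collapse onto an \emph{irreducible} annular continuum (rather than, e.g.\ a set containing persistent essential invariant curves), while simultaneously preserving the transverse heteroclinic tangle built in Stage 2 and the precise rotation numbers of $p,q$. This is handled by choosing the dissipative bump flat enough to touch only collar neighborhoods of $C_0$ and $C_1$ disjoint from the heteroclinic cycle, combined with a choice (by the standard Birkhoff attractor argument) of the internal circloid $\mathcal{C}\subset K$ that must contain every $f$-periodic point lying in $\partial K$ with a well-defined rotation number, and in particular both $p$ and $q$.
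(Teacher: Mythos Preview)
Your Stages 1 and 2 are close in spirit to the paper's first two perturbations and the use of Theorem \ref{connecting2} via Lemma \ref{connectingK}. The genuine gap is Stage 3. You assert that a $C^1$-small dissipative bump supported near $C_0$ and $C_1$ produces a global attractor $K$ that is an \emph{irreducible} annular continuum containing $p$ and $q$, but none of these claims follows from what you have arranged. Since your perturbation is conservative on the bulk of $[C_0,C_1]$ (this is what you need to keep the heteroclinic tangle intact), the attractor $K=\bigcap_n f^n(\mathcal{A}')$ need not have empty interior, let alone be a circloid; the ``Birkhoff-type argument'' you invoke requires uniform dissipation (Jacobian bounded away from $1$ everywhere), which you explicitly do not have. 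Your fallback, ``pass to the unique invariant circloid inside $K$,'' does not yield a global attractor either, and there is no reason the saddles $p,q$ must lie on it. In short, Stage 3 as written is a wish, not a construction.

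The paper's solution to exactly this difficulty is more elaborate and uses a different mechanism. On each boundary circle it places not just a saddle but a saddle together with a \emph{saddle-node} $p_i$, arranged so that $W^u(x_i)=C_i\setminus\{p_i\}$; the connecting lemma is then used to hook the \emph{saddle-node} unstable arcs $I_i^u$ (pointing into $(C_0,C_1)$) to the stable manifolds of the opposite saddles. The final perturbation is not a dissipative collar: it is a pair of localized bumps $b_0,b_1$ that push a single \emph{wandering} sub-arc of each $C_i$ into the open annulus. With this structure one can \emph{define} the candidate circloid as $\mathcal{B}=\overline{W^u(x_0,f)}$, use a $\lambda$-lemma near the saddle-nodes to show $\mathcal{B}=\partial\mathcal{U}^+=\partial\mathcal{U}^-$ (hence a circloid), and then prove it is the global attractor by the explicit bookkeeping of Lemma \ref{atractor}, which tracks how the pushed wandering arcs sweep out all of $\mathcal{U}^\pm$ under forward iteration. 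Your two-saddle picture lacks the saddle-node/unstable-arc scaffolding that makes each of these verifications possible.
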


\smallskip

Combining this with  remark \ref{rem-twist}, we show that there are circloids with rotation sets containing $[0,1]$ whose entropy approaches zero as much as desired,
therefore proving Theorem B.  Notice that the twist condition is $C^1$-open so that we can assume also that the obtained diffeomorphism verifies the twist condition.
To obtain a dissipation hypothesis (as required in Theorem D) one has to perform a slightly different perturbation which is explained at the end of this section.

\smallskip

We fix $\mathcal{N}$ and construct $f\in\mathcal{N}$ by means of a sequence of $C^1$ perturbations of $\tau$. We remark that all the perturbations are just $C^1$ small,
but the map itself can be considered to be smooth (see remark \ref{remark:smooth}).

\subsection{First perturbation}
We first fix some notation. For $y <y'$ we denote by $[C_y,C_{y'}]$ to the compact region between these two circles, and by $(C_y,C_{y'})$ its interior.

\smallskip

As usual, given a map $f:M\to M$ and a point $x\in M$, we define for $\varepsilon>0$ the local stable set of $x$ by
$W^s_{\varepsilon}(x,f)=\{y\in M\ |\ d(f^n(y),f^n(x))<\varepsilon\mbox{ for all }n\in\N\}$, and define the stable set of $x$
as $W^s(x,f)=\{y\in M\ |\ \lim_n d(f^n(x), f^n(y))=0\}$. The local unstable and unstable sets are defined by considering $f^{-1}$
instead of $f$, i.e. $W^u(x,f)=W^{s}(x,f^{-1})$. When $x$ is a hyperbolic periodic point, the local stable set is a sub-manifold tangent to the stable subspace at $x$,
and it holds $W^s(x,f)=\bigcup_{n\in N}f^{-kn}(W^s_{\varepsilon}(x,f))$ where $k$ is the period of $x$ (see \cite[Section 6]{KatokHasselblatt}).
A similar result holds for unstable manifolds.

\medskip

The first perturbation will be $f_1\in\mathcal{N}$ so that (see Figure \ref{efe1}):

\begin{enumerate}

\item $f_1$ is conservative restricted to the annulus $[C_0,C_1]$.

\item $f_1(C_r)=C_r$ for $r\in\{0,1\}.$

\item $f_1$ has a saddle $x_{0}\in C_{0}$ and a saddle-node $p_{0}\in C_{0}$, so that
 $W^u(x_{0},f_1)=C_{0}\setminus p_{0}$, which implies that $W^s(p_{0},f_1)\supseteq C_{0}\setminus\{x_{0}\}$.

\item $f_1$ has a saddle $x_{1}\in C_{1}$ and a saddle-node $p_{1}\in C_{1}$, so that
 $W^u(x_{1},f_1)=C_{1}\setminus p_{1}$, which implies as before that $W^s(p_{1},f_1)\supseteq C_{1}\setminus\{x_{1}\}$.


\item There is a forward invariant arc $I_{0}^s\subset W^{s}(p_{0},f_1)\cap (-\infty,C_{0}]$ with one endpoint at $p_{0}$, and a small backward invariant compact arc $I_{0}^u\subset W^u(p_{0},f_1)\cap[C_0,C_1]$ with one endpoint in $p_{0}$.

\item There is a forward invariant arc $I_{1}^s\subset W^{s}(p_{1},f_1)\cap [C_{1},+\infty)$ with one endpoint at $p_{1}$, and a small backward invariant compact arc $I_{1}^u\subset W^u(p_{1},f_1)\cap[C_0,C_1]$ with one endpoint in $p_{1}$.

\item $[C_{0},C_{1}]$ is a global attractor for $f_1$.

\item For every $n\in\N$, $f_1$ has finitely many points of period $n$.

\end{enumerate}

This can be done by $C^1$-small smooth perturbations around the circles $C_{0}$ and $C_{1}$ and the Franks' lemma \cite{FranksLemma} (see \cite[Proposition 7.4]{BDP}
for the conservative version) for suitable perturbations of the derivative in the conservative setting.
To obtain (7), one can just take a dissipative perturbation supported in $(C_0,C_1)^c$. Item (8) can be achieved
by means of usual arguments in generic dynamics: a simple Baire argument allows to find a smooth diffeomorphism nearby for which all periodic points in the
interior of the annulus have no eigenvalues equal to $\pm 1$, and this implies that the set of those having period $n$ is finite for all $n\in\N$.
This first perturbation is depicted in Figure \ref{efe1}.
\small

\begin{figure}[ht]\begin{center}
 \psfrag{C0}{$C_0$}\psfrag{x0}{$x_{0}$}\psfrag{p0}{$p_0$}\psfrag{xdelta}{$x_{1/4}$}\psfrag{x1menosdelta}{$x_{3/4}$}
 \psfrag{C1}{$C_{1}$}\psfrag{x1}{$x_1$}\psfrag{p1}{$p_1$}
 \psfrag{inf}{$+\infty$}

\centerline{\includegraphics[height=9cm]{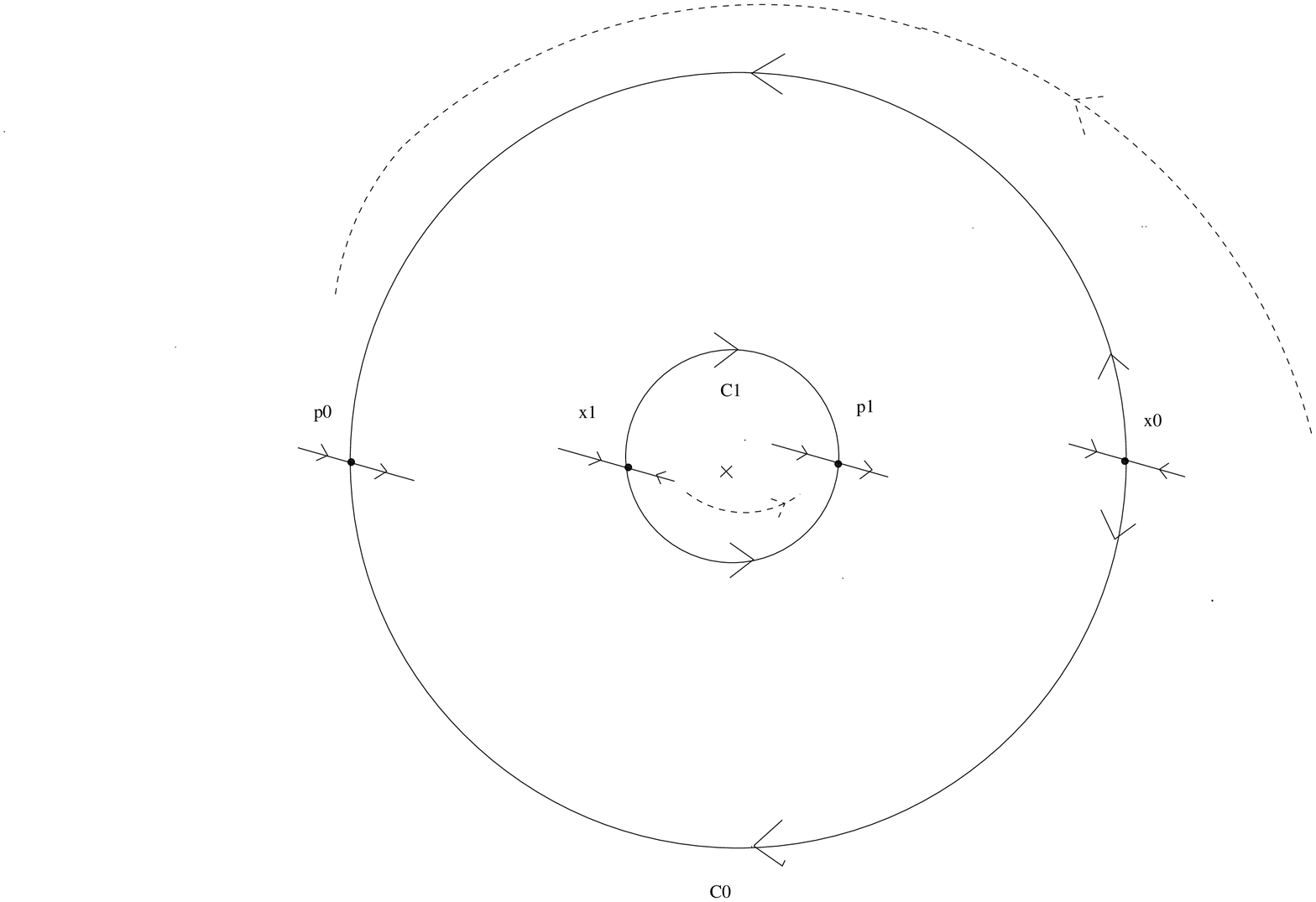}}
\caption{The map $f_1$.}\label{efe1}
\end{center}\end{figure}

\normalsize

\subsection{Second perturbation}

For the second perturbation, we make use of  Theorem \ref{connecting2}. We construct $f_2\in\mathcal{N}$ so that:

\begin{enumerate}

\item $f_2$ is conservative in $[C_0,C_1]$,

\item $f_2(x)=f_1(x)$ outside $(C_{r_1},C_{r_2})$ for some values $0<r_1 <r_2 <1$,

\item there is a transverse intersection between the connected component of $I^u_0 \cap [C_0,C_{r_1}]$ containing $p_0$ and $W^s(x_1,f_2)$ and a transverse intersection between the connected component of $I^u_1 \cap [C_{r_2},C_{1}]$ containing $p_1$ and $W^s(x_0,f_2)$.



\end{enumerate}

\begin{obs}\label{rem-instabilityregion}
The diffeomorphism $f_2$ restricted to $[C_0,C_1]$ is a conservative annulus diffeomorphism which deviates the vertical and the whole annulus is an instability region. In particular, the rotation set in this instability region is $[0,1]$ and the entropy can be chosen to be as small as desired.
\end{obs}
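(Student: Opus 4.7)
The plan is to verify the four assertions of the remark in succession, leveraging the construction of $f_2$ and its $C^1$-proximity to $\tau$. Conservativity of $f_2|_{[C_0,C_1]}$ is immediate from property (1) of the construction. The deviation of the vertical (the twist condition) holds for $\tau$ with a uniform constant, as $DT(0,1)=(1,1)$, and since this is a $C^1$-open property, by choosing the neighborhood $\mathcal{N}$ sufficiently small we keep this condition for $f_2$.

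Next I would check that $[C_0,C_1]$ is an instability region with boundary components $C_0$ and $C_1$. Both $C_0$ and $C_1$ are essential invariant circles for $f_2$, hence essential annular continua, and they bound the compact $f_2$-invariant annulus $[C_0,C_1]$. Property (3) of $f_2$ provides a transverse intersection point $z_0$ between the connected component of $I^u_0\cap[C_0,C_{r_1}]$ containing $p_0$ and $W^s(x_1,f_2)$. Since $z_0\in W^u(p_0,f_2)$ with $p_0\in C_0$, the $\alpha$-limit set of $z_0$ is $\{p_0\}\subset C_0$; since $z_0\in W^s(x_1,f_2)$ with $x_1\in C_1$, the $\omega$-limit of $z_0$ is $\{x_1\}\subset C_1$. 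Symmetrically, the second transverse intersection given by property (3) produces a point $z_1$ with $\alpha$-limit on $C_1$ and $\omega$-limit on $C_0$. This verifies the definition of instability region.

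For the rotation set, choose the lift $F$ of $f_2$ obtained by following the $C^1$-deformation from $\tau$ to $f_2$ starting with the lift $T(x,y)=(x+y,y)$. The fixed points $x_0,p_0$ of $f_2$ lie on $C_0$ and, since $F$ is $C^1$-close to $T$ which is the identity on $\R\times\{0\}$, for any lift $\tilde p_0\in\pi^{-1}(p_0)\subset\R\times\{0\}$ one has $F(\tilde p_0)=\tilde p_0$, giving rotation number $0$ for $p_0$ (and likewise for $x_0$). The fixed points $x_1,p_1$ lie on $C_1$ and since $T$ restricted to $\R\times\{1\}$ is the translation by $(1,0)$, for any lift $\tilde p_1\in\pi^{-1}(p_1)\subset\R\times\{1\}$ one has $F(\tilde p_1)=\tilde p_1+(1,0)$, so the rotation number of $p_1$ (and $x_1$) is $1$. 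Since $[C_0,C_1]$ is a compact, connected, $f_2$-invariant subset of $\A$, the rotation set $\rho_{[C_0,C_1]}(F)$ is an interval (by the annular version of Misiurewicz--Ziemian recalled in the introduction) containing $\{0,1\}$, hence containing $[0,1]$.

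Finally, the smallness of the entropy follows from Remark \ref{rem-twist}: given any $\varepsilon>0$, choose at the outset the neighborhood $\mathcal{N}=\mathcal{N}_\varepsilon$ so that $h_{\text{top}}(g)<\varepsilon$ for every $g\in\mathcal{N}$; then $h_{\text{top}}(f_2|_{[C_0,C_1]})\leq h_{\text{top}}(f_2)<\varepsilon$. The main subtlety in the proposal is the instability-region step, namely verifying that the $\alpha$- and $\omega$-limits of $z_0$ and $z_1$ are forced onto the correct boundary circles. This is handled by the precise placement of the forward/backward invariant arcs $I^{u/s}_0,I^{u/s}_1$ constructed in $f_1$ together with the fact, from property (2) of $f_2$, that the perturbation producing $f_2$ is supported in $(C_{r_1},C_{r_2})$ and hence does not disturb the invariant arcs near $C_0$ and $C_1$.
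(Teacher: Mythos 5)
The paper does not give an explicit proof of this remark; it is stated as an observation whose justification is implicit in the construction of $f_2$. Your blind proof correctly spells out the four points in the natural way, and in particular the verification of the instability-region condition via the two transverse heteroclinic connections (between $W^u(p_0,f_2)$ and $W^s(x_1,f_2)$, and symmetrically) is exactly the intended one, with the right observation that the $I^{u}_i$ arcs persist near $C_0,C_1$ because $f_2=f_1$ off $(C_{r_1},C_{r_2})$. One small mismatch of phrasing: the remark asserts the rotation set \emph{is} $[0,1]$ whereas you only prove the inclusion $\rho_{[C_0,C_1]}(F)\supset[0,1]$. The inclusion is all that the paper uses (and is all that Theorem B asserts); for the reverse inclusion one would appeal to the twist condition together with the invariance of $C_0$ and $C_1$, which for a conservative monotone twist map on a bounded annulus pins the rotation set between the boundary rotation numbers. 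Other than this cosmetic point your argument is correct and matches the paper's implicit reasoning.
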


\small

\begin{figure}[ht]\begin{center}
 \psfrag{C0}{$C_0$}\psfrag{x0}{$x_{0}$}\psfrag{p0}{$p_0$}\psfrag{xdelta}{$x_{1/4}$}\psfrag{x1menosdelta}{$x_{3/4}$}
 \psfrag{C1}{$C_{1}$}\psfrag{x1}{$x_1$}\psfrag{p1}{$p_1$}
 \psfrag{inf}{$+\infty$}

\centerline{\includegraphics[height=9cm]{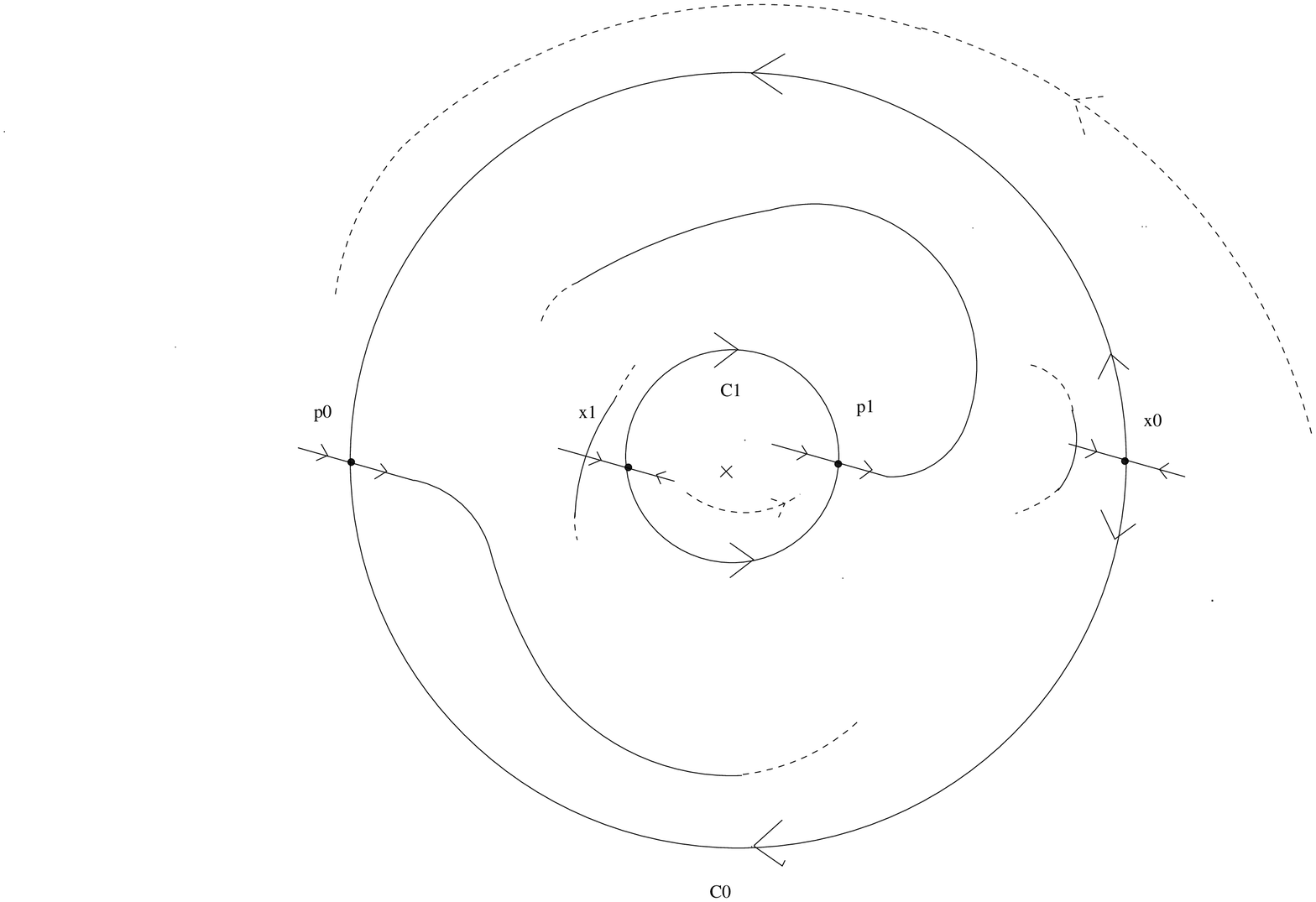}}
\caption{The map $f_2$.}\label{efe2}
\end{center}\end{figure}

\normalsize

In order to produce $f_2$ we just have to choose a perturbation of $f_1$ in $\mathcal{N}$ which is conservative in $[C_0,C_1]$, supported outside a \az{neighbourhood} of $C_0$ and $C_1$ in $[C_0,C_1]$ and connects the forward orbit of a small arc in $I^u_0$ (inside the \az{neighbourhood} where the perturbation is made) with the stable manifold of $x_1$ and symmetrically connects the forward orbit of $I^u_1$ with the stable manifold of $x_0$ . See figure \ref{efe2}.

\smallskip

This will be achieved by means of Theorem \ref{connecting2}. But first we need to show an abstract lemma to put ourselves in the hypothesis of the theorem.

\begin{lema}\label{connectingK}
Assume $h: [C_0,C_1] \to [C_0,C_1]$ is an area preserving diffeomorphism and $D$ is a connected open subset whose closure is contained in $(C_0,C_1)$. Let $z,w$ be points in $(C_0,C_1)$ such that there are integers $n_z>0$ and $n_w>0$ so that $h^{n_z}(z)$ and $h^{-n_w}(w)$ are contained in $D$. Then $z \dashv_{\textrm{cl}[D]} w$.
\end{lema}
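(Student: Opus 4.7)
The plan is to construct, for every $\varepsilon>0$, an explicit $\varepsilon$-pseudo-orbit from $z$ to $w$ whose jumps all have both endpoints in $\textrm{cl}[D]$, by splicing three phases. The outer phases are true orbits: the forward orbit $z_0=z,\, z_1=h(z),\ldots,\, z_{n_z-1}=h^{n_z-1}(z)$, and symmetrically the forward orbit $b, h(b),\ldots,h^{n_w}(b)=w$ where $b:=h^{-n_w}(w)\in D$. These involve no jumps, so there is no constraint to check; all the real content is confined to a middle phase that bridges $a:=h^{n_z}(z)\in D$ with $b\in D$ through $\textrm{cl}[D]$.

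For the middle phase, I would exploit the connectedness of $D$ and Poincar\'e recurrence. Since $D$ is open and connected and $\textrm{cl}[D]$ is compact in $(C_0,C_1)$, there is a compact arc $\gamma\subset D$ joining $a$ and $b$, with $\delta:=\mathrm{dist}(\gamma,M\setminus D)>0$. Fix $\eta>0$ much smaller than $\min(\varepsilon,\delta)$ and adapted to the modulus of continuity of $h$. Sample $\gamma$ into finitely many waypoints of successive distance below $\eta$, and replace each interior waypoint by a nearby recurrent hub $c_i\in D$ together with a return time $N_i$ satisfying $d(h^{N_i}(c_i),c_i)<\eta$; the existence of such hubs is guaranteed by Poincar\'e recurrence applied to $h$ on the compact area-preserving system $[C_0,C_1]$, which makes recurrent points dense in any open subset.

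The middle pseudo-orbit then reads: a jump from $h^{n_z-1}(z)$ to $c_1$, whose source is $h(h^{n_z-1}(z))=a\in D$ and whose target is $c_1\in D$, both in $\textrm{cl}[D]$, with jump size less than $\eta<\varepsilon$; followed for each $i$ by the true-orbit loop $c_i, h(c_i),\ldots,h^{N_i-1}(c_i)$ and a jump to $c_{i+1}$ whose source $h^{N_i}(c_i)$ lies in $D$ by the recurrence estimate and whose target is $c_{i+1}\in D$ (total size $\le 2\eta$); ending with a jump landing at $b$, whose source is near $c_M$ near $b$ (inside $D$) and whose target is $b\in D$. Concatenated with the outer true-orbit segments, this yields the desired $\varepsilon$-pseudo-orbit from $z$ to $w$.

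The main obstacle I anticipate is that the distinguished points $a$ and $b$, though in $D$, are not assumed to be recurrent: their own forward (resp.\ backward) orbits under $h$ might leave $D$ forever, so there is no direct way to ``return'' to $D$ by iteration from $a$ or to ``arrive'' at $b$ from inside $D$. The key observation that resolves this is that the source of a jump is the $h$-image $f(z_k)$ of the previous pseudo-orbit point, not that point itself; by placing the very first jump precisely at the step where $a$ would be reached as an $h$-image and, symmetrically, placing the last jump so that $b$ is its target, we bypass any need for the orbits of $a$ or $b$ themselves to recur, needing only that $a,b\in D$ and that recurrent hub points be available inside $D$ -- both of which are at our disposal. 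The rest of the argument is bookkeeping with the parameter $\eta$.
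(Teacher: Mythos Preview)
Your proof is correct and takes a more constructive route than the paper's. Both arguments reduce, as you do, to connecting any two points of $D$ by an $\varepsilon$-pseudo-orbit whose jumps lie in $D$; the paper then proceeds abstractly: fixing $p\in D$, it shows the set $P_\varepsilon\subset D$ of points reachable from $p$ is nonempty (Poincar\'e recurrence gives $p\in P_\varepsilon$), open (perturb the terminal point), and closed in $D$ (insert a recurrent loop near a limit point), whence $P_\varepsilon=D$ by connectedness. You replace this open--and--closed trick by an explicit chain of recurrent hubs sampled along an arc in $D$, which makes the roles of path-connectedness and recurrence completely transparent at the price of a little more bookkeeping (your ``$2\eta$'' estimates should really be $4\eta<\varepsilon$ and $2\eta<\delta$, and the modulus of continuity of $h$ is not actually needed, but neither point affects the argument). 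The two proofs rest on exactly the same ingredients---connectedness of $D$ and density of recurrent points---and your closing remark on why $a$ and $b$ need not themselves be recurrent is well observed; it applies verbatim to the paper's reduction step as well.
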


\begin{proof} Notice that it is enough to show that for every pair of points $p$ and $q$ in $D$ and $\eps>0$ one can construct a pseudo-orbit with jumps in $D$ going from $p$ to $q$ since one can go without jumps from the interior of $D$ to the points $z$ and $w$.

We fix $p$ in $D$, and consider for every $\varepsilon>0$ the set $P_{\varepsilon}$ of those points $q\in D$ so that
there exists an $\varepsilon$-pseudo-orbit $(z_k)_{k=0}^{n}$ with $z_0=p,\ z_n=q$ and $h(z_k),\ z_{k+1}\in D$ whenever
$h(z_k)\neq z_{k+1}$. It is enough to prove that $P_{\varepsilon}$ is a non-empty open and closed set in $D$.

\smallskip

For $q\in P_{\varepsilon}$ we can consider an $\varepsilon$-pseudo-orbit $(z_k)_{k=0}^n$ as before. Then,
there exists $\varepsilon'$ such that $d(h(z_{n-1}),q)<\varepsilon'<\varepsilon$. Pick a \az{neighbourhood} $V$ of $q$ in $D$,
so that $V\subset B(q,\varepsilon-\varepsilon')$ and take $z\in V$.

\begin{itemize}

\item if $h(z_{n-1})=q$, we have that $z_0,\dots,z_{n-1},z$ is a $\varepsilon$ pseudo-orbit whose jumps are in $D$. Thus $z\in P_{\varepsilon}$, and $V\subset P_{\varepsilon}$, so $P_{\varepsilon}$.

\item If $h(z_{n-1})\neq q$, then both $h(z_{n-1})$ and $z$ are contained in $D$. Thus the pseudo-orbit $z_0,\dots,z_{n-1},z$
is a $\varepsilon$ pseudo-orbits who has it jumps in $D$. Thus, we have again $V\subset P_{\varepsilon}$.

\end{itemize}

Therefore, we can conclude that $P_{\varepsilon}$ is open. In order to check that it is also closed in $D$, we consider a sequence of points $q_n\in P_{\varepsilon}$ converging to a point $q$ in $D$. Fix $q_n$ so that $d(q_n,q)<\varepsilon$ and let $V$ be a \az{neighbourhood} of $q_n$ in $D$, contained in $B(q,\varepsilon)$.

\smallskip

Consider an $\varepsilon$-pseudo-orbit $p=z_0,\dots,z_{m}=q_n$ with jumps inside $D$. Hence, $d(h(z_{m-1}),q_n)<\varepsilon$. Poincar\'{e}'s recurrence Theorem (see \cite[Section 4.1]{KatokHasselblatt}) implies that we can consider a recurrent point $r\in V$ so that $d(h(z_{m-1}),r)<\varepsilon$. Let $h^l(r)\in V$ and define the pseudo-orbit

$$p=z_0,\dots , z_{m-1}, r,h(r),\dots,h^{l-1}(r), q\ .$$

Then, we have an $\varepsilon$-pseudo-orbit from $p$ to $q$ whose jumps are all contained in $D$.

To show that $P_\varepsilon$ is non-empty, notice that again by Poincar\'{e}'s recurrence theorem, one has that $p \in P_\varepsilon$.

\end{proof}

Now let us construct the desired perturbation of $f_1$.

\smallskip

\ro{For $f_1\in \textrm{Diff}_{\nu,per}^{\ 1}(\A)$ and the prescribed neighborhood $\mathcal{N}$ let $N=N(f_1,\mathcal{N})$
be the positive integer given by Theorem \ref{connecting2}}.
We consider first the set $D\subset(C_0,C_1)$ given by $D_0= (C_{a_0},C_{b_0})$ so that the arc $I^u_0$ and the invariant manifold $W^s(x_1,f_1)$ intersects $D_0$.  Choose $0<a_1<a_0$ and $b_0< b_1<1$ so that $D_1 = (C_{a_1},C_{b_1})$ contains $\textrm{cl}[D_0 \cup \ldots \cup f_1^{N-1}(D_0))]$.

 Choose a point $z \in I^u_0 \setminus D_1$ and $w \in W^s(x_1,f_1)  \setminus D_1$. It follows from Lemma \ref{connectingK} that one has $z \dashv_{D_0} w$. Theorem \ref{connecting2} implies that there exists $g \in \cU$ such that $g^n(z) = w$ and such that $g=f_1$ outside $D_1$. Due to the way $w$ is chosen, and since $g=f_1$ outside $D_1$, it follows that $w$ still belongs to $W^s(x_1,g)$ after perturbation\footnote{Technically one has to choose $z \neq p_0$ in the connected component of $I^u_0 \setminus D_1$ containing $p_0$ and $w\neq x_1$ in the connected component of $W^s(x_1,f_1) \setminus D_1$ containing $x_1$.} and the same holds for $I^u_0$ so we deduce that $I^u_0$ intersects $W^s(x_1,g)$. A further small perturbation makes this intersection transversal. Being transversal, the intersection will persist \ro{for sufficiently} small $C^1$-perturbations even if the involved points are moved,

 Now, we do the same argument again but reducing further $a_1$ and $b_1$ so that we can connect $I^u_1$ with the stable manifold of $x_0$ and again make the intersection transversal. We can choose the perturbation small enough so that the intersection we had already created persists thanks to transversality. This concludes the proof that $f_2 \in \mathcal{N}$ can be constructed.

\subsection{Final perturbation}
For our last move, we fix $z_{0}$ in one of the connected components of $C_{0}\setminus\{x_{0},p_{0}\}$ and $z_{1}$ in one of the connected components of
$C_{1}\setminus\{x_{1},p_{1}\}$. Consider for $k=0,1$ an open ball $B(z_k,\delta)$ so that
$B(z_k,\delta)\cap C_{k}\ro{=}I_k$ is a wandering interval, i.e., $I_k\cap \bigcup_{n\in\Z\setminus\{0\}}f_2^n(I_k)=\emptyset$.

\smallskip

We now take two $C^{\infty}$-diffeomorphisms $b_{0}$ and $b_{1}$ which are arbitrary $C^{\infty}$-close to the identity, supported in $B(z_{0},\delta)$ and $B(z_{1},\delta)$, defined as follows.

\smallskip


If we set for every $p\in\R^2$ the coordinates $\tilde{x}=\pi_1(p-z_{0})$ and $\tilde{y}=\pi_2(p-z_{0})$\footnote{Here $\pi_1$ and $\pi_2$ stay for the projections over the first and second coordinate in $\R^2$.}, the first map is given by

$$b_{0}(p)=(\tilde{x},\tilde{y}+\mu(\tilde{x},\tilde{y}))\ ,$$
where $\mu:\R^2\to [0,1]$ is some $C^{\infty}$ \emph{bump} function which \ro{is zero in $B(0,\delta)^c$ and positive
in $B(0,\delta)$. Note that $I_0\cup b_0(I_0)$ is the boundary of an open disk contained in $(C_0,C_1)$.}

\smallskip

For $b_{1}$, if we now set for every $p\in\R^2$ the coordinates $\tilde{x}=\pi_1(p-z_{1})$ and $\tilde{y}=\pi_2(p-z_{1})$, we define

$$b_{1}(p)=(\tilde{x},\tilde{y}-\mu(\tilde{x},-\tilde{y}))\ .$$
\ro{Note that $I_1\cup b_1(I_1)$ is the boundary of an open disk contained in $(C_0,C_1)$.}
\ro{Let us call} by $L_{0}$ the open disk between $I_{0}$ and $b_{0}(I_{0})$ and
$L_{1}$ the open disk in-between $I_{1}$ and $b_{1}(I_{1})$.

\smallskip

We are ready now to perform our final perturbation. We consider $f\in\mathcal{N}$ so that

$$f=b_{1}\circ b_{0}\circ f_2\ ,$$

where the following holds:

\begin{enumerate}


\item Property (3) of the second perturbation $f_2$ still holds.

\item $\lim_n f^{-n}(l)=- \infty$ for all $l\in L_{0}$,

\item $\lim_n f^{-n}(l)=+\infty $ for all $l\in L_{1}$.

\end{enumerate}

Indeed, the choice of $b_0$ and $b_1$ imply immediately the last two properties and if $b_0,\ b_1$ are small enough then the transverse
intersections required in (3) of $f_2$ still holds. Notice that $f=f_2$ in a neighborhood of $x_0$, $x_1$, $p_0$ and $p_1$. See Figure \ref{efen} for a
schematic drawing.

\small

\begin{figure}[ht]\begin{center}
 \psfrag{H}{$H$}\psfrag{x0}{$x_{0}$}\psfrag{p0}{$p_0$}
 \psfrag{Bz1delta}{$B(z_1,\delta)$}\psfrag{Bz0delta}{$B(z_0,\delta)$}\psfrag{x1}{$x_1$}\psfrag{p1}{$p_1$}
\psfrag{C0}{$C_0$}\psfrag{C1}{$C_1$}\psfrag{z1}{$z_1$}\psfrag{z0}{$z_0$}

\centerline{\includegraphics[height=9cm]{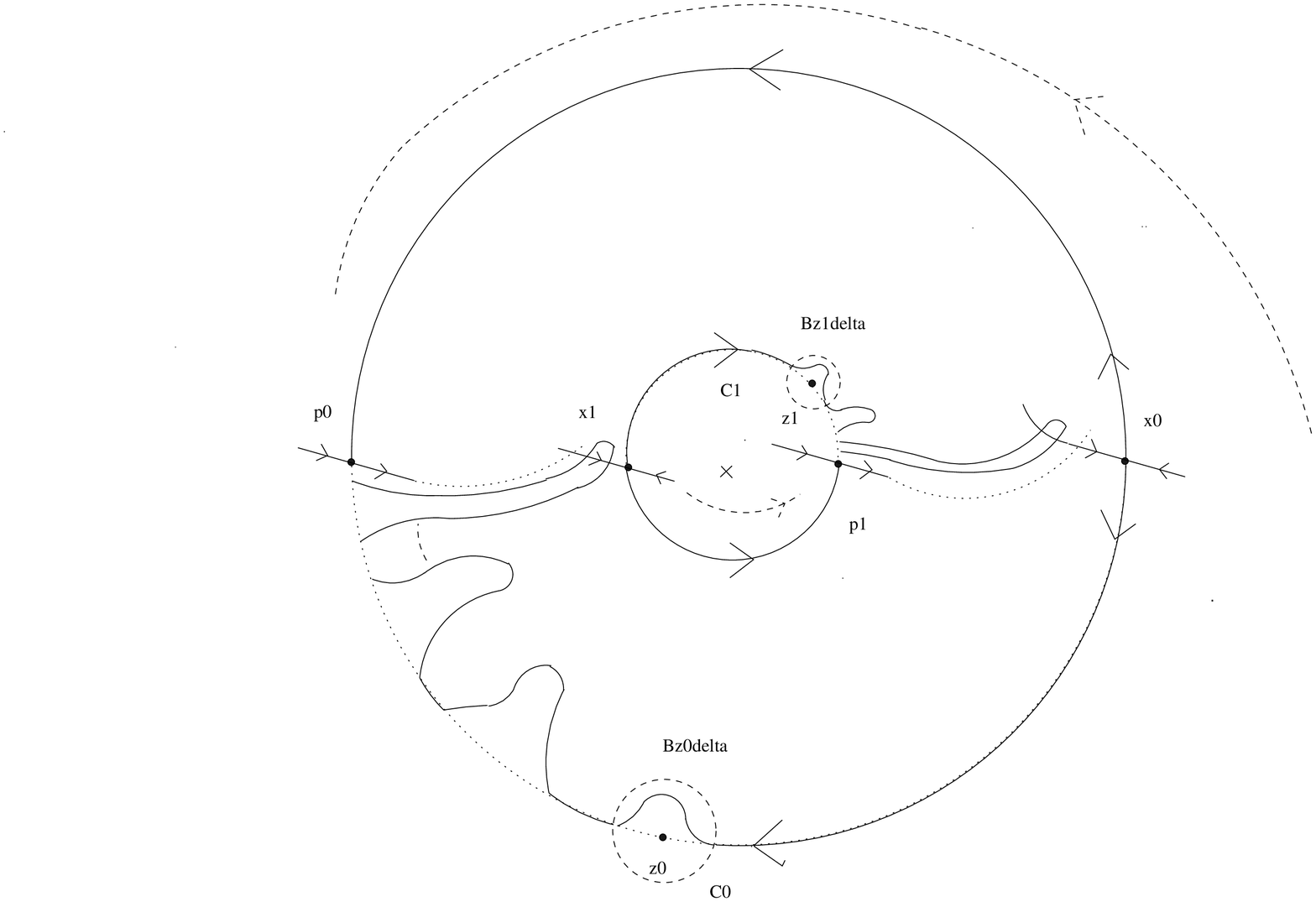}}
\caption{The final map $f$. We perform a small perturbation near $z_0,\ z_1$ so that $x_0,p_0,x_1,p_1$ belong to the same homoclinic class. The closure of $W^u(x_0,f)$ will give our desired circloid.}\label{efen}
\end{center}\end{figure}

\normalsize

\subsection{The perturbation verifies the announced properties}

We must now show that $f$ verifies our theorem \ref{maincircloidlowent}. Consider the set

$$\mathcal{B}=\textrm{cl}[W^u(x_{0},f)]\ $$

Observe that it is a closed connected set. The next lemma shows that it coincides with $\textrm{cl}[W^u(x_{1},f)]$, and by construction
$\mathcal{B}\subset [C_{0},C_{1}]$. Thus, we actually have that $\mathcal{B}$ is an essential continuum
with $\rho_{\mathcal{B}}(F)\supseteq [0,1]$ for some suitable lift $F$ of $f$. Let us call $\mathcal{U}^-$ and
$\mathcal{U^+}$ the two unbounded connected components of $\A\setminus\mathcal{B}$.

\smallskip

\begin{lemma}\label{l.homoclinicconextionx_0} The points $x_0$ and $x_1$ are homoclinically related.
\end{lemma}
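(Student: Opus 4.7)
The plan is to establish the two transverse homoclinic intersections $W^u(x_0,f)\pitchfork W^s(x_1,f)\neq\emptyset$ and $W^u(x_1,f)\pitchfork W^s(x_0,f)\neq\emptyset$; by the symmetry of the construction it will suffice to exhibit the first, the other following by the identical argument at $p_1$ with $b_1$, $I_1$, $I_1^u$ in place of $b_0$, $I_0$, $I_0^u$. The key idea is to locate an arc inside $W^u(x_0,f)$ that is $C^1$-close to $I_0^u$, and then invoke the transverse intersection $I_0^u\pitchfork W^s(x_1,f)$ supplied by property (3) of $f_2$, which persists for $f$ because $b_0,b_1$ are taken $C^1$-small and transverse intersections are $C^1$-robust. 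A first observation: since $b_0,b_1$ are supported in balls around $z_i\in C_i\setminus\{x_i,p_i\}$ and $f_2=f_1$ outside $(C_{r_1},C_{r_2})$, the map $f$ coincides with $f_1$ on open neighborhoods of $x_0,p_0,x_1,p_1$, so these four points are still fixed, $x_0,x_1$ remain hyperbolic saddles, and $p_0,p_1$ remain saddle-nodes with center direction along $C_i$ and strong unstable arcs $I_i^u$.

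The first step will be to show that $b_0(I_0)\subset W^u(x_0,f)$. Setting $J_0':=f_2^{-1}(I_0)\cap C_0$, one has $f|_{C_0}=b_0\circ f_2|_{C_0}$ (because $b_1$ acts trivially away from $C_1$), so $f$ agrees with $f_1$ on $C_0\setminus J_0'$ and sends $J_0'$ exactly onto $b_0(I_0)$. Since $W^u_{\textrm{loc}}(x_0,f)=W^u_{\textrm{loc}}(x_0,f_1)$ is a small arc of $C_0$ at $x_0$ and $W^u(x_0,f_1)=C_0\setminus\{p_0\}$ wraps around $C_0$, after finitely many iterates the extending arc in $W^u(x_0,f)\cap C_0$ contains $J_0'$; one more iteration then yields $f(J_0')=b_0(f_2(J_0'))=b_0(I_0)\subset W^u(x_0,f)$.

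The central step will be to show that the forward iterates $f^n(b_0(I_0))\subset W^u(x_0,f)$ $C^1$-accumulate, on compact pieces, on $I_0^u=W^u(p_0,f)$. Near $p_0$ the map $f$ coincides with $f_1$ at a semi-hyperbolic fixed point whose center manifold is locally $C_0$ (with both sides attracted polynomially to $p_0$ since $W^s(p_0,f_1)\supseteq C_0\setminus\{x_0\}$) and whose strong unstable manifold is $I_0^u$. The arc $b_0(I_0)$ lies just off $C_0$, on the same side of $C_0$ as $I_0^u$. Applying the $\lambda$-lemma for saddle-node fixed points (cf.\ Newhouse--Palis--Takens), the center-projections of the points of $f^n(b_0(I_0))$ onto $C_0$ converge to $p_0$ while the normal component is exponentially stretched and asymptotically aligned with the direction of $I_0^u$; hence sub-arcs of $f^n(b_0(I_0))$ $C^1$-converge to compact pieces of $I_0^u$. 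Letting $y$ be the transverse intersection point of $I_0^u$ with $W^s(x_1,f)$, for large $n$ a sub-arc of $f^n(b_0(I_0))\subset W^u(x_0,f)$ is $C^1$-close to $I_0^u$ near $y$, so by openness of transversality it meets $W^s(x_1,f)$ transversally, giving the desired homoclinic intersection.

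The main obstacle is the $\lambda$-lemma at the saddle-node $p_0$ (and symmetrically at $p_1$): unlike the classical hyperbolic case, one must control how the tangent spaces of the iterated arcs rotate under iteration in the presence of a neutral center direction. An alternative route that avoids invoking any saddle-node $\lambda$-lemma is to $C^1$-approximate $f$ by diffeomorphisms $f^{(\varepsilon)}$ in which each $p_i$ has been hyperbolized by a tiny local perturbation (turning the saddle-node into a hyperbolic saddle plus a sink), to apply the classical hyperbolic $\lambda$-lemma to obtain transverse intersections $W^u(x_i,f^{(\varepsilon)})\pitchfork W^s(x_{1-i},f^{(\varepsilon)})$, and to pass to the limit $\varepsilon\to 0$ using once again the openness of transversality.
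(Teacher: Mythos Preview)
Your argument is correct and follows essentially the same approach as the paper: both rely on a variant of the $\lambda$-lemma at the non-hyperbolic point $p_0$ (and symmetrically $p_1$) to show that the new unstable manifold of $x_0$ accumulates on $I_0^u$, which by the preserved property (3) intersects $W^s(x_1,f)$ transversally. Your treatment is more explicit than the paper's---you carefully verify that $b_0(I_0)\subset W^u(x_0,f)$ by tracking how the growing unstable arc in $C_0$ eventually covers $J_0'$, and you spell out the saddle-node inclination argument---while the paper simply asserts that ``the new unstable manifold of $x_0$ will approach for forward iterates the unstable manifold of $p_0$'' and leaves the local analysis implicit. Your alternative route via a small hyperbolization of $p_0$, $p_1$ followed by the classical $\lambda$-lemma and a limiting argument is a nice addition not present in the paper.
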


\begin{proof}
This follows by applying a small variation of the $\lambda$-lemma \cite{KatokHasselblatt} in a \az{neighbourhood} of $p_0$ (resp. $p_1$).
Notice that the usual $\lambda$-lemma does not apply since $p_0$ is not hyperbolic but by looking at the local dynamics of $p_0$ and the
way we have performed the perturbation $b_0$ (far from $p_0$) one has that the new unstable manifold of $x_0$ will approach for forward
iterates the unstable manifold of $p_0$ which is connected to the stable manifold of $x_1$. The symmetric argument gives that the unstable
manifold of $x_1$ must intersect transversally the stable manifold of $x_0$.
\end{proof}

Furthermore, as we have one branch of $W^{s}(x_{0},f)$ contained in $\mathcal{U}^-$, Lemma \ref{l.homoclinicconextionx_0} shows that
$\partial \mathcal{U}^-\supseteq \mathcal{B}$. In the same way, as the saddle $x_{1}$ is homoclinically related to $x_0$ and one branch of $W^s(x_1,f)$ is contained in $\mathcal{U}^+$,
we have that $W^s(x_{0},f)$ must intersect $\mathcal{U}^+$. Therefore, arguing with the $\lambda$-lemma, we find that $\partial \mathcal{U}^+\supseteq \mathcal{B}$.  So:

$$\mathcal{B}\subset \partial \mathcal{U}^-\cap\partial\mathcal{U}^+.$$

On the other hand, since $\mathcal{U}^\pm$ is a connected component of $\A \setminus \mathcal{B}$, the set $\cU^\pm \cup \mathcal B$ is closed, and in particular, $\partial \cU^\pm \subset \mathcal{B}$, therefore, $\mathcal B = \partial \cU^- = \partial \cU^+$.

\noindent This implies, that $\mathcal{B}$ is the boundary of a circloid $\mathcal{C}$
with $\A\setminus\mathcal{C}=\mathcal{U}^{-}\cup\mathcal{U}^+$ \ve{as it is proved for instance in \cite[Corollary 3.3]{Jaeger})}. In order to obtain \ref{maincircloidlowent}, we need to prove that $\mathcal{C}$ is the global attractor of $f$. 

\medskip

For this, it is enough to show that every point $u\in \mathcal{U}^-$ has
its $\alpha$-limit in $-\infty$ and that every point $v\in\mathcal{U}^+$ has it $\alpha$-limit in $+\infty$. We work with
$\mathcal{U}^-$, the other case is similar. Recall the definition of the open disk $L_0$
associated to the wandering interval $I_0$. We have by construction that $L_0$ is bounded by the concatenation of
curves \ro{$I_0$ and $b_0(I_0)$. Denote by $\tilde{I}_0$ the maximal open interval in $I_0$.}

\smallskip

In order to show that $-\infty=\lim_n f^{-n}(u)$ for all $u\in\mathcal{U}^-$ it is enough to show following lemma.

\begin{lemma}\label{atractor}

We have that $\mathcal{U}^-=(-\infty,C_{0})\cup \bigcup_{n\in\N} f^n(L_{0}\cup \ro{\tilde{I}}_0)$.
\end{lemma}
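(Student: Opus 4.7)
The plan is to prove both inclusions. For the easier direction $(-\infty,C_0)\cup\bigcup_n f^n(L_0\cup\tilde I_0)\subset\mathcal U^-$, I handle each summand. The set $(-\infty,C_0)$ is a connected unbounded open set disjoint from $\mathcal C\subset[C_0,C_1]$ and accumulating at $-\infty$, so it lies in the $-\infty$-component $\mathcal U^-$. Every point of the connected disk $L_0$ has $\alpha$-limit $-\infty$ by property (2) of the final perturbation, so $L_0\cap\mathcal C=\emptyset$; since sufficiently negative iterates enter $(-\infty,C_0)\subset\mathcal U^-$ and $\mathcal U^-$ is $f$-invariant, $L_0\subset\mathcal U^-$. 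For $\tilde I_0$, a direct computation with the explicit formula of $b_0$ (using $\mu>0$ on the interior of the bump) gives $b_0^{-1}(\tilde I_0)\subset(-\infty,C_0)$; since $(-\infty,C_0)$ is $f_2$-invariant, $f^{-1}(\tilde I_0)\subset(-\infty,C_0)\subset\mathcal U^-$, whence $\tilde I_0\subset\mathcal U^-$. Applying $f^n$ and using $f$-invariance of $\mathcal U^-$ completes this direction.

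For the reverse inclusion, denote the right-hand side by $V'$ and argue by connectedness of $\mathcal U^-$. I would first show $V'$ is open in $\A$: the pieces $(-\infty,C_0)$ and $f^n(L_0)$ are plainly open, while at each $p\in\tilde I_0$ a sufficiently small $\A$-ball is contained in $(-\infty,C_0)\cup\tilde I_0\cup L_0\subset V'$ thanks to the positive vertical extent of $L_0$ above $\tilde I_0$ coming from $\mu>0$. The form of $b_0$ yields $f((-\infty,C_0))\subset(-\infty,C_0)\cup L_0$, so $f(V')\subset V'$, and continuity of $f^n$ extends the local statement to every point of $f^n(\tilde I_0)$.

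Closedness of $V'$ in $\mathcal U^-$ will follow from the inclusion $\mathrm{cl}_{\A}(V')\subset V'\cup\mathcal C$. The extra $\A$-closure of each summand, namely $C_0\setminus\tilde I_0$ for $(-\infty,C_0)$ and the pieces $b_0(\tilde I_0)\cup\partial I_0$ together with their $f^n$-iterates for $f^n(L_0\cup\tilde I_0)$, lies in $\mathcal C$; and any accumulation point of $\bigcup_n f^n(L_0\cup\tilde I_0)$ with iterate indices tending to infinity belongs to $\mathrm{cl}[W^u(x_0,f)]=\mathcal C$. Intersecting with $\mathcal U^-$ (which is disjoint from $\mathcal C$) gives $\mathrm{cl}_{\mathcal U^-}(V')=V'$. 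Since $V'$ is nonempty, open, and closed in the connected set $\mathcal U^-$, we conclude $V'=\mathcal U^-$.

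The main obstacle is the final step in the closedness argument: verifying that unbounded forward iterates of $L_0\cup\tilde I_0$ accumulate only in $\mathcal C$. I would handle this by approximating these iterates by the forward orbit of $W^u(x_0,f)$, exploiting that $\tilde I_0\subset C_0$ is adjacent to $C_0\setminus I_0\subset \mathrm{cl}[W^u(x_0,f)]$ and that one application of $b_0$ lifts a neighborhood of $\tilde I_0$ onto $\overline{L_0}$ whose top boundary $b_0(\tilde I_0)$ also sits in $\mathrm{cl}[W^u(x_0,f)]$; inductively, $f^n(L_0\cup\tilde I_0)$ stays in any prescribed neighborhood of $\mathcal C$ for $n$ large, as required.
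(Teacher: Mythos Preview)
Your approach differs from the paper's: you try a clopen/connectedness argument for the hard inclusion, whereas the paper argues directly. Your openness and forward-invariance steps are fine (once one checks, via monotonicity of $\tilde y\mapsto \tilde y+\mu(\tilde x,\tilde y)$, that $b_0((-\infty,C_0))$ stays below $b_0(I_0)$), and the ``fixed $n$'' part of the closedness is also correct. The genuine gap is exactly the step you flag as the main obstacle: you need that any accumulation point of $\bigcup_n f^n(L_0\cup\tilde I_0)$ arising from $n_k\to\infty$ lies in $\mathcal C$. Your sketch for this is circular. Saying ``$f^n(L_0\cup\tilde I_0)$ eventually lies in any prescribed neighbourhood of $\mathcal C$'' is the statement that $L_0\cup\tilde I_0$ is in the basin of $\mathcal C$, which is precisely what the lemma is meant to establish. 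Knowing only that the top boundary $b_0(I_0)$ sits in $\mathcal C$ does not control the size or location of the open disk $f^n(L_0)$ under iteration without already assuming attraction.

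The paper avoids this difficulty by a short direct argument that you could adopt instead of the closedness step. One first records two facts: $f^n(I_0)\subset C_0$ and $f^n(b_0(I_0))\subset\mathcal C$ for all $n$, and $C_0\cap\mathcal C=C_0\setminus\bigcup_n f^n(\tilde I_0)$. Then, given $x\in\mathcal U^-\cap[C_0,+\infty)$, choose a simple arc in $\mathcal U^-$ from $x$ to $-\infty$; it must meet $C_0$ at a point of $C_0\cap\mathcal U^-=\bigcup_n f^n(\tilde I_0)$, say in $f^{n_0}(\tilde I_0)$. The subarc $\Gamma$ from $x$ to that point, taken in $[C_0,+\infty)$, starts on the Jordan curve $f^{n_0}(I_0)\cup f^{n_0}(b_0(I_0))$ bounding the disk $f^{n_0}(L_0)$; if $\Gamma$ left this disk it would have to cross $f^{n_0}(b_0(I_0))\subset\mathcal C$, contradicting $\Gamma\subset\mathcal U^-$. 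Hence $x\in f^{n_0}(L_0)$. This single-path argument replaces your $n\to\infty$ closedness step and closes the gap.
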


\begin{proof}

It is easy to see that $(-\infty,C_{0})\subset\mathcal{U}^-$. Further, as $L_{0}\cup \ro{\tilde{I}}_0\subset \mathcal{U}^-$, we have
that $\mathcal{U}^-\supseteq (-\infty,C_{0})\cup \bigcup_{n\in\N} f^n(L_{0})$. We must look now for the symmetric inclusion.

\medskip

Observe that $f^n(\ro{I_0})\subset C_{0}$ for all $n\in\N$ and that $f^n(\ro{b_0(I_0)})\subset[C_{0},+\infty)\cap\mathcal{C}$ for all $n\in\N$.

\smallskip

\ro{Let $W$ be the interior of the arc in $C_0$ joining $x_0$ and $p_0$ and containing $I_0$}. Observe that the closure of the complementary connected component is contained in $\mathcal{C}$. Then it holds
$$ C_{0}\cap \mathcal{C}= C_{0}\setminus \left(\bigcup_{n\in\N} f^n(\ro{\tilde{I}_0})\right).  $$

Assume $x\in\mathcal{U}^-\cap [C_{0},+\infty)$, hence we can connect $x$ to $-\infty$ throughout simple curve $\Gamma'\subset\mathcal{U}^-$, which must contain a compact arc $\Gamma\subset [C_{0},+\infty)$ from $x$ to
certain point in $f^{n_0}(\ro{I_0})$. Thus, $\Gamma$ must be contained in a disk bounded by the concatenation of
$f^{n_0}(\ro{I_0})$ and $f^{n_0}(\ro{b_0(I_0)})$, otherwise $\Gamma$ meets $f^n(\ro{b_0(I_0)})\subset \mathcal{C}$.

\smallskip

Therefore we get that $x\in f^{n_0}(L_{0})$, and we have

$$\mathcal{U}^-=(-\infty,C_{0})\cup \bigcup_{n\in\N} f^n(L_{0}\cup\ro{\tilde{I}}_0).$$

\end{proof}

We conclude that the non-wandering set of $f$ is contained in $\mathcal{C}$, so $\mathcal{C}$ must be a global attractor for $f$, and we are done with the proof of Theorem \ref{maincircloidlowent} (and consequently of Theorem B).

\subsection{Proof of Theorem D}

We here perform some modifications to the construction developed above to obtain a proof of Theorem D. In the construction of $f_2$, it is not hard to construct another pair of
saddle periodic points inside $(C_0,C_1)$, so that they are homoclinically related and have different rotation numbers which are as close as desired to $0$ and $1$ respectively.
This can be achieved using Theorem \ref{connecting2}. See Figure \ref{ultimafigura}.

\small

\begin{figure}[ht]\begin{center}
 \psfrag{C0}{$C_0$}\psfrag{x0}{$x_{0}$}\psfrag{p0}{$p_0$}\psfrag{xdelta}{$x_{1/4}$}\psfrag{x1menosdelta}{$x_{3/4}$}
 \psfrag{C1}{$C_{1}$}\psfrag{x1}{$x_1$}\psfrag{p1}{$p_1$}
 \psfrag{inf}{$+\infty$}\psfrag{H}{$H$}\psfrag{q}{$q$}\psfrag{q1}{$q'$}

\centerline{\includegraphics[height=9cm]{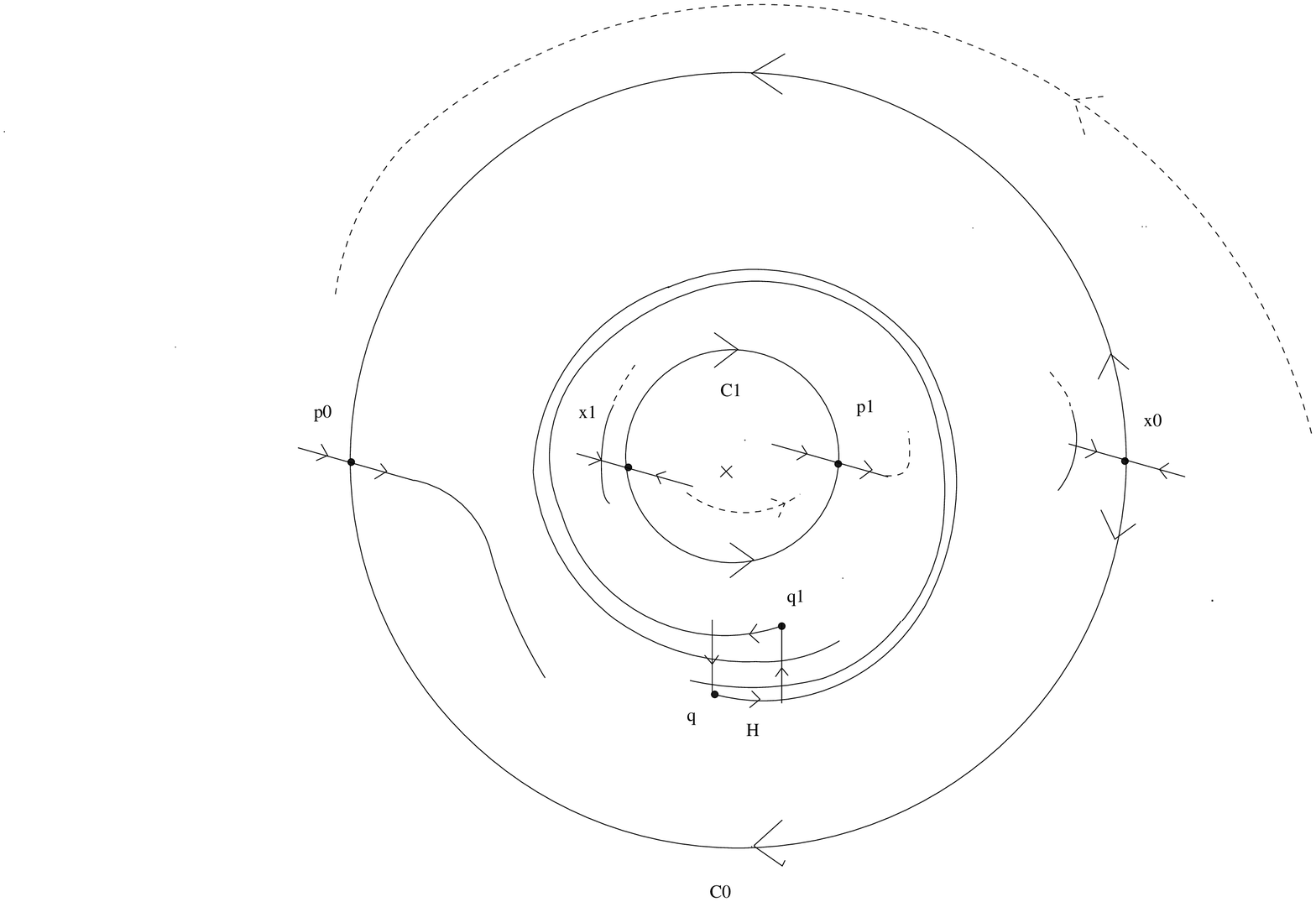}}
\caption{The map $f_2$ for the examples in Theorem D. We consider a homoclinic class $H=H(q,q')$ contained in $(C_0,C_1)$ with a rotation set arbitrary close to $[0,1]$.}\label{ultimafigura}
\end{center}\end{figure}

\normalsize

Then, \ve{for an arbitrary small} $\delta>0$ we can choose $f_2$ so that there is a homoclinic class \footnote{In our context the homoclinic class is the minimal $f$-invariant set containing the closure of the transversal heteroclinic intersections associated to the periodic points $q$ and $q'$.} \ve{$H=H(q,q')\subset(C_0,C_1)$ with $(\delta,1-\delta)\subset\rho_H(F_2)\subset [0,1]$ for a lift $F_2$ of $f_2$}. Notice that $f_2$ verifies $f_2(\mathbb{S}^1\times [-1,2]) \subset \mathbb{S}^1 \times (-1,2)$ and we can assume that the determinant of the derivative of $f_2$ is everywhere smaller than $1-\delta$ outside $\mathbb{S}^1\times [-1,2]$.

Now, instead of pushing the unstable manifolds of $x_0$ and $x_1$ we will consider smooth diffeomorphisms $h_n$ which coincide with the identity outside the region $(C_{-n}, C_{n+1})$, and having the form $h_n(x,y) = (x, \hat h_n(y))$ and $\hat h_n: \R \to \R$ is a function such that:

\begin{itemize}
\item $\hat h_n'(y) \in (1-1/n, 1+1/n)$ for every $y\in \R$ and $\hat h_n'(y) < 1-1/2n$ if $y \in [-1,2]$
\item  the $C^1$-distance between $h_n$ and the identity tends to $0$ as $n\to \infty$.
\end{itemize}

We will consider the perturbations $g_n = h_n \circ f_2$.

Since $f_2$ has the homoclinic class $H$, it follows that for large enough $n$, this class has a continuation $H_n$ which contains in its rotation set
the interval $[\delta,1-\delta]$. Moreover, for large enough $n$ there will still be a global attractor as one has $g_n ([C_{-1},C_2]) \subset (C_{-1},C_2)$,
and the dynamics is dissipative since the jacobian of $g_n$ in $[C_{-1},C_2]$ is everywhere less than $1-1/2n < 1$.
\ro{Since $f_2$ satisfies the twist condition which is open, the same holds for $g_n$ when $n$ is large. Thus $g_n$ presents Birkhoff attractors $\C_n$ for large $n\in\N$.

By the same arguments we did before, the closure of the unstable manifold $W^u(q,g_n)$ must be a circloid $\C'_n$ which is invariant for some power of $g_n$ .
As any power of $g_n$ is also a dissipative twist map, it has a unique invariant circloid, so it must be $\C'_n=\C_n$ (the same holds for $W^u(q',g_n)).$
Therefore the homoclinic class $H_n$ is contained in the Birkhoff attractor $\C_n$, so $\rho_{\C_n}(G_n)\supset [\delta,1-\delta]$ for some lift $G_n$ of $g_n$.}

On the other hand as $g_n$ can be considered in an arbitrary small $C^1$ \az{neighbourhood} of $\tau$, the entropy of $g_n$ can is arbitrary small
(Proposition \ref{rem-twist}), say smaller than $\eps/3$, and then\footnote{The iterate is just to ensure that the rotation set of a well chosen lift contains
$[0,1]$. Notice that the entropy of $g_n^3$ will be smaller than $\eps$.} choosing $g_n^3$ we obtain the proof of Theorem D.

\begin{obs}
It might be possible that the global attractor $\Lambda$ in this case is equal to the Birkhoff attractor $\C$. However, we did not find a simple argument \ve{to prove} this fact.
\end{obs}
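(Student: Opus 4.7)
The goal is to prove that $\Lambda = \C$ in the setting of Theorem D. Since $\C \subset \Lambda$ is built into the definition of the Birkhoff attractor as the unique circloid contained in the global attractor, only the reverse inclusion requires work. The plan is to exploit that the proof of Theorem D already establishes $\C = \textrm{cl}[W^u(q,g_n)] = \textrm{cl}[W^u(q',g_n)]$, so it suffices to show that every $x \in \Lambda$ lies in one of these closures.

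The first step would be to analyse the non-wandering set $\Omega(g_n|_{\mathcal{A}_0})$ of $g_n$ on the trapping annulus $\mathcal{A}_0 = \mathbb{S}^1 \times [-1,2]$. Since $g_n$ is dissipative with jacobian bounded by $1 - 1/(2n)$, the non-wandering set is contained in $\Lambda$ and has zero Lebesgue measure. If one can prove $\Omega(g_n|_{\mathcal{A}_0}) \subset \C$, then a second step, using Conley's decomposition of $\Lambda$ into chain-transitive pieces connected by heteroclinic arcs, together with a $\lambda$-lemma argument through the hyperbolic homoclinic class $H_n = H(q,q') \subset \C$, would allow one to conclude that every $x \in \Lambda$ has $\alpha$- and $\omega$-limit in $\C$, and hence lies in $\textrm{cl}[W^u(q,g_n) \cup W^u(q',g_n)] = \C$.

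The bulk of the work is to prove $\Omega(g_n|_{\mathcal{A}_0}) \subset \C$. A hypothetical non-wandering point $x \in \mathcal{A}_0 \setminus \C$ would necessarily lie in a complementary component of $\A \setminus \C$. The unbounded components $\mathcal{U}^\pm$ are ruled out quickly by the attractor structure together with the vertical contraction of $h_n$. The obstacle is to rule out recurrent dynamics in a potential bounded hole of $\C$ inside $\mathcal{A}_0$; here one would hope to adapt Birkhoff's classical invariant-curve arguments, using the twist condition together with the explicit vertical form of $h_n$, perhaps by showing that any such invariant recurrent set would have to contain an essential circle, contradicting the circloid-irreducibility of $\C$.

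The main difficulty, and presumably the reason the authors record this only as an observation, is precisely this last point: in general, the global attractor of a dissipative twist map can be strictly larger than its Birkhoff attractor, so $\Lambda \subset \C$ is not formal and must exploit structural features specific to the construction. A potential path forward is to use the near-area-preservation of $g_n$ on $[C_0, C_1]$ to argue that the wandering dynamics of $g_n$ inside $\Lambda$ is controlled by that of $f_2$, whose maximal invariant set in $[C_0, C_1]$ is the full annulus behaving as an instability region; this would reduce the question to understanding how the thin vertical contraction $h_n$ collapses this instability region onto the closure of the persistent homoclinic class $H_n$, and it is this matching between the perturbation structure and the topology of $\C$ that I expect to be the main technical obstacle.
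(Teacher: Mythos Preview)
The statement you are addressing is not a theorem but a remark: the authors explicitly say that they \emph{did not find} an argument showing $\Lambda = \C$. Consequently the paper contains no proof to compare your proposal against, and your text is not a proof of the remark but an attempt to resolve the open question it raises.

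On the mathematical content of your sketch, there is a structural error in how you split the problem. By definition a circloid $\C$ is an annular continuum, meaning $\A\setminus\C$ has \emph{no} bounded components; its complement is exactly $\mathcal{U}^+\cup\mathcal{U}^-$. Hence any point of $\Lambda\setminus\C$ already lies in one of the unbounded components $\mathcal{U}^\pm$, and the ``bounded hole'' case that you identify as the main obstacle does not exist. Conversely, the case you dismiss in one line --- ruling out non-wandering (or even wandering but $\Lambda$-trapped) points in $\mathcal{U}^\pm$ --- is the entire question. Your sentence ``ruled out quickly by the attractor structure together with the vertical contraction of $h_n$'' is exactly the step the authors could not carry out: for general dissipative twist maps one can have $\Lambda\supsetneq\C$ (this is recalled in the introduction and in the discussion preceding Theorem~C), so vertical dissipation alone is insufficient, and you give no argument specific to $g_n=h_n\circ f_2$ that would force $\Lambda\cap\mathcal{U}^\pm=\emptyset$.

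In short: there is no paper proof to match, your case split is inverted relative to the actual difficulty, and the hard step is asserted rather than argued.
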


\end{document}